\newcounter{theorem}
\def\openthm#1#2{\refstepcounter{theorem}\bigskip

{\noindent\bf#1~\thetheorem\if#2!{. }\else{ (#2).}\fi}
\it}
\def\thmskip{}
\newenvironment{theorem}[1][!]{\openthm{Theorem}{#1}}{\thmskip}
\newenvironment{lemma}[1][!]{\openthm{Lemma}{#1}}{\thmskip}
\newenvironment{corollary}[1][!]{\openthm{Corollary}{#1}}{\thmskip}
\newenvironment{proposition}[1][!]{\openthm{Proposition}{#1}}{\thmskip}
\newcounter{remark}
\def\openrem#1#2{\refstepcounter{remark}\bigskip
{\noindent \it \bfseries#1~\theremark\if#2!{. }\else{ (#2). }\fi}}
\newenvironment{remark}[1][!]{\openrem{Remark}{#1}}{\qed}
\def\R{\mathbb{R}}
\def\N{\mathbb{N}}
\def\<{\langle}
\def\>{\rangle}
\def\eps{\varepsilon}
\def\E{\mathcal{E}}
\def\Fs{\mathcal{F}}
\def\Us{\mathcal{U}}
\def\As{\mathcal{A}}
\def\Cs{\mathcal{C}}
\def\Fs{\mathcal{F}}
\def\ub{u}
\def\vb{v}
\def\yb{y}
\def\Lat{L^{{\rm a}}_F}
\def\Lqcl{L^{{\rm qcl}}_F}
\def\Lqcf{L^{{\rm qcf}}_F}
\def\Lqnl{L^{{\rm qnl}}_F}
\def\Lqnlconj{\widetilde{L}^{{\rm qnl}}_F}
\def\Lqnlconjf{\widehat{L}^{{\rm qnl}}_F}
\def\Lqceconj{\widetilde{L}^{{\rm qce}}_F}
\def\Lqceconjf{\widehat{L}^{{\rm qce}}_F}
\def\Lqcfconj{\widetilde{L}^{{\rm qcf}}_F}
\def\Lqcfconjf{\widehat{L}^{{\rm qcf}}_F}
\def\Lqce{L^{{\rm qce}}_F}
\def\Li{{L}}
\def\Eqcl{\E^{{\rm qcl}}}
\def\Eqnl{\E^{{\rm qnl}}}
\def\Eqce{\E^{{\rm qce}}}
\def\Ea{\E^{{\rm a}}}
\def\Fcrit{F_*}
\def\Fqcl{\Fs^{{\rm qcl}}}
\def\Fqcf{\Fs^{{\rm qcf}}}
\def\Fat{\Fs^{{\rm a}}}
\def\Fc{\Fs^{{\rm c}}}
\def\Wqcf{{ \widetilde W}}
\def\lqnl{\lambda^{{\rm qnl}}}
\def\yat{y^{{\rm a}}}
\def\yqcl{y^{{\rm qcl}}}
\def\yqcf{y^{{\rm qcf}}}
\def\uat{u^{{\rm a}}}
\def\uqcl{u^{{\rm qcl}}}
\def\uqcf{u^{{\rm qcf}}}
\def\uqnl{u^{{\rm qnl}}}
\def\uqce{u^{{\rm qce}}}
\def\alphaopt{\alpha^{\rm id}_{\rm opt}}
\def\alphamax{\alpha^{\rm id}_{\rm max}}
\def\alphaoptqclonetwo{\alpha^{\rm qcl,1,2}_{\rm opt}}
\def\alphamaxqclonetwo{\alpha^{\rm qcl,1,2}_{\rm max}}
\def\alphaoptqclone{\alpha^{\rm qcl,1,\infty}_{\rm opt}}
\def\alphamaxqclone{\alpha^{\rm qcl,1,{\infty}}_{\rm max}}
\def\alphaoptqcl2{\alpha^{\rm qcl,2,{\infty}}_{\rm opt}}
\def\alphamaxqcl2{\alpha^{\rm qcl,2,{\infty}}_{\rm max}}
\def\Gid{G_{\rm id}(\alpha)}
\def\Gqcl{G_{\rm qcl}(\alpha)}
\def\Gqce{G_{\rm qce}}
\def\Gqclw{\widetilde G_{\rm qcl}(\alpha)}
\def\qnl{{\rm qnl}}
\def\Lamqcf{\Lambda^{\qnl}}
\def\tilS{\tilde{S}}
\def\tilLam{\widetilde{\Lambda}^\qnl}
\DeclareMathOperator{\cond}{cond}
\newcommand{\smfrac}[2]{{\textstyle \frac{#1}{#2}}}
\newcommand{\lpnorm}[2]{\left\|#1\right\|_{\ell^{#2}_\eps}}
\def\half{\smfrac{1}{2}}
\begin{document}

\title[Linear Stationary Iterative Methods for Force-based QC]{Linear Stationary Iterative Methods for the Force-based Quasicontinuum Approximation}

\author{M. Luskin}
\address{School of Mathematics, 206 Church St. SE,
  University of Minnesota, Minneapolis, MN 55455, USA}
\email{luskin@umn.edu}

\author{C. Ortner} \address{Mathematical Institute, St. Giles' 24--29, Oxford OX1 3LB, UK} \email{ortner@maths.ox.ac.uk}

\date{\today}

\keywords{atomistic-to-continuum coupling, quasicontinuum method,
  iterative methods, stability}

\thanks{This work was supported in part by the National Science Foundation
under DMS-0757355,
 DMS-0811039, the PIRE Grant OISE-0967140, the Institute for Mathematics and
Its Applications, and
 the University of Minnesota Supercomputing Institute.
This work was also supported by the Department of Energy under
Award Number {DE-SC}0002085. CO was supported by the EPSRC grant
EP/H003096/1 ``Analysis of Atomistic-to-Continuum Coupling
Methods.'' 
}

\subjclass[2000]{65Z05,70C20}

\begin{abstract}
  Force-based multiphysics coupling methods have become
  popular since they provide a simple and efficient coupling
  mechanism, avoiding the difficulties in formulating and
  implementing a consistent coupling energy.  They are also the only
  known pointwise consistent methods for coupling a general atomistic
  model to a finite element continuum model.  However, the development
  of efficient and reliable iterative solution methods for the
  force-based approximation presents a challenge due to the
  non-symmetric and indefinite structure of the linearized force-based
  quasicontinuum approximation, as well as to its unusual stability
  properties.  In this paper, we present rigorous numerical analysis
  and computational experiments to systematically study the stability
  and convergence rate for a variety of linear stationary iterative
  methods.
  \end{abstract}

\maketitle
\section{Introduction}
Low energy local minima of crystalline atomistic systems are
characterized by highly localized defects such as vacancies, interstitials,
dislocations, cracks, and grain boundaries separated by large regions where
the atoms are slightly deformed from a lattice structure.  The goal of
atomistic-to-continuum coupling methods
~\cite{LinP:2006a,Gunzburger:2008a,arlequin1,hybrid_review,blan05,gaviniorbital,Ortner:2008a,ted03,Ortiz:1996a,cadd}
is to
approximate a fully atomistic model by maintaining the accuracy of the atomistic model in
small neighbors surrounding the localized defects and using the efficiency of continuum coarse-grained
models in the vast regions that are only mildly deformed from a lattice structure.

Force-based atomistic-to-continuum methods
decompose a computational reference lattice into an
{\it atomistic region} $\mathcal{A}$ and a {\it continuum region}
$\mathcal{C}$, and assign forces to representative atoms
according to the region
they are located in. In the quasicontinuum method, the representative atoms are all atoms in
the atomistic region and the nodes of a finite element approximation
in the continuum region. The force-based approximation is thus given
by~\cite{Dobson:2008a,dobsonluskin08,cadd,doblusort:qcf.stab,qcf.iterative,dobs-qcf2}
\begin{equation*}
\label{qcfdefFa}
\Fqcf_j(\yb):=
\begin{cases}
\Fat_j(\yb)& \text{if $j\in \mathcal{A}$},\\
\Fc_j(\yb)& \text{if $j\in \mathcal{C}$},
\end{cases}
\end{equation*}
where $\yb$ denotes the positions of the representative
atoms which are indexed by $j,$ $\Fat_j(\yb)$ denotes the atomistic
force at representative atom $j,$ and
$\Fc_j(\yb)$ denotes a continuum
force at representative atom $j.$

The force-based quasicontinuum method (QCF) uses a Cauchy-Born strain energy density
for the continuum model to achieve a
patch test consistent approximation~\cite{dobsonluskin08,Miller:2008,dobs-qcf2}.
We recall that a patch test consistent atomistic-to-continuum approximation
exactly reproduces the zero net forces of uniformly strained lattices~\cite{vankoten.blended,Miller:2008,ortner.patch}.
However, the recently
  discovered unusual stability properties of the linearized
  force-based quasicontinuum (QCF) approximation, especially its
  indefiniteness, present a challenge to the development of efficient
  and reliable iterative methods~\cite{qcf.iterative}.
Energy-based quasicontinuum approximations have many attractive
features such as more reliable solution methods, but practical
patch test consistent, energy-based quasicontinuum approximations
have yet to be developed for most
problems of physical interest, such as three-dimensional problems with
many-body interaction potentials~\cite{LuskinXingjie,eam.qc,shapeev}.

Rather than attempt an analysis of linear stationary methods
for the full nonlinear system, in this
paper we restrict our focus to the linearization of a one-dimensional
model problem about the uniform
deformation $y^F$ and consider linear stationary methods of the form
\begin{equation}
  \label{stat.method2}
  P\big(u^{(n+1)}-u^{(n)}  \big)=\alpha r^{(n)},
\end{equation}
where $P$ is a nonsingular preconditioning operator, the damping
parameter $\alpha>0$ is fixed throughout the iteration (that is,
stationary), and the residual is defined as
\begin{displaymath}
r^{(n)}:=f-\Lqcf u^{(n)}.
\end{displaymath}
We will see below that our analysis of this simple model problem
already allows us to observe many interesting and crucial features of
the various methods. For example, we can distinguish which iterative
methods converge up to the critical strain $F_*$ (see~\eqref{critical}
for a discussion of the critical strain), and we obtain first results
on their convergence rates.

We begin in Sections \ref{sec:model} and \ref{sec:qcf} by introducing
the most important quasicontinuum approximations and outlining their
stability properties, which are mostly straightforward generalizations
of results from
\cite{dobs-qcf2,doblusort:qcf.stab,doblusort:qce.stab,qcfspec}.  In
Section \ref{lin.stat}, we review the basic properties of linear
stationary iterative methods.

In Section~\ref{sec:richardson}, we give an
analysis of the Richardson Iteration ($P=I$) and prove
a contraction rate of order $1 - O(N^{-2})$ in the
$\ell^p_\eps$ norm (discrete Sobolev norms are defined in Section~\ref{sec:notation}),
where $N$ is the size of the atomistic system.

In Section~\ref{sec:precond_QCL}, we consider the iterative solution
with preconditioner $P=\Lqcl,$ where $\Lqcl$ is a standard second
order elliptic operator, and show that the preconditioned iteration
with an appropriately chosen damping parameter $\alpha$ is a
contraction up to the critical strain $F_*$ only in $\Us^{2,\infty}$
among the common discrete Sobolev spaces.  We show, however, that a
rate of contraction in $\Us^{2,\infty}$ independent of $N$ can be
achieved with the elliptic preconditioner $\Lqcl$ and an appropriate
choice of the damping parameter $\alpha.$

In Section~\ref{gfc}, we consider the popular ghost force correction
iteration (GFC) which is given by the preconditioner $P=\Lqce,$ and we
show that the GFC iteration ceases to be a contraction for any norm at
strains less than the critical strain. This result and others
presented in Section~\ref{gfc} imply that the GFC iteration might not
always reliably reproduce the stability of the atomistic
system~\cite{doblusort:qce.stab}.  We did not find that the GFC method
predicted an instability at a reduced strain in our benchmark tests
~\cite{bench11} (see also ~\cite{Miller:2008}).  To explain this, we
note that our 1D analysis in this paper can be considered a good model
for cleavage fracture, but not for the slip instabilities studied in
~\cite{bench11, Miller:2008}.  We are currently attempting to develop
a 2D benchmark test for cleavage fracture to study the stability of
the GFC method.

\section{The QC Approximations and Their Stability}
\label{sec:model}
We give a review of the
prototype QC
approximations and their stability properties in this section.
The reader can find more details in
\cite{doblusort:qcf.stab,doblusort:qce.stab}.

\subsection{Function Spaces and Norms}
\label{sec:notation}
We consider a one-dimensional atomistic chain whose $2N+1$ atoms have
the reference positions $x_j = j \eps $ for $\eps = 1/N.$ The displacement
of the boundary atoms will be constrained, so the space of admissible
displacements will be given by the
{\em displacement space}
\begin{displaymath}
  \Us = \big\{ u \in \R^{2N+1} : u_{-N} = u_N = 0 \big\}.
\end{displaymath}
We will use various norms on the space $\Us$ which are discrete
variants of the usual Sobolev norms that arise naturally in the
analysis of elliptic PDEs.

For displacements $v \in \Us$ and $1 \leq p
\leq \infty,$ we define the $\ell^p_\eps$ norms,
\begin{displaymath}
  \lpnorm{v}{p} := \begin{cases}
    \Big( \eps\sum_{\ell=-N+1}^N |v_\ell|^p \Big)^{1/p},
    &1 \leq p < \infty, \\
    \max_{\ell = -N+1,\dots,N} |v_\ell|,
    &p = \infty,
  \end{cases}
\end{displaymath}
and we denote by $\Us^{0,p}$ the space $\Us$ equipped with the
$\ell^p_\eps$ norm. The inner product associated with the
$\ell^2_\eps$ norm is
\begin{equation*}
  \<v,w\> := \eps \sum_{\ell=-N+1}^N v_\ell w_\ell \qquad
  \text{ for } v, w \in \Us.
\end{equation*}
We will also use $\|f\|_{\ell^p_\eps}$ and $\<f, g\>$ to denote the
$\ell^p_\eps$-norm and $\ell^2_\eps$-inner product for arbitrary
vectors $f, g$ which need not belong to $\Us$. In particular, we
further define the $\Us^{1,p}$ norm
\begin{displaymath}
  \|v\|_{\Us^{1,p}} := \|v'\|_{\ell^p_\eps},
\end{displaymath}
where $(v')_\ell = v_\ell'=\eps^{-1}(v_{\ell}-v_{\ell-1})$, $\ell =
-N+1, \dots, N$, and we let $\Us^{1,p}$ denote the space $\Us$
equipped with the $\Us^{1,p}$ norm.  Similarly, we define the space
$\Us^{2,p}$ and its associated $\Us^{2,p}$ norm, based on the centered
second difference $v_\ell'' = \eps^{-2}(v_{\ell+1} - 2 v_\ell +
v_{\ell-1})$ for $\ell=-N+1,\dots,N-1.$

We have that $v'\in\R^{2N}$ for $v\in\Us$ has mean zero
$\sum_{j=-N+1}^{N} v'_j=0.$  We can thus obtain from
\cite[Equation 9]{doblusort:qcf.stab} that
\begin{equation}\label{equiva}
\max_{\substack{ v\in\Us \\ \|v'\|_{\ell^q_\eps = 1} }} \\ \<u',\, v'\>
\le \max_{\substack{\sigma\in\R^{2N}\\ \|\sigma\|_{\ell^q_\eps} = 1}} \<u',\, \sigma\> = \|u\|_{\Us^{1,p}}
\le 2\max_{\substack{ v\in\Us \\ \|v'\|_{\ell^q_\eps = 1} }} \\ \<u',\, v'\>.
\end{equation}

We denote the space of linear functionals on $\Us$ by $\Us^*.$
For $g \in \Us^*$, $s = 0,1,$ and $1 \leq p \leq \infty$, we define
the negative norms $\| g \|_{\Us^{-s,p}}$ by
\begin{equation*}
  \|g\|_{\Us^{-s,p}}
  := \sup_{\substack{v \in \Us\\\|v\|_{\Us^{s,q}} = 1}} \< g, v \>,
\end{equation*}
where $1 \leq q \leq \infty$ satisfies $\frac{1}{p} + \frac{1}{q} =
1.$ We let $\Us^{-s,p}$ denote the dual space $\Us^*$ equipped with
the $\Us^{-s,p}$ norm.

For a linear mapping $A: \Us_1 \to \Us_2$ where $\Us_i$ are vector
spaces equipped with the norms $\|\cdot \|_{\Us_i},$ we denote the
operator norm of $A$ by
\[
\|A\|_{L(\Us_1,\ \Us_2)}:=\sup_{v\in\Us,\,v\ne 0}\frac {\|Av\|_{\Us_2}}{\|v\|_{\Us_1}}.
\]
If $\Us_1=\Us_2$, then we use the more concise notation
\[
\|A\|_{\Us_1} :=\|A\|_{L(\Us_1,\ \Us_1)}.
\]

If $A: \Us^{0,2} \to \Us^{0,2}$ is invertible, then we can define the
{\em condition number} by
\[
\cond(A)=\|A\|_{\Us^{0,2}}\cdot \|A^{-1}\|_{\Us^{0,2}}.
\]
When $A$ is symmetric and positive definite, we have that
\[
\cond(A)=\lambda_{2N-1}^A/\lambda_1^A
\]
where the eigenvalues of $A$ are
$
0<\lambda^A_1\le \dots \le \lambda_{2N-1}^A.
$
If a linear mapping $A: \Us \to \Us$ is symmetric and positive
definite, then we define the $A$-inner product and $A$-norm by
\[
\<v,w\>_A:=\<Av,w\>,\qquad \|v\|_A^2=\<Av,v\>.
\]

The operator $A : \Us_1 \to \Us_2$ is \emph{operator stable} if the operator norm $\|A^{-1}\|_{L(\Us_2,\ \Us_1)}$ is
finite, and a sequence of operators $A_j : \Us_{1,j}
\to \Us_{2,j}$ is \emph{operator stable} if the sequence
$\|(A_j)^{-1}\|_{L(\Us_{2,j},\ \Us_{1,j})}$ is uniformly bounded.  A
symmetric operator $A: \Us^{0,2} \to \Us^{0,2}$ is called \emph{stable} if it is positive
definite, and this implies operator stability.  A sequence of positive
definite, symmetric operators $A_j: \Us^{0,2} \to \Us^{0,2}$ is called {\em {stable}}
if their smallest eigenvalues $\lambda_{1}^{A_j}$ are uniformly bounded away from zero.

\subsection{The atomistic model}
\label{sec:model_at}
We now consider a one-dimensional atomistic chain whose $2N+3$ atoms have
the reference positions $x_j = j\eps $ for $\eps = 1/N,$ and interact
only with their nearest and next-nearest neighbors.

We denote the deformed positions by $y_j$,
$j=-N-1,\dots,N+1;$ and we constrain the boundary atoms and their
next-nearest neighbors to match the uniformly deformed state, $y_j^F =
Fj\eps ,$ where $F>0$ is a macroscopic strain,
that is,
\begin{equation}\label{bc}
\begin{aligned}
  y_{-N-1}&=-F(N+1)\eps,\qquad &y_{-N}&=-F N \eps,\\
y_{N}&=FN\eps,\qquad &y_{N+1}&=F(N+1)\eps.
\end{aligned}
\end{equation}
We introduced the two additional atoms with indices $\pm (N+1)$ so
  that $y = y^F$ is an equilibrium of the
  atomistic model.
The total energy
of a deformation ${\yb} \in \mathbb{R}^{2N+3}$ is now given by
\begin{equation*}
  \Ea(\yb) -\sum_{j=-N}^{N}\eps f_j y_j,
\end{equation*}
where
\begin{equation}\label{atomaa}
\begin{split}
\Ea(\yb)
\sum_{j=-N}^{N+1} \eps \phi\Big(\frac{y_{j} - y_{j-1}}{\eps} \Big)
&=\sum_{j=-N}^{N+1} \eps \phi(y_j')
+ \sum_{j = -N+1}^{N+1} \eps \phi(y_j'+y_{j-1}').
\end{split}
\end{equation}
Here, $\phi$ is a scaled two-body interatomic potential (for example,
the normalized Lennard-Jones potential, $\phi(r) = r^{-12}-2 r^{-6}$),
and $f_j$, $j = -N, \dots, N,$ are external forces. The equilibrium equations are given by the force balance
conditions at the unconstrained atoms,
\begin{equation}
\label{full}
\begin{aligned}
-\Fat_j(\yat) &= f_j&\quad&\text{for} \quad j = -N+1, \dots, N-1,\\
\yat_{j}&=Fj\eps&\quad&\text{for} \quad j = -N-1,\,-N,\,N,\,N+1,
\end{aligned}
\end{equation}
where the atomistic force (per lattice spacing $\eps$) is given by
\begin{equation} \label{atomforce}
\begin{split}
\Fat_j(y)
:&=-\frac{1}{\eps}\frac{\partial \E^a(\yb)}{\partial y_j} \\
&=\frac{1}{\eps}\Big\{\left[\phi'(y_{j+1}')
+ \phi'(y_{j+2}'+y_{j+1}')\right]
-\left[\phi'(y_j')
+ \phi'(y_j'+y_{j-1}')\right]\Big\}.
\end{split}
\end{equation}

%
We linearize \eqref{atomforce} by letting $u \in \R^{2N+3}$, $u_{\pm N} =
u_{\pm (N+1)} = 0$, be a ``small'' displacement from the uniformly
deformed state $y_j^F = Fj\eps;$ that is, we define
\begin{align*}
  & u_j = y_j - y_j^F  \quad \text{ for } j = -N-1,\dots,N+1.
\end{align*}
We then linearize the atomistic equilibrium equations \eqref{full}
about the uniformly deformed state $\yb^F$ and obtain a linear system for the
displacement $\ub^a$,
\begin{equation*}
\begin{aligned}
(\Lat \uat)_j&=f_j&\quad&\text{for} \quad j = -N+1, \dots, N-1,\\
u^{a}_{j}&=0&\quad&\text{for} \quad j = -N-1,\,-N,\,N,\,N+1,
\end{aligned}
\end{equation*}
where $(\Lat\vb)_j$ is
given by
\begin{equation*}
(\Lat \vb)_j:=\phi''_F
      \left[\frac{-v_{j+1} + 2 v_{j} - v_{j-1}}{\eps^2} \right]
      + \phi''_{2F}
      \left[\frac{-v_{j+2} + 2 v_{j} - v_{j-2}}{\eps^2} \right].
\end{equation*}
Here and throughout we define
\[
\phi''_{F} := \phi''(F)\quad\text{and}\quad
\phi''_{2F} := \phi''(2F),
\]
where $\phi$ is the interatomic potential in~\eqref{atomaa}.  We will
always assume that $\phi''_F > 0$ and $\phi''_{2F} < 0,$ which holds
for typical pair potentials such as the Lennard-Jones potential under
physically realistic deformations.

%
The stability properties of $\Lat$ can be understood by using a
representation derived in \cite{doblusort:qce.stab},
\begin{equation}
  \label{eq:Eq_second_diff_form}
  \begin{split}
    \< \Lat u, u \>
    &= \eps A_F \sum_{\ell = -N+1}^N |u_\ell'|^2
    - \eps^3 \phi_{2F}'' \sum_{\ell = -N}^{N} |u_\ell''|^2
    = A_F \|u'\|_{\ell^2_\eps}^2 - \eps^2 \phi_{2F}'' \|u''\|_{\ell^2_\eps}^2, \\
  \end{split}
\end{equation}
where $A_F$ is the {\em continuum elastic modulus}
\[
A_F=\phi_{F}'' + 4 \phi_{2F}''.
\]
We can obtain the following result from the
argument in \cite[Prop. 1]{doblusort:qce.stab} and \cite{qcf.iterative}.

\begin{proposition}
  If $\phi_{2F}'' \leq 0$, then
  \begin{displaymath}
    \min_{ \substack{u \in \R^{2N+3} \setminus\{0\} \\ u_{\pm N} = u_{\pm (N+1)} = 0 } }
    \frac{\< \Lat u, u \>}{\|u'\|_{\ell^2_\eps}^2} = A_F - \eps^2 \nu_{\eps} \phi_{2F}'',
  \end{displaymath}
  where
\[\nu_{\eps}:=\min_{\substack{u \in \R^{2N+3} \setminus \{0\} \\ u_{\pm N} = u_{\pm (N+1)} = 0}}
    \frac{  \|u''\|_{\ell^2_\eps}^2}{ \|u'\|_{\ell^2_\eps}^2 }.
  \]
satisfies $0 < \nu_{\eps} \leq C$ for some universal constant
  $C.$
\end{proposition}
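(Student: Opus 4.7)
The plan is to read off the identity directly from the representation \eqref{eq:Eq_second_diff_form}. Dividing that formula by $\|u'\|_{\ell^2_\eps}^2$ yields
\[
\frac{\< \Lat u, u \>}{\|u'\|_{\ell^2_\eps}^2}
= A_F + (-\eps^2 \phi_{2F}'')\, \frac{\|u''\|_{\ell^2_\eps}^2}{\|u'\|_{\ell^2_\eps}^2}.
\]
Because the hypothesis $\phi_{2F}'' \le 0$ makes the coefficient $-\eps^2 \phi_{2F}''$ non-negative, minimizing the left-hand side over the admissible $u$ is equivalent to minimizing the ratio $\|u''\|_{\ell^2_\eps}^2/\|u'\|_{\ell^2_\eps}^2$, which is precisely the definition of $\nu_\eps$. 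This gives the stated formula.

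Next I would verify that $\nu_\eps > 0$. The admissible set is finite-dimensional, so the ratio attains its infimum at some $v \ne 0$. If $\nu_\eps = 0$, then $v''_\ell = 0$ for $\ell = -N,\dots,N$. Using the identity $v''_\ell = \eps^{-1}(v'_{\ell+1} - v'_\ell)$, this means the sequence $(v'_j)$ is constant in $j$. Telescoping against the boundary conditions $v_{-N-1} = v_{N+1} = 0$ gives $\eps\sum_{j=-N}^{N+1} v'_j = v_{N+1} - v_{-N-1} = 0$, so that constant must be zero. Then $v$ itself is constant, and $v_{\pm N} = 0$ forces $v \equiv 0$, contradicting the normalization $\|v'\|_{\ell^2_\eps} = 1$.

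For the uniform upper bound $\nu_\eps \le C$ I would use a fixed smooth test function. Pick once and for all some $\psi \in C^2_c((-1,1))$ with $\psi' \not\equiv 0$, say supported in $[-1+\delta, 1-\delta]$ for some $\delta > 0$. Define $u_j := \psi(j\eps)$ for $|j| \le N+1$. Whenever $\eps \le \delta$, the support condition gives $u_{\pm N} = u_{\pm(N+1)} = 0$, so $u$ is admissible. Standard consistency of the forward/second difference operators together with Riemann-sum convergence yields
\[
\|u'\|_{\ell^2_\eps}^2 \longrightarrow \|\psi'\|_{L^2}^2 > 0
\qquad\text{and}\qquad
\|u''\|_{\ell^2_\eps}^2 \longrightarrow \|\psi''\|_{L^2}^2
\]
as $\eps \to 0$, hence $\nu_\eps \le \|u''\|_{\ell^2_\eps}^2/\|u'\|_{\ell^2_\eps}^2 \to \|\psi''\|_{L^2}^2/\|\psi'\|_{L^2}^2 =: C_\infty$. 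For the finitely many remaining values $\eps > \delta$, each $\nu_\eps$ is a finite positive number by the previous paragraph, so the overall bound $\nu_\eps \le C$ follows by taking the maximum of these finitely many values and, say, $2C_\infty$.

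The main delicate point is the uniformity in $N$ of the upper bound; once one chooses a test function whose support stays compact strictly inside $(-1,1)$, the admissibility is automatic for large $N$ and the consistency of finite differences furnishes a uniform ratio, so the argument reduces to a routine Riemann-sum estimate.
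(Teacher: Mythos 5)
Your proof is correct and follows the natural route: read the claimed identity directly off the decomposition \eqref{eq:Eq_second_diff_form}, observe that positivity of the coefficient $-\eps^2\phi_{2F}''$ reduces the minimization to that of the Rayleigh quotient $\|u''\|_{\ell^2_\eps}^2/\|u'\|_{\ell^2_\eps}^2$, rule out $\nu_\eps=0$ via telescoping against the double boundary conditions, and obtain the $N$-independent upper bound from a fixed compactly supported smooth test function. This is essentially the argument behind the cited references (the paper itself only points to \cite[Prop.~1]{doblusort:qce.stab} and \cite{qcf.iterative} rather than reproving it), so your proposal matches the intended reasoning.

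One small point worth tightening: after fixing $\psi\in C^2_c((-1,1))$ with support in $[-1+\delta,1-\delta]$, the constraint $u_{\pm N}=u_{\pm(N+1)}=0$ holds for \emph{every} $\eps$, not only $\eps\le\delta$, since $|{\pm}N\eps|=1$ and $|{\pm}(N+1)\eps|>1$ always lie outside the support; what the smallness of $\eps$ actually buys is that some gridpoint lands inside the support so that $\|u'\|_{\ell^2_\eps}>0$. Also note the slightly cleaner alternative for $\nu_\eps>0$: since $v_{-N-1}=v_{-N}=0$ gives $v'_{-N}=0$ directly, the constant value of $v'$ is zero without needing the full telescoping sum. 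Neither of these affects correctness.
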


\subsubsection{The critical strain $\Fcrit$} The previous result shows
that $\Lat$ is positive definite, uniformly as $N \to \infty$, if and
only if $A_F > 0$. For realistic interaction potentials, $\Lat$ is
positive definite in a ground state $F_0 > 0$. For simplicity, we
assume that $F_0 = 1$, and we ask how far the system can be
``stretched'' by applying increasing macroscopic strains $F$ until it
loses its stability. In the limit as $N \to \infty$, this happens at
the {\em critical strain} $\Fcrit$, which is the smallest number
larger than $F_0$, solving the equation
\begin{equation}\label{critical}
  A_{\Fcrit} = \phi''(\Fcrit) + 4 \phi''(2 \Fcrit) = 0.
\end{equation}

\subsection{The local QC approximation (QCL)}

The local quasicontinuum (QCL) approximation uses the Cauchy-Born
approximation to approximate the nonlocal atomistic model by a local
continuum model~\cite{Dobson:2008a,Miller:2003a,Ortiz:1996a}.
For next-nearest neighbor interactions, the Cauchy-Born approximation reads
\begin{displaymath}
  \phi\left(\eps^{-1}(y_{\ell+1}-y_{\ell-1})\right) \approx
  \smfrac12 \big[ \phi(2y_\ell') + \phi(2y_{\ell+1}')],
\end{displaymath}
and results in the QCL energy, for $y\in\R^{2N+3}$ satisfying the
boundary conditions~\eqref{bc},
\begin{equation}\label{lqcen}
\begin{split}
\Eqcl(\yb)&= \sum_{j = -N+1}^N \eps \left[\phi(y_j')
+ \phi(2y_j') \right] \\
&\qquad+\eps \left[\phi(y'_{-N})+\frac 12 \phi(2y'_{-N})+\phi(y'_{N+1})
+\frac 12 \phi(2y'_{N+1})\right]
.
\end{split}
\end{equation}
Imposing the artificial boundary conditions of zero displacement from
the uniformly deformed state, $y_j^F = Fj\eps,$ we obtain the QCL
equilibrium equations
\begin{equation*}
\begin{aligned}
-\Fqcl_j(\yqcl) &= f_j&\quad&\text{for} \quad j = -N+1, \dots, N-1,\\
\yqcl_{j}&=Fj\eps& \quad&\text{for} \quad j = -N,\,N,
\end{aligned}
\end{equation*}
where
\begin{equation} \label{lqcforce}
  \begin{split}
    \Fqcl_{j}(\yb):&=-\frac{1}{\eps} \frac{\partial \Eqcl(\yb)}{\partial y_j}\\
    &=\frac{1}{\eps}\Big\{\left[\phi'(y_{j+1}')
      + 2\phi'(2y_{j+1}')\right]
    -\left[\phi'(y_j')
      + 2\phi'(2y_j')\right]\Big\}.
\end{split}
\end{equation}
We see from \eqref{lqcforce} that the QCL equilibrium equations are
well-defined with only a single constraint at each boundary, and we
can restrict our consideration to \linebreak $y\in\R^{2N+1}$ with
$y_{-N}=-F$ and $y_N=F$ as the boundary conditions.

Linearizing the QCL equilibrium equations \eqref{lqcforce} about $y^F$ results in the system
\begin{equation*}
\begin{aligned}
(\Lqcl\uqcl)_j&=f_j&\quad&\text{for} \quad j = -N+1, \dots, N-1,\\
\uqcl_{j}&=0&\quad&\text{for} \quad j = -N,\,N,
\end{aligned}
\end{equation*}
where
\begin{equation*}
\Lqcl=A_F\Li
\end{equation*}
and
$\Li$ is the discrete Laplacian, for $v\in\Us$, given by
\begin{equation}
  \label{eq:defn_Li}
(\Li \vb)_j:=-v_j''=
	\left[\frac{-v_{j+1} + 2 v_{j} - v_{j-1}}{\eps^2} \right], \quad j=-N+1,\dots, N-1.
\end{equation}
The QCL operator is a scaled discrete Laplace operator,
so
\begin{displaymath}
  \< \Lqcl u, u \> = A_F \|u'\|_{\ell^2_\eps}^2 \qquad
  \text{for all } u \in \Us.
\end{displaymath}
In particular, it follows that $\Lqcl$ is stable if and only if $A_F >
0$, that is, if and only if $F < F_*,$ where $F_*$ is the critical
strain defined in~\eqref{critical}.

\subsection{The force-based QC approximation (QCF)}

The force-based quasicontinuum
(QCF) method combines the accuracy of the atomistic model with the
efficiency of the QCL approximation by decomposing the computational reference lattice into an
{\it atomistic region} $\mathcal{A}$ and a {\it continuum region}
$\mathcal{C}$, and assigns forces to atoms according to the region
they are located in. The QCF operator is given
by~\cite{Dobson:2008a,dobsonluskin08}
\begin{equation}
\label{qcfdefF}
\Fqcf_j(\yb):=
\begin{cases}
\Fat_j(\yb)& \text{if $j\in \mathcal{A}$},\\
\Fqcl_j(\yb)& \text{if $j\in \mathcal{C}$},
\end{cases}
\end{equation}
and the QCF equilibrium equations are given by
\begin{equation*}
\begin{aligned}
-\Fqcf_j(\yqcf) &= f_j&\quad&\text{for} \quad j = -N+1, \dots, N-1,\\
\yqcf_{j}&=Fj\eps& \quad&\text{for} \quad j = -N,\,N.
\end{aligned}
\end{equation*}
We note that, since atoms near the boundary belong to
$\mathcal{C}$, only one boundary condition is required at each end.

For simplicity, we specify the atomistic and continuum regions as
follows. We fix $K \in \N$, $1 \leq K \leq N-2$, and define
\begin{displaymath}
  \mathcal{A} = \{-K,\dots,K\} \quad \text{and} \quad
  \mathcal{C} = \{-N+1, \dots, N-1\} \setminus \mathcal{A}.
\end{displaymath}
Linearizing~\eqref{qcfdefF} about $\yb^F$, we obtain
\begin{equation}
  \label{qcf}
  \begin{aligned}
    (\Lqcf \uqcf)_j&=f_j&\quad&\text{for} \quad j = -N+1, \dots, N-1,\\
    \uqcf_{j}&=0&\quad&\text{for} \quad j = -N,\,N,
  \end{aligned}
\end{equation}
where the linearized force-based operator is given explicitly by
\begin{displaymath}
  (\Lqcf v)_j := \left\{
    \begin{array}{ll}
      (\Lqcl v)_j, & \quad\text{for } j \in \Cs, \\
      (\Lat v)_j, & \quad \text{for } j \in \As.
  \end{array} \right.
\end{displaymath}
The stability analysis of the QCF operator $\Lqcf$ is less
straightforward \cite{doblusort:qcf.stab,dobs-qcf2}; we will therefore
treat it separately and postpone it to Section \ref{sec:qcf}.

\subsection{The original energy-based QC approximation (QCE)}
The original energy-based quasicontinuum (QCE) method
\cite{Ortiz:1996a} defines an energy functional by assigning
atomistic energy contributions in the atomistic region and continuum
energy contributions in the continuum region. For our
model problem, we obtain
\begin{displaymath}
  \Eqce(y) = \eps \sum_{\ell \in \As} \E_\ell^a(y)
  + \eps \sum_{\ell \in \Cs} \E_\ell^c(y) \quad \text{for } y \in \R^{2N+1},
\end{displaymath}
where
\begin{align*}
  \E_\ell^c(y) =~& \smfrac12 \big( \phi(2y_\ell') + \phi(y_\ell') + \phi(y_{\ell+1}') + \phi(2y_{\ell+1}') \big), \quad \text{and} \\
  \E_\ell^a(y) =~& \smfrac12 \big( \phi(y_{\ell-1}' + y_\ell') + \phi(y_\ell') + \phi(y_{\ell+1}') + \phi(y_{\ell+1}' + y_{\ell+2}') \big).
\end{align*}

The QCE method is patch tests
inconsistent~\cite{mingyang,Dobson:2008b,Dobson:2008c,Shenoy:1999a},
which can be seen from the existence of ``ghost forces'' at the
interface, that is, $\nabla \Eqce(y^F) = g^F \neq 0$. Hence, the
linearization of the QCE equilibrium equations about $y^F$ takes the
form~(see \cite[Section 2.4]{Dobson:2008b} and \cite[Section
2.4]{Dobson:2008c} for more detail)
\begin{equation}
  \label{qce}
  \begin{aligned}
    (\Lqce\uqce)_j-g_j^F&=f_j&\quad&\text{for} \quad j = -N+1, \dots, N-1,\\
    \uqce_{j}&=0&\quad&\text{for} \quad j = -N,\,N,
  \end{aligned}
\end{equation}
where, for $0 \leq j \leq N-1,$ we have
\begin{equation*}
\begin{split}
(\Lqce \vb)_j &= \phi''_F \frac{-v_{j+1} +2 v_j - v_{j-1}}{\eps^2} \\
& \hspace{-8mm} + \phi_{2F}'' \left\{ \begin{array}{lr}
\displaystyle
 4 \frac{-v_{j+2} +2 v_j - v_{j-2}}{4 \eps^2},
& \hspace{-8mm} 0 \leq j \leq K-2, \\[6pt]
\displaystyle
 4 \frac{-v_{j+2} +2 v_j - v_{j-2}}{4 \eps^2}
+ \frac{1}{\eps} \frac{v_{j+2} - v_{j}}{2 \eps},  & j = K-1, \\[6pt]
\displaystyle
 4 \frac{-v_{j+2} +2 v_j - v_{j-2}}{4 \eps^2}
- \frac{2}{\eps} \frac{v_{j+1} - v_{j}}{\eps}
+ \frac{1}{\eps} \frac{v_{j+2} - v_{j}}{2 \eps},
& j = K, \\[6pt]
\displaystyle
4 \frac{-v_{j+1} +2 v_j - v_{j-1}}{\eps^2}
-  \frac{2}{\eps} \frac{v_{j} - v_{j-1}}{\eps}
+ \frac{1}{\eps} \frac{v_{j} - v_{j-2}}{2 \eps}, & \quad j = K+1, \\[6pt]
\displaystyle
4 \frac{-v_{j+1} +2 v_j - v_{j-1}}{\eps^2}
+ \frac{1}{\eps} \frac{v_{j} - v_{j-2}}{2 \eps}, & j = K+2, \\[6pt]
\displaystyle
4 \frac{-v_{j+1} +2 v_j - v_{j-1}}{\eps^2}, &
\hspace{-16mm} K+3 \leq j \leq N-1,
\end{array}\right.
\end{split}
\end{equation*}
and where the vector of ``ghost forces,'' $g$, is defined by
\begin{equation*}
g_j^F = \begin{cases}
0, & 0 \leq j \leq K-2, \\
-\frac{1}{2\eps} \phi'_{2F}, & j = K-1, \\
\hphantom{-} \frac{1}{2\eps} \phi'_{2F}, & j = K, \\
\hphantom{-} \frac{1}{2\eps} \phi'_{2F}, & j = K+1, \\
-\frac{1}{2\eps} \phi'_{2F}, & j = K+2, \\
0, &  K+3 \leq j \leq N-1.\\
\end{cases}
\end{equation*}
The equations
for $j=-N+1,\dots,-1$ follow from symmetry.

The following result is a new sharp stability estimate for the QCE
operator $\Lqce$. Its somewhat technical proof is given in Appendix
\ref{sec:app_gfc}.

\begin{theorem}
  \label{th:stab_qce}
  If $K \geq 1$, $N \geq K+2$, and $\phi_{2F}'' \leq 0$, then
  \begin{displaymath}
    \inf_{\substack{u \in \Us \\ \|u'\|_{\ell^2_\eps} = 1}} \< \Lqce u, u\>
    = A_F + \lambda_K \phi_{2F}'',
  \end{displaymath}
  where $\smfrac12 \leq \lambda_K \leq 1$. Asymptotically, as $K \to
  \infty$, we have
  \begin{displaymath}
    \lambda_K \sim \lambda_* + O(e^{-cK}) \quad
    \text{where } \lambda_* \approx  0.6595
    \text{ and } c \approx 1.5826.
  \end{displaymath}
\end{theorem}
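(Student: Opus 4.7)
The plan is to recast the infimum as a finite-dimensional eigenvalue problem by deriving an explicit quadratic-form identity for $\Lqce$, then analyze this finite problem with explicit bounds for the lower/upper estimates and a generating-function ansatz for the asymptotics.

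\textbf{Step 1 (Quadratic form representation).} First I would establish an identity of the shape
\begin{equation*}
  \< \Lqce u, u\> \;=\; A_F \, \|u'\|_{\ell^2_\eps}^2 \;+\; \phi_{2F}'' \, R_K(u),
\end{equation*}
where $R_K(u) \geq 0$ is a quadratic form in the strain components $u_\ell'$ supported on $|\ell| \in \{K-1,\dots,K+3\}$. To get this, I would split $\eps \sum_j (\Lqce v)_j v_j$ according to the six cases in the explicit formula for $\Lqce$ given before the theorem, apply discrete summation-by-parts within each block, and collect all $\phi_F''$ contributions (these collapse to $\phi_F''\|u'\|_{\ell^2_\eps}^2$ using telescoping over the whole chain) and all $\phi_{2F}''$ contributions. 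In the continuum interior the NNN term yields $4\phi_{2F}''\|u'\|_{\ell^2_\eps}^2$; in the atomistic interior it gives a discrete $\eps^2\phi_{2F}''\|u''\|_{\ell^2_\eps}^2$ piece as in~\eqref{eq:Eq_second_diff_form}; at the four interface rows $j\in\{K-1,K,K+1,K+2\}$ the explicit first-difference corrections combine with the interior pieces to leave precisely a localized residual $R_K$. This bookkeeping at the interface is the tedious but routine core of the step; symmetry lets me double the contribution from $j=+K$ to cover $j=-K$.

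\textbf{Step 2 (Reduction to a finite matrix problem and the two-sided bounds).} With the identity above, $\lambda_K = \sup_{u\in\Us\setminus\{0\}} R_K(u)/\|u'\|_{\ell^2_\eps}^2$. Since $R_K$ only sees finitely many strain entries near the interface and the mean-zero constraint on $u'$ can be satisfied at negligible cost in $\|u'\|_{\ell^2_\eps}$ when $N\geq K+2$, the supremum is realized (to arbitrary precision and exactly in the limit) by a finite symmetric eigenvalue problem whose size is independent of $N$. The upper bound $\lambda_K \leq 1$ I would obtain by estimating each squared difference appearing in $R_K$ via Cauchy--Schwarz against the $\ell^2_\eps$-strain norm restricted to the interface support. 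For the lower bound $\lambda_K \geq \half$ I would exhibit an explicit trial profile, concentrating $u'$ on a couple of interface sites with signs tuned to the structure of $R_K$, and compute the Rayleigh quotient by hand.

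\textbf{Step 3 (Asymptotic analysis as $K\to\infty$).} For large $K$ the two interface problems at $+K$ and $-K$ are separated by $2K$ sites and decouple up to exponentially small coupling. I would extend the half-line problem on $\{K,K+1,\dots\}$ to $\mathbb Z_+$ with the appropriate boundary datum extracted from $R_K$, and solve the resulting semi-infinite Toeplitz eigenvalue problem by the ansatz $\sigma_\ell = z^{\ell-K}$ with $|z|<1$. Substituting into the recursion yields a polynomial characteristic equation in $z$ whose admissible root satisfies $|z_*| = e^{-c}$; matching the boundary condition then produces a secular equation whose smallest positive solution is $\lambda_*$. The correction $O(e^{-cK})$ for finite $K$ arises from the residual coupling between the two half-line problems across the atomistic region, and its size is controlled by $|z_*|^{2K}$.

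\textbf{Main obstacle.} The delicate part is Step 3: the characteristic polynomial does not factor in closed form, so the constants $\lambda_* \approx 0.6595$ and $c\approx 1.5826$ must be identified as roots of an explicit but irreducible low-degree polynomial, and I need a clean perturbation argument to quantify the crosstalk between the two interfaces and certify that it decays with the sharp rate $c$ rather than a strictly smaller exponent. I also anticipate care will be needed to justify exchanging the true finite-$N$, finite-$K$ Rayleigh quotient for the idealized semi-infinite one without losing the sharp constant $\half$ at $K=1$.
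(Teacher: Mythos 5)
Your overall strategy matches the paper's: express $\langle \Lqce u, u\rangle$ as a quadratic form in the strains $u'$, reduce to a symmetric eigenvalue problem whose nontrivial part is independent of $N$, certify the lower bound $\lambda_K \geq \smfrac12$ by an explicit test strain, and obtain the asymptotics from a characteristic-polynomial ansatz. However, there are two substantive errors in the details.

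\textbf{Step 1 is wrong as stated.} The residual $R_K$ is \emph{not} supported on $|\ell|\in\{K-1,\dots,K+3\}$. The paper's decomposition (drawn from Formula (23) of \cite{doblusort:qce.stab}) contains a term $-\eps^2\phi_{2F}''\sum_{\ell=-K+2}^{K-1}\eps|u_\ell''|^2$ running over the \emph{entire} atomistic interior, in addition to the interface corrections. When you pass to the strain Gram matrix $\mathcal{H}=\phi_F''\,{\rm Id}+\phi_{2F}''\mathcal{H}_2$, the nontrivial block of $\mathcal{H}_2$ has size $2K+4$, not $O(1)$. This is still $N$-independent so your Step 2 conclusion survives, but the reasoning that justifies it ("$R_K$ only sees finitely many strain entries near the interface") is false, and if you tried to compute $\lambda_K$ directly from a localized $5\times 5$ block you would get the wrong answer. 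The reduction to a $K$-dependent finite block hinges instead on the fact that $\mathcal{H}_2 e_k = 4 e_k$ for $|k|\geq K+3$, so the extremal eigenvector is orthogonal to those coordinates, i.e.\ supported on $\{-K-1,\dots,K+2\}$.

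\textbf{Step 3 has the half-line oriented in the wrong direction.} The extremal eigenvector of $\bar{\mathcal H}_2$ satisfies $\psi_{\ell-1}+2\psi_\ell+\psi_{\ell+1}=\lambda\psi_\ell$ in the atomistic interior, giving $\psi_\ell = a z^\ell + b z^{-\ell}$ with $z>1$ since $\bar\lambda>4$. Thus the eigenvector is concentrated at the two interfaces $\ell\approx\pm K$ and decays \emph{into the atomistic region} (toward $\ell=0$); it is \emph{cut off} on the continuum side ($\psi_k=0$ for $|k|\geq K+3$), not exponentially decaying there. Your ansatz $\sigma_\ell = z^{\ell-K}$ with $|z|<1$ on $\{K,K+1,\dots\}$ places the decay in the continuum, which is the wrong geometry; the correct picture is a surface mode localized at each interface, decaying inward, with crosstalk between the two modes of size $|z|^{-2K}$, which is what produces the $O(e^{-cK})$ correction with $c=2\log z_*\approx 1.5826$. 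The characteristic-polynomial mechanism you describe is right, but you need to impose the interface boundary rows ($9/2$, $3$, $3/2$ entries of $\bar{\mathcal H}_2$) on the \emph{atomistic-side} decaying solution to arrive at the paper's quintic $4z^5-12z^4+9z^3-3z^2-4z+2=0$. It is also worth noting that the antisymmetry of the extremal eigenvector (which you will want to invoke to decouple the two interfaces) is precisely what makes the mean-zero constraint on $u'$ hold automatically, sharpening your heuristic that "the constraint can be satisfied at negligible cost."

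Finally, the upper bound $\lambda_K\leq 1$ needs no Cauchy--Schwarz: once you have the correct decomposition, $\langle\Lqce u,u\rangle - (A_F+\phi_{2F}'')\|u'\|^2_{\ell^2_\eps}$ is visibly a sum of nonnegative terms when $\phi_{2F}''\leq 0$.
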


\subsection{The quasi-nonlocal QC approximation (QNL)}
\label{sec:model:qnl}
The QCF method is the simplest idea to circumvent the interface
inconsistency of the QCE method, but gives non-conservative
equilibrium equations~\cite{Dobson:2008a}. An alternative
energy-based approach was suggested
in \cite{Shimokawa:2004,E:2006}, which is based on a modification of
the energy at the interface. The quasi-nonlocal approximation (QNL) is given by the energy functional
\begin{equation*}
  \begin{split}
    \Eqnl(y) := \eps \sum_{\ell = -N+1}^N  \phi(y_\ell')
    + \eps \sum_{\ell \in \As} \phi(y_\ell' + y_{\ell+1}')
    + \eps \sum_{\ell \in \Cs} \smfrac12 \big[ \phi(2y_\ell')
    + \phi(2y_{\ell+1}')\big],
  \end{split}
\end{equation*}
where we set $\phi(y_{-N}')=\phi(y_{N+1}')=0$. The QNL approximation
is patch test consistent; that is, $y = y^F$ is an equilibrium of the QNL energy
functional.

The linearization of the QNL equilibrium equations about $y^F$ is
\begin{equation*}
  \begin{aligned}
    (\Lqnl \uqnl)_j&=f_j&\quad&\text{for} \quad j = -N+1, \dots, N-1,\\
  \uqnl_{j}&=0&\quad&\text{for} \quad j = -N,\,N,
  \end{aligned}
\end{equation*}
where
\begin{equation}
\label{Lqnl}
\begin{split}
(\Lqnl v)_j &= \phi''_F \frac{-v_{j+1} +2 v_j - v_{j-1}}{\eps^2} \\
& \hspace{-5mm} + \phi_{2F}'' \left\{ \begin{array}{lr}
\displaystyle
 4 \frac{-v_{j+2} +2 v_j - v_{j-2}}{4 \eps^2},
& 0 \leq j \leq K-1, \\[6pt]
\displaystyle
 4 \frac{-v_{j+2} +2 v_j - v_{j-2}}{4 \eps^2}
- \frac{-v_{j+2} + 2 v_{j+1} - v_{j}}{\eps^2},
& j = K, \\[6pt]
\displaystyle
4 \frac{-v_{j+1} +2 v_j - v_{j-1}}{\eps^2}
+ \frac{-v_{j} + 2 v_{j-1} - v_{j-2}}{\eps^2},
& j = K+1, \\[6pt]
\displaystyle
4 \frac{-v_{j+1} +2 v_j - v_{j-1}}{\eps^2}, &
\hspace{-5mm} K+2 \leq j \leq N-1.
\end{array} \right.
\end{split}
\end{equation}

We can repeat our stability analysis for the periodic QNL operator in
\cite[Sec. 3.3]{doblusort:qce.stab} verbatim to obtain the following
result.

\begin{proposition}
  \label{th:stab_qnl}
  If $K < N-1$, and $\phi_{2F} \leq 0$, then
  \begin{displaymath}
    \inf_{\substack{u \in \Us \\ \|u'\|_{\ell^2_\eps} = 1}} \< \Lqnl u, u \> = A_F.
  \end{displaymath}
\end{proposition}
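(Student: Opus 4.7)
The plan is to expand $\<\Lqnl u, u\>$ as the second variation of $\Eqnl$ at $y^F$, apply an algebraic identity to reveal it as $A_F\|u'\|_{\ell^2_\eps}^2$ plus a nonnegative remainder supported near the interface and the two boundary nodes, and then produce an explicit test displacement for which the remainder vanishes.

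First I would collect the three contributions to the quadratic form coming from the three sums defining $\Eqnl$: the nearest-neighbor piece gives $\phi_F''\,\|u'\|_{\ell^2_\eps}^2$, the atomistic-NNN piece gives $\phi_{2F}''\,\eps\sum_{\ell\in\As}(u_\ell' + u_{\ell+1}')^2$, and the continuum-NNN piece gives $2\phi_{2F}''\,\eps\sum_{\ell\in\Cs}\bigl[(u_\ell')^2 + (u_{\ell+1}')^2\bigr]$, using the convention $\phi(y_{-N}')=\phi(y_{N+1}')=0$ to handle the endpoints.

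Next I would apply the elementary identity $(a+b)^2 = 2a^2+2b^2-(a-b)^2$ inside the atomistic-NNN sum. The resulting $(u_\ell')^2$-type pieces combine with the continuum-NNN pieces into a single sum over $\As\cup\Cs = \{-N+1,\dots,N-1\}$, and a short index-shift identifies this with $4\|u'\|_{\ell^2_\eps}^2 - 2\eps\bigl[(u_{-N+1}')^2+(u_N')^2\bigr]$. Putting everything together yields the clean representation
\[
\<\Lqnl u, u\> \;=\; A_F\|u'\|_{\ell^2_\eps}^2 \;-\; 2\phi_{2F}''\,\eps\bigl[(u_{-N+1}')^2 + (u_N')^2\bigr] \;-\; \phi_{2F}''\,\eps\!\sum_{\ell\in\As}(u_{\ell+1}' - u_\ell')^2.
\]
Since $\phi_{2F}''\leq 0$, the last two terms are $\geq 0$, giving the lower bound $\<\Lqnl u, u\>\geq A_F\|u'\|_{\ell^2_\eps}^2$ at once.

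For the matching upper bound I would construct $u\in\Us$ whose discrete derivative $u'$ is a nonzero constant $c$ on the block $\{-K,\dots,K+1\}$, vanishes at the extreme indices $\ell=-N+1$ and $\ell=N$, and takes a compensating constant value on the remaining indices so that $\sum_{\ell=-N+1}^{N} u_\ell' = 0$, which encodes $u_{\pm N}=0$. The hypothesis $K<N-1$ leaves the continuum region large enough for this compensation to exist. For such $u$ the interface sum vanishes because $u'$ is constant across the atomistic block, and the two boundary squares vanish by construction, so $\<\Lqnl u, u\> = A_F\|u'\|_{\ell^2_\eps}^2$ exactly, proving the infimum equals $A_F$. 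The main obstacle is purely bookkeeping: when porting the periodic argument of \cite[Sec.~3.3]{doblusort:qce.stab} to the Dirichlet setting one has to track the index shifts carefully so that no spurious boundary term spoils the clean identity above, and one has to verify that the compensation respects $u_{\pm N}=0$ without forcing $c=0$; once those are in hand both conclusions are immediate.
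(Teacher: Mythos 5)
The overall strategy — expand $\<\Lqnl u,u\>$ as the Hessian of $\Eqnl$, use $(a+b)^2=2a^2+2b^2-(a-b)^2$ to surface a sum of squares supported near the interface, read off the lower bound from the sign of $\phi_{2F}''$, and then exhibit a test displacement that annihilates the remainder — is exactly the right one, and is what the periodic proof in \cite{doblusort:qce.stab} does.

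However, there is a genuine error in your decomposition, and it is the precise bookkeeping issue you yourself flag as the "main obstacle." Reading the energy $\Eqnl$ with the continuum sum taken literally over $\Cs=\{-N+1,\dots,N-1\}\setminus\As$, you obtain the extra boundary term
\[
-2\phi_{2F}''\,\eps\bigl[(u_{-N+1}')^2+(u_N')^2\bigr].
\]
This term is \emph{not} present. The convention "$\phi(y_{-N}')=\phi(y_{N+1}')=0$" stated alongside $\Eqnl$ is there exactly so that the continuum next-nearest-neighbor sum effectively extends to $\ell=-N$ and $\ell=N$ (with the out-of-range half-contributions dropped); this restores the full Cauchy--Born weight at the two outermost strains. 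You can confirm it two independent ways: (i) computing the first variation at $y=y^F$ with your literal reading produces a nonzero force $-\frac{1}{\eps}\phi'(2F)$ at $j=\pm(N-1)$, contradicting the stated patch-test consistency of QNL; (ii) linearizing your literal reading at $j=N-1$ does \emph{not} reproduce the entry $A_F(-u_j'')$ of $\Lqnl$ given in \eqref{Lqnl} for $K+2\le j\le N-1$. The correct identity is simply
\[
\<\Lqnl u,u\> \;=\; A_F\|u'\|_{\ell^2_\eps}^2 \;-\; \phi_{2F}''\,\eps\sum_{\ell\in\As}(u_{\ell+1}'-u_\ell')^2,
\]
with no boundary contribution.

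This is not merely cosmetic. With the spurious boundary terms present, your construction of the minimizing test function must impose $u_{-N+1}'=u_N'=0$ \emph{and} constancy of $u'$ on $\{-K,\dots,K+1\}$, on top of the zero-mean constraint coming from $u_{\pm N}=0$. When $K=N-2$, the block $\{-K,\dots,K+1\}=\{-N+2,\dots,N-1\}$ together with $\{-N+1,N\}$ exhausts all of $\{-N+1,\dots,N\}$: there are no "remaining indices" on which to place a compensating value, and the zero-mean constraint forces $c=0$, i.e.\ $u=0$. So under your decomposition the infimum would be strictly greater than $A_F$ for $K=N-2$, contradicting the proposition (which allows every $K<N-1$). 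With the corrected decomposition, one only needs $u'$ constant on $\{-K,\dots,K+1\}$ and a compensating constant on $\{-N+1,\dots,-K-1\}\cup\{K+2,\dots,N\}$, a set which is nonempty precisely when $K\le N-2$; your construction then goes through for the full stated range.
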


\begin{remark}\label{depend}
  Since $\phi_{2F}''=(A_F-\phi_F'')/4,$ the linearized operators
  $(\phi''_F)^{-1}\Lat,$ $(\phi''_F)^{-1}\Lqcl,$
  $(\phi''_F)^{-1}\Lqcf,$ $(\phi''_F)^{-1}\Lqce,$ and
  $(\phi''_F)^{-1}\Lqnl$ depend only on $A_F/\phi''_F$, $N$ and $K$.
\end{remark}

\section{Stability and Spectrum of the QCF operator}
\label{sec:qcf}
In this section, we give various properties of the linearized QCF
operator, most of which are variants of our results in
\cite{dobs-qcf2,doblusort:qcf.stab}. We first give a result
for the non-coercivity of the QCF operator
which lies at the heart of many of the difficulties one encounters in
analyzing the QCF method.

\begin{theorem}[Theorem 1, \cite{dobs-qcf2}]
  \label{th:nocoerc}
  If $\phi_F'' > 0$ and $\phi_{2F}'' \in \mathbb{R} \setminus \{0\}$
  then, for sufficiently large $N,$ the operator $\Lqcf$ is {\em not}
  positive-definite. More precisely, there exist $N_0 \in \mathbb{N}$
  and $C_1 \geq C_2 > 0$ such that, for all $N \geq N_0$ and $2 \leq K
  \leq N/2$,
  \begin{displaymath}
    - C_1 N^{1/2} \leq  \inf_{\substack{\vb \in \Us \\ \|\vb'\|_{\ell^2_\eps} = 1}}
   \big \langle \Lqcf {\vb}, {\vb} \big\rangle
    \leq - C_2 N^{1/2}.
  \end{displaymath}
\end{theorem}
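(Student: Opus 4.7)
The plan is to prove the two inequalities separately: the upper bound on $\inf$ via an explicit test function, and the matching lower bound via an a priori estimate on the bilinear form. For both, I would begin by using the decomposition $\Lqcf = \Lqcl + D$, where $D := \chi_\As(\Lat - \Lqcl)$ and an explicit stencil calculation gives $(Dv)_j = -\phi_{2F}''\eps^{-2}\Delta^4 v_j$ for $j\in\As$ and zero otherwise, where $\Delta^4$ denotes the centered fourth-order difference. Combined with the identity $\langle \Lqcl v, v\rangle = A_F\|v'\|_{\ell^2_\eps}^2$, this reduces the analysis to $\sum_{j\in\As}\eps (Dv)_j v_j$. Two rounds of discrete summation by parts rewrite this sum as a coercive bulk contribution of the form $-\phi_{2F}''\eps^2\|v''\|_{\ell^2_\eps}^2$ restricted to the atomistic region (non-negative since $\phi_{2F}'' \le 0$) plus a localized \emph{interface bilinear form} $I(v)$ supported within a bounded number of lattice spacings of $\pm K$. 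Because $D$ is non-symmetric, $I(v)$ contains sign-indefinite cross terms of the schematic form $\eps^{-1} v_{K\pm a}\, v'_{K\pm b}$, which are responsible for the failure of positive-definiteness.

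For the upper bound, I would construct a test function $v^{(N)} = w^{\mathrm{bulk}} + w^{\mathrm{loc}}$, where $w^{\mathrm{bulk}}$ is a fixed piecewise-smooth unit-$H^1$ profile supplying nonzero interface traces of $v$ and $v'$, and $w^{\mathrm{loc}}$ is an exponentially decaying mode concentrated on $O(1)$ atoms near each $\pm K$, with amplitude and sign chosen to activate the most negative cross term of $I$. A clean way to discover both the profile and the sharp constant is to Fourier-diagonalize $\Lat$ on the atomistic half-line and $\Lqcl$ on the continuum half-line meeting at an interface, and glue them via a $2\times 2$ transfer matrix; the resulting generalized eigenvalue problem at the interface has a resonant mode whose Rayleigh quotient is of order $-\sqrt N$. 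For the matching lower bound, I would estimate each summand of $I(v)$ by Cauchy--Schwarz together with the elementary inequalities $|v_\ell| \le C\|v'\|_{\ell^2_\eps}$ (discrete Poincar\'e) and $|v'_\ell| \le \eps^{-1/2}\|v'\|_{\ell^2_\eps} = \sqrt N\,\|v'\|_{\ell^2_\eps}$, yielding $|I(v)| \le C\sqrt N\,\|v'\|_{\ell^2_\eps}^2$.

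The main obstacle is establishing the sharp $\sqrt N$ scaling on the upper bound. The lower bound follows almost automatically from pointwise Sobolev-type inequalities, but the matching upper bound requires identifying the right test function: because $\Lqcf$ is non-symmetric, the worst-case direction is not an eigenvector of any symmetric proxy, and one must carefully combine left- and right-decaying interface modes on the two sides of $\pm K$ to activate the dominant negative cross term. The stencil of $D$ has width five, producing several boundary terms upon summation by parts whose signs and magnitudes must be combined and balanced; distinguishing the true $\sqrt N$ behavior from naive guesses of $O(1)$ or $O(N)$ requires the explicit Fourier/transfer-matrix analysis developed in~\cite{dobs-qcf2,doblusort:qcf.stab}.
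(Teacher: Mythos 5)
The paper does not reprove Theorem~\ref{th:nocoerc}; it imports it verbatim from \cite{dobs-qcf2}. Measured against the argument in that reference, your overall strategy is correct and is essentially the same approach: write $\Lqcf = \Lqcl + D$ with $D$ supported in $\As$ and $(Dv)_j = -\phi_{2F}''\eps^{-2}\Delta^4 v_j$, note that $\<\Lqcl v,v\> = A_F\|v'\|_{\ell^2_\eps}^2$, sum by parts on $\As$ to split $\<Dv,v\>$ into a bulk contribution (which recombines into $-\eps^2\phi_{2F}''\|v''\|^2$ restricted to $\As$) plus boundary terms at $\pm K$, show the boundary terms can be as large as $\sqrt N$ (upper bound on the infimum via a test function; lower bound via pointwise Sobolev/Poincar\'e estimates). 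The lower bound paragraph is sound: $|v_\ell|\le C\|v'\|_{\ell^2_\eps}$ and $\max_\ell|v'_\ell|\le \eps^{-1/2}\|v'\|_{\ell^2_\eps}$ indeed give $|I(v)|\le C\sqrt N\|v'\|^2_{\ell^2_\eps}$.

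There is, however, a concrete error in the intermediate step that would bite if executed literally. You assert that the interface form contains terms of the schematic type $\eps^{-1}v_{K\pm a}\,v'_{K\pm b}$; under the constraint $\|v'\|_{\ell^2_\eps}=1$ this would scale as $\eps^{-1}\cdot O(1)\cdot O(\eps^{-1/2})=O(N^{3/2})$, which contradicts the $\sqrt N$ that both the theorem and your own upper-bound plan predict. The actual boundary term carries \emph{no} $\eps^{-1}$ prefactor: after the two summations by parts the $\eps^{-1}\sum_{j\in\As}\Delta^4 v_j\,v_j$ yields boundary contributions of the form $\eps^{-1}(\Delta^2 v)_j\,v_\ell = (v'_{j+1}-v'_j)\,v_\ell$, and indeed the cleanest closed form is the one used in this paper (Lemma~\ref{th:L1invLqcf_representation} with $u=v$): the interface contribution to $\<\Lqcf v,v\>$ is exactly $\phi_{2F}''\big[\tilde\alpha_{-K}(v')\,v_{-K}-\tilde\alpha_K(v')\,v_K\big]$ with $\tilde\alpha_K(v')=v'_{K+2}-2v'_{K+1}+v'_K$. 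The $\sqrt N$ scaling then comes precisely from $|\tilde\alpha_K(v')|\le 4\max_\ell|v'_\ell|\le 4\eps^{-1/2}\|v'\|_{\ell^2_\eps}$ together with $|v_K|=O(1)$. Once the interface term is written correctly, the upper-bound test function is also simpler than your transfer-matrix resonance: a fixed smooth unit-$\Us^{1,2}$ profile with $v_K=O(1)$, plus a single spike $v'_{K+1}=\pm\eps^{-1/2}$ to make $\tilde\alpha_K$ large with the right sign, already attains the $-C_2\sqrt N$ bound. Finally, a small point: you tacitly assume $\phi_{2F}''\le 0$ (``non-negative since $\phi_{2F}''\le 0$''), but the theorem only requires $\phi_{2F}''\neq 0$; for $\phi_{2F}''>0$ the bulk piece $-\phi_{2F}''\eps^2\|v''\|^2$ is negative but still controlled by the elementary inequality $\eps^2\|v''\|^2\le 4\|v'\|^2$, and the interface argument is unchanged after adjusting the sign of the spike.
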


The proof of Theorem \ref{th:nocoerc} yields also
the following asymptotic result on the operator norm of $\Lqcf$.  Its
proof is a straightforward extension of \cite[Lemma 2]{dobs-qcf2},
which covers the case $p = 2$, and we therefore omit it.

\begin{lemma}\label{lpbound}
  Let $\phi_{2F}'' \neq 0$, then there exists a constant $C_3 > 0$ such
  that for sufficiently large $N$, and for $2 \leq K \leq N/2$,
  \begin{displaymath}
    C_3^{-1} N^{1/p} \leq \big\| \Lqcf \big\|_{L(\Us^{1,p},\ \Us^{-1,p})}
    \leq C_3 N^{1/p}.
  \end{displaymath}
\end{lemma}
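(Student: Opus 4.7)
The plan is to prove the two inequalities separately, with the upper bound following from a comparison with $\Lqnl$ and the lower bound from an explicit interface-concentrated test pair.

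For the upper bound, I would decompose $\Lqcf = \Lqnl + R$, where $R := \Lqcf - \Lqnl$. Because both operators are built from nearest- and next-nearest-neighbor second differences, and both agree with the atomistic operator in the deep atomistic region and with a symmetric continuum-like operator in the deep continuum region, $R v$ is supported only at a bounded number of interface sites (by symmetry, four points near $j = \pm K$). Since $\Lqnl$ arises from an energy, summation by parts writes $\<\Lqnl u, v\>$ as a bilinear form in $u'$ and $v'$ with bounded coefficients, yielding $\|\Lqnl\|_{L(\Us^{1,p},\Us^{-1,p})} \leq C$ by H\"older. For $R$, the explicit formulas for $\Lat - \Lqnl$ and $\Lqcl - \Lqnl$ show that each nonzero entry of $Ru$ has the form $c\eps^{-1}(u'_{j+a} - u'_{j+b})$; hence $\eps |(Ru)_j| \leq C \|u'\|_{\ell^\infty_\eps}$. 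Combining this pointwise estimate over the $O(1)$ interface sites with the inverse-type embedding $\|u'\|_{\ell^\infty_\eps} \leq N^{1/p}\|u'\|_{\ell^p_\eps}$ and with the one-dimensional Sobolev-type embedding $\|v\|_{\ell^\infty} \leq 2^{1/p} \|v'\|_{\ell^q_\eps}$ (valid for $v \in \Us$ because $v_{\pm N} = 0$, via $v_j = \eps\sum_{i \leq j} v'_i$ and H\"older), I get $|\<Ru,v\>| \leq C N^{1/p}\|u\|_{\Us^{1,p}}\|v\|_{\Us^{1,q}}$. This delivers the upper bound $\|\Lqcf\|_{L(\Us^{1,p},\Us^{-1,p})} \leq C N^{1/p}$.

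For the lower bound, I would produce a quantitative test pair mirroring the extremizer used for $p=2$ in \cite{dobs-qcf2}. Take $u$ to be a single-spacing hat at the interface normalized in $\Us^{1,p}$: set $u_K = \eps(N/2)^{1/p}$ and $u_j = 0$ otherwise, so that $u'_K = (N/2)^{1/p}$, $u'_{K+1} = -(N/2)^{1/p}$, and $\|u\|_{\Us^{1,p}} = 1$. Choose $v$ as a normalized global tent, $v_j = (1 - |j|/N)/2^{1/q}$, so that $\|v\|_{\Us^{1,q}} = 1$ and $v_K = (1 - K/N)/2^{1/q}$, which is bounded below by a positive constant because $K \leq N/2$. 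Using the explicit formula for $(\Lqcf - \Lqnl)u$ at $j = K$, namely $\phi_{2F}''(u'_{K+2} - u'_{K+1})/\eps$, the interface pair $(u,v)$ satisfies
\begin{equation*}
\<(\Lqcf - \Lqnl)u, v\> = \phi_{2F}''\,(N/2)^{1/p}\,v_K + (\text{symmetric contribution at } -K),
\end{equation*}
which is bounded below in absolute value by $c|\phi_{2F}''| N^{1/p}$. Meanwhile, $|\<\Lqnl u,v\>| \leq C$ from the already-established bound on $\Lqnl$, so for $N$ large enough the interface term dominates and $|\<\Lqcf u,v\>| \geq c N^{1/p}$.

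The main obstacle is the lower bound: one must verify that the dominant $N^{1/p}$-sized interface contribution is not cancelled by contributions from the two interfaces or from $\Lqnl u$. Symmetry takes care of the first concern (the two interfaces contribute with the same sign when $u$ and $v$ are chosen to be supported, respectively, at just one interface and to be even globally, or one adjusts the signs of the hat), and the $O(1)$ bound on $\|\Lqnl u\|_{\Us^{-1,p}}$ controls the second. Beyond that, all remaining work consists of routine H\"older and summation-by-parts estimates, justifying the claim in the statement that the proof is a straightforward extension of the $p=2$ case.
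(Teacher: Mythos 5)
Your decomposition $\Lqcf = \Lqnl + R$ with $R$ supported on the $O(1)$ interface indices is the right idea, and the estimates for the upper bound are correct: $\Lqnl$ is uniformly bounded from $\Us^{1,p}$ to $\Us^{-1,p}$ because its bilinear form is local in $u',v'$ with bounded coefficients, and the $N^{1/p}$ factor for $R$ arises exactly from the inverse inequality $\|u'\|_{\ell^\infty_\eps}\le N^{1/p}\|u'\|_{\ell^p_\eps}$ paired with the discrete Sobolev embedding $\|v\|_{\ell^\infty}\lesssim\|v'\|_{\ell^q_\eps}$. Since the paper omits its own proof and refers only to the $p=2$ case in \cite{dobs-qcf2}, this is as close a reconstruction as can be expected, and it is in the same spirit as Lemma~\ref{th:L1invLqcf_representation}, which isolates interface point-force contributions.

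The lower-bound computation needs a small correction. For your choice of $u$ (a hat at $j=K$), the vector $Ru=(\Lqcf-\Lqnl)u$ is nonzero at {\em two} indices near the right interface, $(Ru)_K=\phi_{2F}''(u_{K+1}'-u_{K+2}')/\eps=-\phi_{2F}''u_{K+1}''$ and $(Ru)_{K+1}=\phi_{2F}''u_K''$, while the contribution near $j=-K$ vanishes entirely because the stencil there never sees the support of $u$ (for $K\ge 2$), so the ``symmetric contribution at $-K$'' you invoke is not present. This does not break the argument: with $u_K''=-2(N/2)^{1/p}/\eps$ and $u_{K+1}''=(N/2)^{1/p}/\eps$, both interface values of $Ru$ equal $-\phi_{2F}''$ times a positive multiple of $(N/2)^{1/p}/\eps$, hence they share a sign, and testing against the tent $v$ (with $v_K,v_{K+1}\ge c>0$ because $K\le N/2$) gives $|\<Ru,v\>|\ge c\,|\phi_{2F}''|\,N^{1/p}$ with no possibility of cancellation. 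Together with the $O(1)$ bound on $|\<\Lqnl u,v\>|$, this yields the stated lower bound for $N$ large. Also note the sign slip in your formula for $(\Lqcf-\Lqnl)u|_K$ (you wrote $u_{K+2}'-u_{K+1}'$ instead of $u_{K+1}'-u_{K+2}'$); it is immaterial for the absolute-value estimate, but worth fixing for consistency with the paper's QNL interface terms in~\eqref{Lqnl}.
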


As a consequence of Theorem \ref{th:nocoerc} and Lemma~\ref{lpbound}, we analyzed the
stability of $\Lqcf$ in alternative norms. By following the proof of
\cite[Theorem 3]{doblusort:qcf.stab} verbatim (see also \cite[Remark
3]{doblusort:qcf.stab}), we can obtain the following sharp stability result.

\begin{proposition}
  \label{th:sharp_stab_U2inf}
  If $A_F > 0$ and $\phi_{2F}'' \leq 0$, then $\Lqcf$ is invertible with
  \begin{displaymath}
    \big\|(\Lqcf)^{-1}\big\|_{L(\Us^{0,\infty},\ \Us^{2,\infty})} \leq 1/A_F.
  \end{displaymath}
  If $A_F = 0,$ then $\Lqcf$ is singular.
\end{proposition}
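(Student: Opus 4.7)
The plan is to recast the system $\Lqcf u = f$ as a scalar recursion for the second differences $w_j := u_j''$ and then close it via a discrete maximum principle. First, I would establish the telescoping identity
\[
\eps^{-2}\bigl(v_{j+2} - 2 v_j + v_{j-2}\bigr) = v_{j+1}'' + 2 v_j'' + v_{j-1}'',
\]
which follows by writing $v_{j+2} - v_j$ and $v_j - v_{j-2}$ as sums of consecutive first differences and regrouping. With this, $(\Lat v)_j = -\phi''_F v_j'' - \phi''_{2F}\bigl(v_{j+1}'' + 2 v_j'' + v_{j-1}''\bigr)$ for $j \in \As$, while $(\Lqcl v)_j = -A_F v_j''$ on $\Cs$. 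Setting $\alpha := -\phi''_{2F} \geq 0$ and using $\phi''_F + 2 \phi''_{2F} = A_F + 2\alpha$, the system $\Lqcf u = f$ turns into
\[
(A_F + 2\alpha)\, w_j = \alpha\,(w_{j+1} + w_{j-1}) - f_j \quad (j \in \As), \qquad A_F\, w_j = -f_j \quad (j \in \Cs).
\]
The standing assumption $K \leq N-2$ ensures $w_{j\pm 1}$ is defined for every $j \in \As$, so this single recursion covers the interface sites $j = \pm K$ without a separate formula.

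Next, I would apply a discrete maximum principle to $M := \max_{-N+1 \leq j \leq N-1} |w_j|$. If $M$ is attained on $\Cs$, the continuum relation gives $M \leq \|f\|_{\ell^\infty_\eps}/A_F$ immediately. If $M$ is attained at some $j^* \in \As$, then
\[
(A_F + 2\alpha)\, M \leq \alpha\,(|w_{j^*+1}| + |w_{j^*-1}|) + |f_{j^*}| \leq 2\alpha\, M + \|f\|_{\ell^\infty_\eps},
\]
which rearranges (using $A_F > 0$) to $M \leq \|f\|_{\ell^\infty_\eps}/A_F$. In either case $\|u\|_{\Us^{2,\infty}} = \|u''\|_{\ell^\infty_\eps} \leq \|f\|_{\ell^\infty_\eps}/A_F$. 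Setting $f = 0$ gives injectivity of $\Lqcf$ on the finite-dimensional space $\Us$, hence invertibility, and the same estimate delivers the operator-norm bound $\|(\Lqcf)^{-1}\|_{L(\Us^{0,\infty},\ \Us^{2,\infty})} \leq 1/A_F$.

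For the singular case $A_F = 0$, the continuum equations $(\Lqcf u)_j = -A_F u_j'' = 0$ are trivially satisfied for every $u \in \Us$. Hence $\ker \Lqcf$ coincides with the solution set of the $2K+1$ linear atomistic equations $(\Lat u)_j = 0$, $j \in \As$. Since $\dim \Us = 2N-1$ and $K \leq N-2$ gives $(2N-1)-(2K+1) \geq 2$, the kernel has dimension at least two and $\Lqcf$ is singular.

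The only real obstacle is the bookkeeping at the atomistic--continuum interface: one must verify that the same three-term recursion for $w$ is valid at $j = \pm K$ without spurious correction terms. This is exactly what the telescoping identity buys, since the $\Lat$ stencil reaches only to $j \pm 2$, which lies in the admissible range whenever $K \leq N-2$. Once this is in place, the contraction factor $2\alpha/(A_F + 2\alpha) < 1$ is uniform across $\As$ and the maximum principle closes with the sharp constant $1/A_F$.
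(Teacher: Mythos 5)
Your proof is correct. The telescoping identity and the resulting reduction to the tridiagonal recursion for $w_j = u_j''$ (namely $A_F w_j = -f_j$ on $\Cs$ and $(A_F + 2|\phi_{2F}''|)w_j = |\phi_{2F}''|(w_{j+1}+w_{j-1}) - f_j$ on $\As$) check out, the standing assumption $K \leq N-2$ keeps $w_{\pm(K+1)}$ inside the index range $\{-N+1,\dots,N-1\}$, and the discrete maximum principle delivers the sharp constant $1/A_F$ whether the maximum of $|w_j|$ sits in $\Cs$ or in $\As$. Injectivity (from the $f=0$ bound plus the boundary conditions) and finite dimensionality give invertibility, and the dimension count for $A_F = 0$ is also right. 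The paper does not display its own argument here---it delegates to \cite[Theorem 3]{doblusort:qcf.stab}---but the second-difference reformulation you use is the same structural fact that underlies the similarity $\Lqcf = \Li^{-1}\Lqnl\Li$ in Lemma~\ref{th:samespec} and the appendix computation \eqref{eq:app_co:D2_form_M}. What you do differently is close the argument with a discrete maximum principle rather than an explicit representation of $\Li^{-1}\Lqcf$ or an operator-norm computation: the factor $1/A_F$ appears already in the continuum equations, and the atomistic stencil contributes a strict contraction factor $2|\phi_{2F}''|/(A_F+2|\phi_{2F}''|) < 1$ under $\phi_{2F}'' \leq 0$, so it cannot worsen the bound. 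This is an appealingly elementary and self-contained route to the sharp estimate, and it makes transparent why the interface produces no additional loss in the $\Us^{2,\infty}$ norm.
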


\medskip This result shows that $\Lqcf$ is operator stable up to the
critical strain $\Fcrit$ at which the atomistic model loses its
stability as well (cf. Section \ref{sec:model_at}).

\subsection{Spectral properties of $\Lqcf$ in $\Us^{0,2}=\ell^2_\eps$}
\label{sec:qcf:U02}
The spectral properties of the $\Lqcf$ operator are fundamental for
the analysis of the performance of iterative methods in Hilbert
spaces.  The basis of our analysis of $\Lqcf$ in the Hilbert space
$\Us^{0,2}$ is the surprising observation that, even though $\Lqcf$ is
non-normal, it is nevertheless diagonalizable and its spectrum is
identical to that of $\Lqnl$. We first observed this numerically in
\cite[Section~4.4]{doblusort:qcf.stab} for the case of periodic
boundary conditions. A proof has since been given in \cite[Section
3]{qcfspec}, which translates verbatim to the case of Dirichlet
boundary conditions and yields the following result.

\begin{lemma}
  \label{th:samespec}
  For all $N \geq 4,\ 1 \leq K \leq N-2$, we have the identity
  \begin{equation}
    \label{eq:sim_LqcfLqnl}
    \Lqcf = \Li^{-1} \Lqnl \Li.
  \end{equation}
  In particular, the operator $\Lqcf$ is diagonalizable and its
  spectrum is identical to the spectrum of $\Lqnl$.
\end{lemma}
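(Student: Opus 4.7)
The plan is to verify the operator identity $\Li\,\Lqcf = \Lqnl\,\Li$ directly. Since $\Li:\Us\to\Us$ is the standard discrete Dirichlet Laplacian, hence symmetric positive definite and in particular invertible, this identity is equivalent to \eqref{eq:sim_LqcfLqnl}. Once the similarity is established, $\Lqcf$ inherits diagonalizability and its real spectrum from $\Lqnl$, since $\Lqnl$ is the Hessian of $\Eqnl$ and therefore symmetric, and similarity preserves both eigenvalues and Jordan structure.

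To check the identity entrywise, I would partition the interior index set $\{-N+1,\dots,N-1\}$ into an atomistic interior $\{|j|\le K-2\}$, a continuum interior $\{|j|\ge K+3\}$, and a finite interface band $\{K-2<|j|<K+3\}$. Writing $\Lat=\phi_F''L_1+\phi_{2F}''L_2$, where $L_1=\Li$ and $L_2$ is the second-neighbor Laplacian, and $\Lqcl=A_F L_1$, I observe that on the atomistic interior both $\Lqcf$ and $\Lqnl$ reduce to the constant-coefficient operator $\Lat$, which commutes with $\Li$; on the continuum interior both reduce to $A_F\Li$, and both products equal $A_F\Li^2$. In these two regions the identity is automatic.

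The substantive computation is finite, carried out on the interface band and its mirror near $-K$. For each $j\in\{K-2,K-1,K,K+1,K+2\}$ I would expand $(\Li\Lqcf v)_j$ using the $\Lat$-stencil on $\As$-rows and the $A_F\Li$-stencil on $\Cs$-rows, expand $(\Lqnl\Li v)_j$ using the piecewise formula \eqref{Lqnl}, and match the coefficients of $v_{j-3},\dots,v_{j+3}$. The $\phi_F''$-part cancels immediately, since both operators carry the same $\phi_F''L_1$-contribution, so the real work reduces to matching the $\phi_{2F}''$-parts. Organizing this via the algebraic identity $L_2-4L_1=-\eps^2 L_1^2$ (valid away from boundaries) makes the cancellations transparent: it converts the explicit interface corrections appearing in \eqref{Lqnl} at $j=K$ and $j=K+1$ into exactly the defect terms that arise when the outer $\Li$ straddles the atomistic/continuum split in $\Lqcf$.

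The main obstacle is bookkeeping rather than conceptual difficulty: one has to track which of the two $\Lqcf$-stencils is used at each of the three neighbors $j-1,j,j+1$ touched by the outer $\Li$, and then confirm that every mismatch is absorbed by the interface terms in \eqref{Lqnl}. Since the proof in \cite[Section 3]{qcfspec} carries this out in the periodic setting and the algebra is purely local, the Dirichlet case will reduce to the same calculation once one notes that the physical boundary at $\pm N$ sits well inside the continuum-interior regime under the assumption $K\le N-2$, so the boundary rows of $\Li\Lqcf$ and $\Lqnl\Li$ both reproduce the plain $A_F\Li$ stencil and agree trivially.
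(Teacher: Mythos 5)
Your proposal is correct and matches the paper's approach: the paper itself gives no argument beyond citing \cite[Section 3]{qcfspec}, which carries out exactly the entrywise verification of $\Li\Lqcf = \Lqnl\Li$ that you outline, localized to the interface rows near $\pm K$ (in fact only $|j|\in\{K,K+1\}$, since at $|j|=K\pm 2$ both sides already reduce to the plain pentadiagonal or QCL stencil), with the remaining rows trivial by the commutation of constant-coefficient stencils. Your use of the identity $L_2-4L_1=-\eps^2 L_1^2$ to organize the cancellation, your derivation of diagonalizability and real spectrum from the symmetry of $\Lqnl$, and your observation that the Dirichlet boundary sits harmlessly inside the continuum region under $K\le N-2$ are all sound.
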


\medskip
We denote the eigenvalues of $\Lqnl$ (and $\Lqcf$) by
\begin{displaymath}
  0<\lqnl_1\le ... \lqnl_{\ell}\le ...\le \lqnl_{2N-1}.
\end{displaymath}
The following lemma gives a lower bound for $\lqnl_{1},$ an upper
bound for $\lqnl_{2N-1},$ and consequently an upper bound for
$\cond(\Lqnl) = {\lqnl_{2N-1}}/{\lqnl_{1}}$.


\begin{lemma}\label{qnlcond}
If $K < N-1$ and $\phi_{2F}'' \leq 0$, then
\begin{equation*}
\begin{gathered}
\lqnl_{1}\ge  2\,A_F, \qquad
\lqnl_{2N-1}\le \left(A_F-4\phi_{2F}''\right)\eps^{-2}=\phi_{F}''\eps^{-2},
\quad \text{and} \\
\cond(\Lqnl) =
\frac{\lqnl_{2N-1}}{\lqnl_{1}} \le
\left(\frac{\phi_{F}''}{2A_F}\right)\eps^{-2}.
\end{gathered}
\end{equation*}
\end{lemma}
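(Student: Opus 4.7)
My plan is to exploit the fact that $\Lqnl$ is symmetric on $\Us^{0,2}$ (being the Hessian of $\Eqnl$ at $y^F$) and positive-definite by Proposition~\ref{th:stab_qnl}, so its extremal eigenvalues are characterized by Rayleigh quotients. I would prove the two eigenvalue bounds via quadratic-form inequalities comparing $\<\Lqnl u, u\>$ to $\<\Li u, u\> = \|u'\|_{\ell^2_\eps}^2$, and then pass between $\|u'\|_{\ell^2_\eps}$ and $\|u\|_{\ell^2_\eps}$ using the explicit spectrum of the discrete Laplacian $\Li$ on the Dirichlet grid. The condition-number estimate is then immediate by taking the ratio.

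For the lower bound, Proposition~\ref{th:stab_qnl} provides $\<\Lqnl u, u\> \geq A_F \|u'\|_{\ell^2_\eps}^2 = A_F \<\Li u, u\>$, so $\lqnl_1 \geq A_F\,\lambda_1^{\Li}$. The eigenvalues of $\Li$ with Dirichlet conditions at $x_{\pm N}$ are $\lambda_k^{\Li} = 4\eps^{-2}\sin^2\!\bigl(k\pi/(4N)\bigr)$, $k = 1,\dots,2N-1$. A short trigonometric estimate (monotonicity of $\sin(x)/x$ on $(0,\pi/2]$, which gives $\sin(\pi/(4N)) \geq \sqrt{2}/(2N)$ for every $N \geq 1$, with equality at $N=1$) shows $\lambda_1^{\Li} \geq 2$ uniformly, hence $\lqnl_1 \geq 2A_F$.

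For the upper bound, I would compute the Hessian of $\Eqnl$ at $y^F$ directly in the direction $u$ to obtain
\[
\<\Lqnl u, u\> = \phi''_F\|u'\|_{\ell^2_\eps}^2 + \phi''_{2F}\left\{\eps\sum_{\ell\in\As}(u'_\ell + u'_{\ell+1})^2 + 2\eps\sum_{\ell\in\Cs}\bigl[(u'_\ell)^2 + (u'_{\ell+1})^2\bigr]\right\}.
\]
The bracketed quantity is manifestly nonnegative, and $\phi''_{2F}\leq 0$ by hypothesis, so the second term is nonpositive and may be dropped:
\[
\<\Lqnl u, u\> \leq \phi''_F\|u'\|_{\ell^2_\eps}^2 = (A_F - 4\phi''_{2F})\|u'\|_{\ell^2_\eps}^2.
\]
Converting this to the $\|u\|_{\ell^2_\eps}$-norm via $\|u'\|_{\ell^2_\eps}^2 \leq \lambda_{\max}^{\Li}\|u\|_{\ell^2_\eps}^2$ yields the claimed $\eps^{-2}$ estimate for $\lqnl_{2N-1}$, and dividing by the lower bound $2A_F$ gives the condition-number bound.

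The main obstacle, as I see it, is the bookkeeping in the Hessian computation: the QNL energy patches atomistic and continuum contributions at the interface $\ell = \pm K$, and one must confirm that the interface cross-terms preserve the sign pattern $\phi''_{2F}\cdot(\text{nonneg})$ that underwrites both directions of the inequality. Once this sign structure is verified, the Poincaré-type conversions against the spectrum of $\Li$ are entirely elementary, and no further perturbation analysis is needed.
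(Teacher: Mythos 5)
Your overall strategy---Rayleigh quotients for the symmetric positive-definite operator $\Lqnl$, a quadratic-form comparison against $\Li$, and the explicit Dirichlet spectrum of the discrete Laplacian---is the natural one, and your Hessian formula
\[
\<\Lqnl u, u\> = \phi''_F\|u'\|_{\ell^2_\eps}^2 + \phi''_{2F}\Big\{\eps\sum_{\ell\in\As}(u'_\ell + u'_{\ell+1})^2 + 2\eps\sum_{\ell\in\Cs}\big[(u'_\ell)^2 + (u'_{\ell+1})^2\big]\Big\}
\]
is correct and cleanly displays the sign structure you wanted. The lower-bound argument is sound: $\<\Lqnl u,u\>\ge A_F\<\Li u,u\>$ gives $\lqnl_1\ge A_F\lambda_1^{\Li}$, and your estimate $\sin(\pi/(4N))\ge\sqrt2/(2N)$ from the monotonicity of $\sin x/x$ on $(0,\pi/4]$ does yield $\lambda_1^{\Li}=\smfrac{4}{\eps^2}\sin^2(\pi/(4N))\ge 2$, hence $\lqnl_1\ge 2A_F$.

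The gap is in the upper bound. You assert that dropping the nonpositive $\phi_{2F}''$-term and then applying $\|u'\|_{\ell^2_\eps}^2\le\lambda_{\max}^{\Li}\|u\|_{\ell^2_\eps}^2$ ``yields the claimed $\eps^{-2}$ estimate,'' but $\lambda_{\max}^{\Li}=\smfrac{4}{\eps^2}\sin^2\!\big(\smfrac{(2N-1)\pi}{4N}\big)$, which is close to $4\eps^{-2}$ and exceeds $\eps^{-2}$ for every $N\ge 1$. Carried out honestly, your chain of inequalities gives $\lqnl_{2N-1}\le\phi_F''\lambda_{\max}^{\Li}<4\phi_F''\eps^{-2}$ and correspondingly $\cond(\Lqnl)\le\smfrac{2\phi_F''}{A_F}\eps^{-2}$, both a factor of roughly four larger than the constants stated in the lemma. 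In fact the lemma's upper bound as written appears not to be attainable by any argument: letting $\phi_{2F}''\to 0^-$ gives $\Lqnl\to\phi_F''\Li$, for which $\lqnl_{2N-1}\to\phi_F''\lambda_{\max}^{\Li}>\phi_F''\eps^{-2}$, so the claimed bound $\lqnl_{2N-1}\le\phi_F''\eps^{-2}$ fails in this limiting case. You should therefore not claim that your computation reproduces the stated constant; either record the weaker (and, as far as I can tell, correct) bound $4\phi_F''\eps^{-2}$, or at minimum flag the discrepancy. Note also that since the condition-number estimate in the lemma is obtained simply as the ratio $\lqnl_{2N-1}/\lqnl_1$, the same factor propagates there.
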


\medskip For the analysis of iterative methods, we are also interested
in the condition number of a basis of eigenvectors of $\Lqcf$ as $N$
tends to infinity.
Employing Lemma \ref{th:samespec}, we can write $\Lqcf = \Li^{-1}
\Lambda^{{\rm qcf}} \Li$ where $\Li$ is the discrete Laplacian
operator and $\Lambda^{{\rm qcf}}$ is diagonal. The columns of
$\Li^{-1}$ are poorly scaled; however, a simple rescaling was found in
\cite[Thm. 3.3]{qcfspec} for periodic boundary conditions. The
construction and proof translate again verbatim to the case of
Dirichlet boundary conditions and yield the following result (note, in
particular, that the main technical step, \cite[Lemma 4.6]{qcfspec}
can be applied directly).
\begin{lemma}
  \label{th:qcf:cond_U02}
  Let $A_F > 0$, then there exists a matrix $V$ of eigenvectors for
  the force-based QC operator $\Lqcf$ such that $\cond(V)$ is bounded
  above by a constant that is independent of $N$.
\end{lemma}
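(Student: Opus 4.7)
The plan is to build the eigenvector matrix of $\Lqcf$ out of the orthonormal eigenvector basis of $\Lqnl$ via the similarity from Lemma \ref{th:samespec}, and then rescale columns so that the resulting basis is uniformly well-conditioned in $N$. Concretely, since $A_F > 0$ and $\phi_{2F}'' \le 0$ give $\Lqnl$ symmetric positive definite on $\Us^{0,2}$ (by Proposition \ref{th:stab_qnl}), we may write $\Lqnl = Q \Lamqcf Q^{T_\eps}$, where $\Lamqcf$ is the diagonal matrix of eigenvalues $\lqnl_k$ and $Q$ is orthogonal with respect to the $\ell^2_\eps$ inner product. Combining with the identity $\Lqcf = \Li^{-1}\Lqnl\Li$ from Lemma \ref{th:samespec} gives
\begin{equation*}
  \Lqcf \,=\, (\Li^{-1} Q)\, \Lamqcf\, (\Li^{-1} Q)^{-1},
\end{equation*}
so the columns of $\Li^{-1} Q$ are eigenvectors of $\Lqcf$. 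However, because $\Li^{-1}$ has eigenvalues ranging from $O(1)$ down to $O(\eps^2)$, these columns are very badly scaled, and taking $V = \Li^{-1} Q$ directly would yield $\cond(V) = O(N^2)$.

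The key step is therefore to introduce a diagonal rescaling $D = \mathrm{diag}(d_k)$ and set $V := \Li^{-1} Q D$, so that $V^{-1} = D^{-1} Q^{T_\eps} \Li$. The natural choice is $d_k \asymp \|\Li^{-1} q_k\|_{\ell^2_\eps}^{-1}$, which normalizes each eigenvector to unit length. With this choice, $\cond(V)$ reduces to the condition number of the change-of-basis between the orthonormal system $\{q_k\}$ of $\Lqnl$ (which is essentially sinusoidal away from the interface) and the normalized system $\{V e_k\}$. The task is then to show that this ratio stays $O(1)$ uniformly in $N$.

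The main obstacle, and the heart of the argument, is to show that the eigenvectors $q_k$ of $\Lqnl$ are sufficiently close (in the appropriate sense) to sines so that applying $\Li^{-1}$ rescales them in a uniform way; this is exactly the content of \cite[Lemma 4.6]{qcfspec}, which provides componentwise and norm estimates on $\Li^{-1} q_k$ showing that the normalized columns $d_k \Li^{-1} q_k$ form a basis whose Gram matrix is a bounded perturbation of the identity. Because $\Lqnl$ differs from $A_F \Li$ only by rank-$O(1)$ interface corrections (see~\eqref{Lqnl}), the eigenvectors $q_k$ inherit enough of the sinusoidal structure of the discrete Laplacian eigenbasis for this estimate to go through, and crucially the bounds depend only on $A_F/\phi_F''$, not on $N$ or $K$ (cf.\ Remark \ref{depend}).

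Once the Gram matrix of the rescaled columns is shown to be a uniformly bounded, uniformly invertible perturbation of the identity, both $\|V\|_{\Us^{0,2}}$ and $\|V^{-1}\|_{\Us^{0,2}}$ are controlled by constants depending only on $A_F/\phi_F''$, giving the desired $N$-independent bound on $\cond(V)$. Since, as the authors note, the periodic construction of \cite[Thm.~3.3]{qcfspec} transfers verbatim once Dirichlet boundary conditions are built into the eigenbasis of $\Lqnl$, the proof amounts to invoking that construction with $Q$ and $\Lamqcf$ taken from the Dirichlet-boundary version of $\Lqnl$ used here.
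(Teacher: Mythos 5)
Your proposal follows the same route as the paper: unwrap the similarity $\Lqcf = \Li^{-1}\Lqnl\Li$ through the orthogonal diagonalization of $\Lqnl$, note that the raw eigenvector basis $\Li^{-1}Q$ is badly scaled, and delegate the uniform bound on the condition number of the rescaled basis to the construction in \cite[Thm.~3.3, Lemma~4.6]{qcfspec}, which the paper asserts carries over verbatim to Dirichlet boundary conditions. One small inaccuracy in your heuristic justification: looking at \eqref{Lqnl}, $\Lqnl$ differs from $A_F\Li$ throughout the atomistic region $\{0,\dots,K-1\}$ (where it uses the full next-nearest-neighbor stencil), not merely by rank-$O(1)$ interface corrections; this does not affect the proof since the actual estimate is supplied by the cited lemma rather than by this remark.
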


\subsection{Spectral properties of $\Lqcf$ in $\Us^{1,2}$}
\label{sec:qcf:U12}
In our analysis below, particularly in Sections \ref{sec:P_QCL_U2inf}
and \ref{sec:P_QCF_U1inf}, we will see that the preconditioner $\Lqcl=A_F\Li$
is a promising candidate for the efficient solution of the QCF system.
The operator $\Li^{1/2}$ can be understood as a basis
transformation to an orthonormal basis in $\Us^{1,2}$. Hence, it will
be useful to study the spectral properties of $\Lqcf$ in that
space. The relevant (generalized) eigenvalue problem is
\begin{equation}
  \label{eq:defn_U12-evals}
  \Lqcf v = \lambda \Li v,\qquad v\in\Us,
\end{equation}
which can, equivalently, be written as
\begin{equation}\label{eigv}
  \Li^{-1} \Lqcf v = \lambda v, \qquad v \in \Us,
\end{equation}
or as
\begin{equation}\label{eigw}
  \Li^{-1/2} \Lqcf \Li^{-1/2} w = \lambda w,\qquad w\in\Us,
\end{equation}
with the basis transform $w = \Li^{1/2} v$, in either case reducing it
to a standard eigenvalue problem in $\ell^2_\eps$.
%
Since $\Li$ and $\Li^{1/2}$ commute, Lemma \ref{th:samespec}
immediately yields the following result.
\begin{lemma}
  \label{th:spec_U12}
  For all $N \geq 4,\ 1 \leq K \leq N-2$ the operator $\Li^{-1}\Lqcf$
  is diagonalizable and its spectrum is identical to the spectrum of
  $\Li^{-1}\Lqnl$.
\end{lemma}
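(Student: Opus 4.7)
The plan is to leverage Lemma \ref{th:samespec} almost directly, together with the trivial fact that any two functions of $\Li$ commute. Starting from the identity $\Lqcf = \Li^{-1}\Lqnl\Li$, I would left-multiply by $\Li^{-1}$ and rewrite the result as
\begin{equation*}
\Li^{-1}\Lqcf \;=\; \Li^{-2}\Lqnl\Li \;=\; \Li^{-1}\bigl(\Li^{-1}\Lqnl\bigr)\Li.
\end{equation*}
This exhibits $\Li^{-1}\Lqcf$ as the conjugate of $\Li^{-1}\Lqnl$ by the invertible operator $\Li$, so the two operators have identical spectra, which gives one half of the statement at once.

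For diagonalizability, I would use exactly the same observation one more time. Since $\Li^{1/2}$ commutes with $\Li^{\pm 1/2}$, the identity
\begin{equation*}
\Li^{-1}\Lqnl \;=\; \Li^{-1/2}\bigl(\Li^{-1/2}\Lqnl\Li^{-1/2}\bigr)\Li^{1/2}
\end{equation*}
presents $\Li^{-1}\Lqnl$ as similar to $\Li^{-1/2}\Lqnl\Li^{-1/2}$. Because $\Lqnl$ is the Hessian of the QNL energy functional at $y^F$ it is symmetric, and $\Li^{-1/2}$ is symmetric, so the inner factor is symmetric on $\ell^2_\eps$ and therefore diagonalizable by an orthonormal basis. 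Similarity preserves diagonalizability, so $\Li^{-1}\Lqnl$ is diagonalizable, and composing this similarity with the one from the first step shows the same for $\Li^{-1}\Lqcf$.

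There is no real obstacle: once Lemma \ref{th:samespec} is available, the entire argument is one line of operator arithmetic exploiting commutativity of functions of $\Li$. The only point that requires a moment of care is the symmetry of $\Lqnl$ used in the diagonalizability step; this is implicit in Proposition \ref{th:stab_qnl} (which treats $\langle\Lqnl u,u\rangle$) and follows from its origin as a second derivative of the QNL energy.
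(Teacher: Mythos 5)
Your argument is correct and is essentially the paper's proof spelled out in full: the paper likewise obtains the result from Lemma~\ref{th:samespec} together with the commutativity of $\Li$ and $\Li^{1/2}$, passing through the symmetric operator $\Li^{-1/2}\Lqnl\Li^{-1/2}$ (cf.\ the eigenvalue problems \eqref{eigv} and \eqref{eigw}). Your identification of the symmetry of $\Lqnl$ as the step carrying the diagonalizability is exactly the point the paper leaves implicit.
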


We gave a proof in \cite{qcf.iterative} of the following lemma, which
completely characterizes the spectrum of $\Li^{-1} \Lqnl$, and thereby
also the spectrum of $\Li^{-1}\Lqcf$. We denote the spectrum of
$\Li^{-1}\Lqnl$ (and $\Li^{-1} \Lqcf$) by $\{ \mu_j^{\rm qnl} : j = 1,
\dots, 2N-1\}$.

\begin{lemma}
  \label{th:spec_Lqnl_U12}
  Let $K \leq N-2$ and $A_F > 0$, then the (unordered) spectrum of
  $\Li^{-1}\Lqnl$ (that is, the $\Us^{1,2}$-spectrum) is given by
  \begin{align*}
    \mu_j^{\rm qnl} =
    \begin{cases}
      A_F - {4 \phi_{2F}''} \sin^2\big(\smfrac{j \pi}{4 K+4}\big), &
      j = 1, \dots, 2K+1, \\
      A_F, & j = 2K+2, \dots, 2N-1.
    \end{cases}
  \end{align*}
  In particular, if $\phi_{2F}'' \leq 0,$ then
  \begin{displaymath}
    \frac{\max_j \mu_j^{\rm qnl}}{\min_j \mu_j^{\rm qnl}}
    = 1 - \frac{4 \phi_{2F}''}{A_F}
      \sin^2\left(\smfrac{(2K+1)\pi}{4K+4}\right)
      =\frac{\phi_F''}{A_F}+\frac{4 \phi_{2F}''}{A_F}
      \sin^2\left(\smfrac{\pi}{4K+4}\right)
   = \frac{\phi_F''}{A_F} + O(K^{-2}).
  \end{displaymath}
\end{lemma}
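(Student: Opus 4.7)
By Lemma~\ref{th:spec_U12}, it is enough to compute the $\Us^{1,2}$-spectrum of $\Li^{-1}\Lqnl$. The operator $\Li^{-1}\Lqnl$ is self-adjoint with respect to the inner product $\langle v,w\rangle_{\Us^{1,2}} := \langle v',w'\rangle = \langle \Li v, w\rangle$, since $\langle \Li^{-1}\Lqnl v, w\rangle_{\Us^{1,2}} = \langle \Lqnl v, w\rangle$ and $\Lqnl$ is symmetric in $\ell^2_\eps$; so its eigenvalues are the critical values of the Rayleigh quotient $R(v) := \langle \Lqnl v, v\rangle / \|v'\|_{\ell^2_\eps}^2$ on $\Us\setminus\{0\}$.

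The first task is to derive the $\Lqnl$-analogue of~\eqref{eq:Eq_second_diff_form}. Applying the polarisation identity $(v_\ell' + v_{\ell+1}')^2 = 2(v_\ell'^2 + v_{\ell+1}'^2) - \eps^2 (v_\ell'')^2$ to every atomistic next-neighbour bond in $\Eqnl$, and collapsing the remaining nearest-neighbour contributions via $\phi_F'' + 4\phi_{2F}'' = A_F$, I expect to obtain
\[
\langle \Lqnl v, v\rangle = A_F \|v'\|_{\ell^2_\eps}^2 - \phi_{2F}'' \eps^2 \sum_{\ell=-K}^{K} \eps\,(v_\ell'')^2 .
\]
Non-negativity of the second term matches Proposition~\ref{th:stab_qnl}. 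Every eigenvalue of $\Li^{-1}\Lqnl$ is therefore of the form $\mu = A_F - \phi_{2F}''\eps^2 \nu$, where $\nu \geq 0$ is a critical value of the partial quotient $R_A(v) := \sum_{\ell=-K}^{K}\eps(v_\ell'')^2 / \|v'\|_{\ell^2_\eps}^2$.

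The crux is the reduction of $R_A$. Setting $u := v' \in \R^{2N}$ (which is mean-zero because $v \in \Us$), the numerator becomes $\eps^{-1}\sum_{\ell=-K}^{K}(u_{\ell+1}-u_\ell)^2$, depending only on the restriction $u|_{\{-K,\dots,K+1\}}$, a block of $2K+2$ sites. Writing out the Euler--Lagrange equations with a Lagrange multiplier for $\eps\sum u_j = 0$, one finds that $u$ must be constant outside the block; passing to the deviation $w := u - \alpha$ on the block then leaves the standard Neumann (free-boundary) Laplacian eigenvalue problem on $2K+2$ sites, whose spectrum is
\[
\nu_k = \frac{4}{\eps^2}\sin^2\!\left(\frac{k\pi}{4K+4}\right), \qquad k = 0,1,\dots,2K+1.
\]
The constant Neumann mode $k=0$, together with the modes of $u$ supported off the block, contributes the $\nu=0$ eigenspace of dimension $2N-2K-2$ and thus $\mu = A_F$ with that multiplicity; the non-constant Neumann modes $k = 1,\dots,2K+1$ give the remaining $2K+1$ eigenvalues $\mu_k = A_F - 4\phi_{2F}''\sin^2(k\pi/(4K+4))$, matching the statement.

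Finally, the ratio formula is a short trigonometric computation: $\min_j \mu_j^{\rm qnl} = A_F$, and using $\sin((2K+1)\pi/(4K+4)) = \cos(\pi/(4K+4))$ together with $\phi_F'' = A_F - 4\phi_{2F}''$ gives
\[
\frac{\max_j \mu_j^{\rm qnl}}{\min_j \mu_j^{\rm qnl}} = 1 - \frac{4\phi_{2F}''}{A_F}\cos^2\!\left(\frac{\pi}{4K+4}\right) = \frac{\phi_F''}{A_F} + \frac{4\phi_{2F}''}{A_F}\sin^2\!\left(\frac{\pi}{4K+4}\right) = \frac{\phi_F''}{A_F} + O(K^{-2}).
\]
The main obstacle is the quadratic form identity: correctly tracking which next-neighbour bonds in $\Eqnl$ are atomistic and split by polarisation into second-difference contributions, versus those already in halved nearest-neighbour form in the continuum region, so that the summation range in the partial term is precisely $\{-K,\dots,K\}$. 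Once this is in place, the reduction to the Neumann Laplacian and the closing trigonometric manipulations are routine.
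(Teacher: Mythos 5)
The paper itself does not prove Lemma~\ref{th:spec_Lqnl_U12} --- it states ``We gave a proof in \cite{qcf.iterative} of the following lemma'' and leaves it there --- so your argument is a standalone one rather than a reproduction. That said, the approach you take (symmetrize $\Li^{-1}\Lqnl$ with respect to the $\Us^{1,2}$ inner product, derive a decomposition of $\langle\Lqnl v,v\rangle$ in the spirit of \eqref{eq:Eq_second_diff_form}, and reduce the Rayleigh quotient to a discrete Neumann Laplacian on the interface block) is the natural one and reaches the correct answer, so the proof is essentially sound. The quadratic-form identity
\[
\langle \Lqnl v, v\rangle = A_F\|v'\|_{\ell^2_\eps}^2 - \phi_{2F}''\,\eps^3\!\!\sum_{\ell=-K}^{K}(v_\ell'')^2
\]
does hold once the boundary conventions of $\Eqnl$ (the clause $\phi(y_{-N}')=\phi(y_{N+1}')=0$) are accounted for, and the summation range $\{-K,\dots,K\}$ is exactly right: it is precisely the $2K{+}1$ atomistic next-nearest-neighbour bonds that contribute a $-\eps^2\phi_{2F}''(v_\ell'')^2$ term after polarisation. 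Multiplicity bookkeeping ($2K{+}1$ Neumann modes plus $2N{-}2K{-}2$ kernel modes equals $2N{-}1$) and the trigonometric endgame are also correct.

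One step is stated loosely. After setting $u=v'$ and writing the Lagrangian $L = \sum_{\ell=-K}^{K}(u_{\ell+1}-u_\ell)^2 - \nu\sum_\ell u_\ell^2 - \mu\sum_\ell u_\ell$, the first-order conditions at a site $\ell$ outside the block give $-2\nu u_\ell - \mu = 0$; summing the conditions over \emph{all} $\ell$ and using that the Neumann form annihilates constants together with $\sum_\ell u_\ell = 0$ forces $\mu = 0$, so for $\nu\neq 0$ you get $u_\ell = 0$ outside the block, not merely ``constant''. The Neumann eigenproblem on the block then emerges directly with $u|_W$ as the eigenvector, and the nonzero modes automatically satisfy the mean-zero constraint because they are orthogonal to the constant on the block. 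Your ``pass to the deviation $w := u - \alpha$'' manoeuvre is therefore unnecessary (and, as written, would change the denominator of the Rayleigh quotient); the constant $\alpha$ must in fact be zero. This is a cosmetic slip --- the spectral count and the eigenvalues $4\sin^2\!\big(\smfrac{k\pi}{4K+4}\big)$, $k=0,\dots,2K+1$, come out as you claim, and the final ratio computation using $\sin\!\big(\smfrac{(2K+1)\pi}{4K+4}\big)=\cos\!\big(\smfrac{\pi}{4K+4}\big)$ and $\phi_F'' = A_F - 4\phi_{2F}''$ is correct.
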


We conclude this study by stating a result on the condition number of
the matrix of eigenvectors for the eigenvalue
problem~\eqref{eigw}. Letting $\tilde{V}$ be an orthogonal matrix of
eigenvectors of $\Li^{-1/2} \Lqnl \Li^{-1/2}$ and $\tilde\Lambda$ the
corresponding diagonal matrix, then Lemma \ref{th:samespec} yields
\begin{align*}
  \Li^{-1/2} \Lqcf \Li^{-1/2} =~& \Li^{-1} \big[ \Li^{-1/2} \Lqnl
  \Li^{-1/2} \big] \Li  \\
  =~&  (\tilde{V}^T \Li)^{-1} \tilde{\Lambda} (\tilde{V}^T \Li).
\end{align*}
Clearly, $\cond(\tilde{V}^T \Li) = O(N^2)$, which gives the following result.

\begin{lemma}
  \label{th:qcf:condP_U02w}
  If $A_F > 0,$ then there exists a matrix $\Wqcf$ of eigenvectors for
  the preconditioned force-based QC operator $\Li^{-1/2} \Lqcf
  \Li^{-1/2}$, such that ${\cond}(\Wqcf) = O(N^2)$ as $N \to \infty$.
\end{lemma}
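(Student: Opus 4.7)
The plan is to convert the already-established similarity $\Lqcf = \Li^{-1}\Lqnl\Li$ (Lemma~\ref{th:samespec}) into an explicit diagonalization of the symmetrized preconditioned operator $\Li^{-1/2}\Lqcf\Li^{-1/2}$, and then read off the condition number of the eigenvector matrix from $\cond(\Li)$. Concretely, I would first multiply the identity of Lemma~\ref{th:samespec} on both sides by $\Li^{-1/2}$ to obtain
\begin{equation*}
  \Li^{-1/2}\Lqcf\Li^{-1/2}
  = \Li^{-1}\bigl[\Li^{-1/2}\Lqnl\Li^{-1/2}\bigr]\Li.
\end{equation*}
Because $\Lqnl$ is derived from an energy it is symmetric in $\ell^2_\eps$, and $\Li^{-1/2}$ is symmetric positive definite (as $\Li$ is), so $\Li^{-1/2}\Lqnl\Li^{-1/2}$ is symmetric in $\ell^2_\eps$ and therefore admits a spectral decomposition $\tilde V\tilde\Lambda\tilde V^T$ with $\tilde V$ $\ell^2_\eps$-orthogonal.

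Substituting this decomposition yields
\begin{equation*}
  \Li^{-1/2}\Lqcf\Li^{-1/2}
  = (\Li^{-1}\tilde V)\,\tilde\Lambda\,(\tilde V^T\Li),
\end{equation*}
and since $(\Li^{-1}\tilde V)^{-1} = \tilde V^T\Li$, the matrix $\Wqcf := \Li^{-1}\tilde V$ is a basis of eigenvectors for $\Li^{-1/2}\Lqcf\Li^{-1/2}$. (The assumption $A_F>0$ ensures, via Proposition~\ref{th:stab_qnl} and Lemma~\ref{th:samespec}, that the eigenvalues are nonzero, so the decomposition is genuine.)

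I would then estimate the condition number by submultiplicativity and orthogonality of $\tilde V$:
\begin{equation*}
  \cond(\Wqcf)
  = \|\Li^{-1}\tilde V\|_{\Us^{0,2}}\,\|\tilde V^T\Li\|_{\Us^{0,2}}
  \leq \|\Li^{-1}\|_{\Us^{0,2}}\,\|\Li\|_{\Us^{0,2}}
  = \cond(\Li).
\end{equation*}
The standard eigenvalue calculation for the discrete Dirichlet Laplacian on $\Us$ gives extreme eigenvalues $\lambda^{\Li}_1 \sim \pi^2$ and $\lambda^{\Li}_{2N-1} \sim 4\eps^{-2}$, hence $\cond(\Li) = O(N^2)$ as $N\to\infty$, which yields the claim.

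The only subtle point, and thus the main thing to check carefully, is that $\tilde V$ is truly $\ell^2_\eps$-orthogonal (not merely $\ell^2$-orthogonal) so that $\|\Li^{-1}\tilde V\|_{\Us^{0,2}}=\|\Li^{-1}\|_{\Us^{0,2}}$ and similarly for $\tilde V^T\Li$. This amounts to noting that $\Li^{-1/2}\Lqnl\Li^{-1/2}$ is self-adjoint with respect to the $\ell^2_\eps$ inner product, which is immediate once one writes out the definition of $\<\cdot,\cdot\>$ and uses symmetry of $\Lqnl$ and $\Li^{-1/2}$; everything else is a direct computation, so I do not expect substantive difficulty beyond this bookkeeping.
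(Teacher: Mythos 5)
Your argument is exactly the paper's: both multiply the identity $\Lqcf=\Li^{-1}\Lqnl\Li$ of Lemma~\ref{th:samespec} by $\Li^{-1/2}$ on both sides (using that $\Li$ and $\Li^{\pm 1/2}$ commute), diagonalize the $\ell^2_\eps$-symmetric operator $\Li^{-1/2}\Lqnl\Li^{-1/2}$ with an $\ell^2_\eps$-orthogonal $\tilde V$, identify $\Li^{-1}\tilde V$ as the eigenvector matrix, and bound its condition number by $\cond(\Li)=O(N^2)$. The only addition you make is spelling out the Dirichlet Laplacian eigenvalue estimate and the $\ell^2_\eps$-orthogonality bookkeeping, which the paper leaves implicit.
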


\section{Linear Stationary Iterative Methods}
\label{lin.stat}
In this section, we investigate linear stationary iterative methods to
solve the linearized QCF equations~\eqref{qcf}. These are iterations
of the form
\begin{equation}\label{stat.method}
  P\big(u^{(n)}-u^{(n-1)}  \big)=\alpha r^{(n-1)},
\end{equation}
where $P$ is a nonsingular preconditioner, the step size parameter
$\alpha>0$ is constant (that is, stationary), and the residual is
defined as
\begin{displaymath}
r^{(n)}:=f-\Lqcf u^{(n)}.
\end{displaymath}

The iteration error
\begin{displaymath}
e ^{(n)}:=\ub^{\rm qcf}-u^{(n)}
\end{displaymath}
satisfies the recursion
\begin{displaymath}
Pe ^{(n)}=\big(P-\alpha \Lqcf\big)e ^{(n-1)},
\end{displaymath}
or equivalently,
\begin{equation}\label{rec}
  e ^{(n)}=\big(I-\alpha P^{-1}\Lqcf\big) e ^{(n-1)}=: Ge ^{(n-1)},
\end{equation}
where the operator $G=I-\alpha P^{-1}\Lqcf:\Us\to\Us$ is called the
{\em iteration matrix}.  By iterating~\eqref{rec}, we obtain that
\begin{equation}\label{Gn}
e ^{(n)}=\big(I-\alpha P^{-1}\Lqcf\big)^n e ^{(0)}=G^ne ^{(0)}.
\end{equation}

Before we investigate various preconditioners, we briefly review the
classical theory of linear stationary iterative methods~\cite{saad}.
We see from~\eqref{Gn} that the iterative method \eqref{stat.method}
converges for every initial guess $u^{(0)}\in\Us$ if and only if $G^n
\to 0$ as $n\to\infty.$ For a given norm $\|v\|$, for $v\in\Us,$ we
can see from \eqref{Gn} that the reduction in the error after $n$
iterations is bounded above by
\begin{equation*}
\|G^n\|=\sup_{e^{(0)}\in\Us}\frac{\|e^{(n)}\|}{\|e^{(0)}\|}.
\end{equation*}

It can be shown~\cite{saad} that the convergence of the iteration for
every initial guess $u^{(0)}\in\Us$ is equivalent to the condition
$\rho(G)<1,$ where $\rho(G)$ is the {\em spectral radius} of $G$,
\begin{displaymath}
\rho(G) = \max\left\{|\lambda_i| : \lambda_i \text{ is
an eigenvalue of } G\right\}.
\end{displaymath}
In fact, the Spectral Radius Theorem~\cite{saad} states that
\begin{equation*}
\lim_{n\to\infty}\|G^n\|^{1/n}=\rho(G)
\end{equation*}
for any vector norm on $\Us.$ However, if $\rho(G) < 1$ and $\|G\|\ge
1,$ the Spectral Radius Theorem does not give any information about
how large $n$ must be to obtain \mbox{$\|G^n\|\le 1.$} On the other hand, if
$\rho(G) < 1,$ then there exists a norm $\|\cdot\|$ such that
$\|G\|<1$, so that $G$ itself is a contraction~\cite{keller}. In this
case, we have the stronger contraction property that
\begin{displaymath}
  \|e^{(n)}\|\le \|G\| \|e ^{(n-1)}\|\le \|G\|^n \|e ^{(0)}\|.
\end{displaymath}

In the remainder of this section, we will analyze the norm of the
iteration matrix, $\|G\|,$ for several preconditioners $P,$ using
appropriate norms in each case.

\section{The Richardson Iteration ($P=I$)}
\label{sec:richardson}
The simplest example of a linear iterative method is the Richardson
iteration, where $P = I$. If follows from Lemma \ref{th:qcf:cond_U02}
that there exists a similarity transform $S$ such that
\begin{equation}
  \label{simil}
  \Lqcf = S^{-1} \Lamqcf S,
\end{equation}
where $\cond(S) \leq C$ (where $C$ is independent of $N$), and
$\Lamqcf$ is the diagonal matrix of $\Us^{0,2}$-eigenvalues
$(\lqnl_j)_{j = 1}^{2N-1}$ of $\Lqcf$. As an immediate consequence, we
obtain the identity
\begin{displaymath}
  \Gid = I-\alpha \Lqcf = S^{-1} \big(I-\alpha \Lamqcf \big)S,
\end{displaymath}
where yields
\begin{equation}
  \label{est2}
  \| \Gid \|_{\ell^2_\eps} \leq \cond(S) \| I - \alpha \Lamqcf
  \|_{\ell^2_\eps}
  \leq C \max_{j = 1, \dots, 2N-1} \big| 1 - \alpha \lqnl_j \big|.
\end{equation}
If $A_F > 0$, then it follows from Proposition \ref{th:stab_qnl} that
$\lqnl_j > 0$ for all $j$, and hence that the iteration matrix
$\Gid:=I-\alpha \Lqcf$ is a contraction in the
$\|\cdot\|_{\ell^2_\eps}$ norm if and only if
$0<\alpha<\alphamax:=2/\lqnl_{2N-1}.$ It follows from
Lemma~\ref{qnlcond} that $\alphamax\le (2\eps^2)/\phi_F''.$

We can minimize the contraction constant for $\Gid$ in the $\|v\|_{S^T S}$
norm by choosing
$
\alpha = \alphaopt:=2/({\lqnl_{1}+\lqnl_{2N-1}}),
$
and in this case we obtain from Lemma~\ref{qnlcond} that
\begin{equation*}
  \big\|G_{\rm id}\big(\alphaopt\big)\big\|_{\ell^2_\eps}
  \leq C \frac{\lqnl_{2N-1}-\lqnl_{1}}{\lqnl_{2N-1}+\lqnl_{1}}
  \le C \bigg( 1-\frac{2A_F\eps^2} {\phi_{F}''} \bigg).
\end{equation*}
It thus follows that the contraction constant for $\Gid$ in the
$\|\cdot\|_{\ell^2_\eps}$ norm is only of the order
$1-\text{O}(\eps^2),$ even with an optimal choice of $\alpha.$ This is
the same generic behavior that is typically observed for Richardson
iterations for discretized second-order elliptic differential
operators.

\subsection{Numerical example for the Richardson Iteration}In Figure~\ref{fig:stat_nop}, we plot the error in the Richardson iteration
against the iteration number. As a typical example, we use the right-hand side
\begin{equation}
  \label{eq:num_ex_rhs}
  f(x) = h(x) \cos(3 \pi x) \quad \text{where} \quad
  h(x) = \begin{cases}
    \ \ 1, & x \geq 0, \\
    -1, & x < 0,
  \end{cases}
\end{equation}
which is smooth in the continuum region but has a discontinuity in the
atomistic region. We choose $\phi_F''=1,$ $A_F=0.5,$ and the optimal $\alpha=\alphaopt$
discussed above
(we note that $G_{\rm id}(\alphaopt)$ depends only on
$A_F/\phi_F''$ and $N,$ but $e^{(0)}$ depends on $A_F$ and
$\phi_F''$ independently)
. We observe initially a much faster convergence rate
than the one predicted because the initial residual for ~\eqref{eq:num_ex_rhs}
has a large component in the eigenspaces corresponding to the intermediate
eigenvalues $\lqnl_{j}$ for $1<j<2N-1.$  However, after a few iterations the
convergence behavior approximates the predicted rate.

\begin{figure}
  \begin{center}
    \includegraphics[width=9cm]{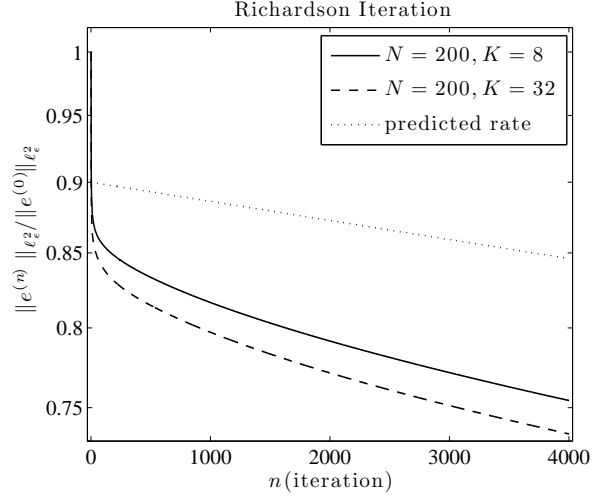}
  \end{center}
  \caption{\label{fig:stat_nop} Normalized $\ell^2_\eps$-error of
    successive Richardson iterations for the linear QCF system with $N
    = 200, K = 8,\, 32$, $\phi_F''=1,$ $A_F=0.5,$  optimal
    $\alpha=\alphaopt,$ right-hand side
    \eqref{eq:num_ex_rhs}, and starting guess $u^{(0)} = 0.$
  }
\end{figure}

\section{Preconditioning with QCL ($P=\Lqcl=A_F \Li$)}
\label{sec:precond_QCL}
We have seen in Section \ref{sec:richardson} that the Richardson
iteration with the trivial preconditioner $P = I$ converges slowly,
and with a contraction rate of the order $1 - O(\eps^2)$. The goal of
a (quasi-)optimal preconditioner for large systems is to obtain a
performance that is independent of the system size. We will show in
the present section that the preconditioner $P = A_F \Li$ (the system
matrix for the QCL method) has this desirable quality.

Of course, preconditioning with $P = A_F \Li$ comes at the cost of
solving a large linear system at each iteration. However, the QCL
operator is a standard elliptic operator for which efficient solution
methods exist. For example, the preconditioner $P = A_F \Li$ could be
replaced by a small number of multigrid iterations, which would lead
to a solver with optimal complexity. Here, we will ignore these
additional complications and assume that $P$ is inverted exactly.

Throughout the present section, the iteration matrix is given by
\begin{equation}\label{itmatrix}
  \Gqcl := I-\alpha (\Lqcl)^{-1}\Lqcf=I - \alpha (A_F \Li)^{-1} \Lqcf,
\end{equation}
where $\alpha > 0$ and $A_F = \phi_F'' + 4 \phi_{2F}'' > 0$. We will
investigate whether, if $\Us$ is equipped with a suitable topology,
$\Gqcl$ becomes a contraction. To demonstrate that this is a
non-trivial question, we first show that in the spaces $\Us^{1,p}$, $1
\leq p < \infty$, which are natural choices for elliptic operators,
this result does not hold.

\begin{proposition}
  \label{th:contM_U12}
  If $2 \leq K \leq N/2,$ $\phi_{2F}'' \neq 0,$ and $p
  \in [1, \infty),$ then for any $\alpha>0$ we have
  \begin{displaymath}
    \left\| \Gqcl \right\|_{\Us^{1,p}} \sim N^{1/p} \qquad \text{as } N \to \infty.
  \end{displaymath}
\end{proposition}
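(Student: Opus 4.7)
The plan is to reduce this bound to Lemma \ref{lpbound}, using the observation that the preconditioner $(A_F\Li)^{-1}$ is a bi-Lipschitz isomorphism between $\Us^{-1,p}$ and $\Us^{1,p}$ whose constants are \emph{independent} of $N$. Since $\Lqcf$ blows up at rate $N^{1/p}$ between these very same spaces, applying $(A_F\Li)^{-1}$ cannot tame it, and the iteration matrix inherits the blow-up.

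First I would verify the preconditioner bound. For any $v\in\Us$, let $w=(A_F\Li)^{-1}\Lqcf v$, i.e.\ $w$ is the unique element of $\Us$ satisfying $A_F\langle\Li w,z\rangle=\langle \Lqcf v,z\rangle$ for all $z\in\Us$. Summation by parts yields $\langle \Li w,z\rangle=\langle w',z'\rangle$, so
\[
\sup_{\substack{z\in\Us\\ \|z'\|_{\ell^q_\eps}=1}}\langle w',z'\rangle
 \;=\; \frac{1}{A_F}\,\|\Lqcf v\|_{\Us^{-1,p}}.
\]
Combining this identity with the two-sided equivalence \eqref{equiva} applied to $w$ gives the sandwich
\[
\frac{1}{A_F}\,\|\Lqcf v\|_{\Us^{-1,p}} \;\leq\; \|w\|_{\Us^{1,p}} \;\leq\; \frac{2}{A_F}\,\|\Lqcf v\|_{\Us^{-1,p}}.
\]

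From here I would derive the matching upper and lower bounds on $\|\Gqcl\|_{\Us^{1,p}}$. Writing $\Gqcl v = v - \alpha(A_F\Li)^{-1}\Lqcf v$ and using the triangle inequality with the right-hand sandwich estimate together with Lemma \ref{lpbound} gives
\[
\|\Gqcl v\|_{\Us^{1,p}} \;\leq\; \|v\|_{\Us^{1,p}} + \frac{2\alpha C_3}{A_F}\,N^{1/p}\|v\|_{\Us^{1,p}},
\]
so $\|\Gqcl\|_{\Us^{1,p}}\lesssim N^{1/p}$. For the lower bound, select an extremizer $v^*\in\Us$ of the bound in Lemma \ref{lpbound}, i.e.\ with $\|v^*\|_{\Us^{1,p}}=1$ and $\|\Lqcf v^*\|_{\Us^{-1,p}}\geq C_3^{-1} N^{1/p}$. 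The reverse triangle inequality combined with the left-hand sandwich estimate gives
\[
\|\Gqcl v^*\|_{\Us^{1,p}} \;\geq\; \frac{\alpha}{A_F}\,\|\Lqcf v^*\|_{\Us^{-1,p}} - 1 \;\geq\; \frac{\alpha}{A_F C_3}\,N^{1/p} - 1,
\]
which is $\gtrsim N^{1/p}$ for $N$ large.

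The main technical obstacle is really the isomorphism statement for $(A_F\Li)^{-1}$. The crux is that while the $\Us^{1,p}$ and $\Us^{-1,p}$ norms are defined via duality against \emph{all} vectors in $\R^{2N}$, the identity $\langle \Li w,z\rangle=\langle w',z'\rangle$ only provides a pairing against \emph{discrete gradients} of elements $z\in\Us$; the mean-zero restriction on such gradients is precisely what is controlled by \eqref{equiva}, which supplies the factor of $2$ on the one side and equality on the other. Once this discrete Poincaré-style duality is in place, the rest of the proof is essentially a one-line reduction to Lemma \ref{lpbound}.
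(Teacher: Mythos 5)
Your argument is correct and follows essentially the same route as the paper's: it establishes the two-sided comparison $\|\Lqcf\|_{L(\Us^{1,p},\Us^{-1,p})} \le \|(A_F\Li)^{-1}\Lqcf\|_{\Us^{1,p}} \le 2\|\Lqcf\|_{L(\Us^{1,p},\Us^{-1,p})}/A_F$ via summation by parts and the duality estimate \eqref{equiva}, then invokes Lemma~\ref{lpbound} together with the triangle and reverse-triangle inequalities. The only superficial difference is packaging the key step as a bi-Lipschitz property of the preconditioner rather than writing the chain of equalities for $\|\Li^{-1}\Lqcf\|_{\Us^{1,p}}$ directly, but the underlying computation is identical.
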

\begin{proof}
We have from \eqref{equiva} and $q=p/(p-1)$ the inequality
\begin{align*}
    \big\| \Li^{-1} \Lqcf \big\|_{\Us^{1,p}}
    =~& \max_{\substack{u \in \Us\\\|u'\|_{\ell^p_\eps} = 1}}
    \big\| \big( \Li^{-1} \Lqcf u\big)' \big\|_{\ell^p_\eps} \\
    \le~& 2\max_{\substack{u, v \in \Us\\\|u'\|_{\ell^p_\eps} = 1,\ \|v'\|_{\ell^q_\eps} = 1}}
    \Big \< \big(\Li^{-1} \Lqcf u\big)', v' \Big\> \\
    =~& 2\max_{\substack{u, v \in \Us\\\|u'\|_{\ell^p_\eps} = 1,\ \|v'\|_{\ell^q_\eps} = 1}}
    \Big \< \Li \big(\Li^{-1} \Lqcf u\big), v\Big \> \\
    =~& 2\max_{\substack{u, v \in \Us\\\|u'\|_{\ell^p_\eps} = 1,\ \|v'\|_{\ell^q_\eps} = 1}}
    \big\< \Lqcf u, v \big\> \\
    =~&  2 \big\| \Lqcf \big\|_{L(\Us^{1,p},\  \Us^{-1,p})}
  \end{align*}
  as well as the reverse inequality
  \begin{displaymath}
  \big\| \Lqcf \big\|_{L(\Us^{1,p},\  \Us^{-1,p})}\le
  \big\| \Li^{-1} \Lqcf \big\|_{\Us^{1,p}}.
  \end{displaymath}
  The result now follows from the definition of $\Gqcl$ in ~\eqref{itmatrix}, Lemma~\ref{lpbound},
  and the fact that $\alpha>0$ and $A_F > 0.$
\end{proof}

We will return to an analysis of the QCL preconditioner in the space
$\Us^{1,2}$ in Section \ref{sec:P_QCL_U12}, but will first attempt to
prove convergence results in alternative norms.

\subsection{Analysis of the QCL preconditioner in $\Us^{2,\infty}$}
\label{sec:P_QCL_U2inf}
We have found in our previous analyses of the QCF method
\cite{dobs-qcf2,doblusort:qcf.stab} that it has superior properties in
the function spaces $\Us^{1,\infty}$ and $\Us^{2,\infty}$. Hence, we
will now investigate whether $\alpha$ can be chosen such that $\Gqcl$ is a
contraction, uniformly as $N \to \infty$. In \cite{doblusort:qcf.stab},
we have found that the analysis is easiest with the somewhat unusual
choice $\Us^{2,\infty}$. Hence we begin by analyzing $\Gqcl$ in this
space.

To begin, we formulate a lemma in which we compute the operator norm
of $\Gqcl$ explicitly. Its proof is slightly technical and is therefore
postponed to Appendix~\ref{app:proofs_norm}.

\begin{lemma}
  \label{th:normM_U2inf}
  If $N \geq 4$, then
  \begin{displaymath}
   \left \| \Gqcl \right\|_{\Us^{2,\infty}}
    = \Big|1 - \alpha\big( 1 - \smfrac{2\phi_{2F}''}{A_F}\big)\Big|
    + \alpha \Big|\smfrac{2\phi_{2F}''}{A_F}\Big|.
  \end{displaymath}
\end{lemma}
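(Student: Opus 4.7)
The plan is to compute $(\Gqcl v)''$ explicitly as a sparse linear combination of the entries of $v''$ and then read off the $\ell^\infty_\eps \to \ell^\infty_\eps$ operator norm (a maximum row sum of absolute values). The critical structural observation is that $\Lqcl = A_F \Li$ and $\Li v = -v''$, so applying $\Li$ to the definition $\Gqcl v = v - \alpha (\Lqcl)^{-1}\Lqcf v$ immediately yields
\begin{equation*}
  (\Gqcl v)''_j = v''_j + \smfrac{\alpha}{A_F}(\Lqcf v)_j
\end{equation*}
for every interior $j$. The whole problem thus reduces to expressing $(\Lqcf v)_j$ in terms of $v''$.

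For $j \in \Cs$ this is immediate, since $(\Lqcf v)_j = (\Lqcl v)_j = -A_F v''_j$ gives $(\Gqcl v)''_j = (1-\alpha)v''_j$. For $j \in \As$ I would invoke the elementary discrete identity
\begin{equation*}
  \smfrac{-v_{j+2} + 2v_j - v_{j-2}}{\eps^2} = -\bigl(v''_{j-1} + 2v''_j + v''_{j+1}\bigr),
\end{equation*}
which rewrites the next-nearest-neighbor second difference appearing in $\Lat$ as a combination of ordinary centered second differences. Substituting and using $\phi_F'' + 2\phi_{2F}'' = A_F - 2\phi_{2F}''$ should then produce
\begin{equation*}
  (\Gqcl v)''_j = a\, v''_j + b\bigl(v''_{j-1} + v''_{j+1}\bigr),
\end{equation*}
where $a := 1 - \alpha(1 - \smfrac{2\phi_{2F}''}{A_F})$ and $b := -\smfrac{\alpha \phi_{2F}''}{A_F}$. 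The identity $a + 2b = 1 - \alpha$ shows that the $\Cs$-row sum $|1-\alpha| = |a+2b|$ is dominated by the $\As$-row sum $|a| + 2|b|$, so the upper bound $\|\Gqcl\|_{\Us^{2,\infty}} \leq |a|+2|b|$ follows by the triangle inequality.

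For the matching lower bound I would construct a test function realizing equality. Because the discrete Laplacian $\Li : \Us \to \R^{2N-1}$ is invertible, $v''$ may be prescribed arbitrarily. Choosing $\sigma \in \R^{2N-1}$ with $\sigma_0 = \sign(a)$, $\sigma_{\pm 1} = \sign(b)$, and $\sigma_k = 0$ otherwise, and letting $v \in \Us$ be the unique preimage under $-\Li$, gives $\|v''\|_{\ell^\infty_\eps} \leq 1$ while $(\Gqcl v)''_0 = a\sign(a) + 2b\sign(b) = |a| + 2|b|$. The index $j^\ast = 0$ lies in $\As$ because $K \geq 1$, and $N \geq 4$ together with $K \leq N-2$ guarantees that $\pm 1$ remain in the admissible range $\{-N+1,\dots,N-1\}$, so the test function is well-defined; the degenerate cases $a=0$ or $b=0$ are covered by the convention $\sign(0) = 0$. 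The principal obstacle is really just spotting the discrete identity rewriting $v_{j+2} - 2 v_j + v_{j-2}$ in terms of three consecutive centered second differences; once that reduction is made, both bounds follow by a short triangle-inequality computation and a canonical test function.
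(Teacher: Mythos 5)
Your proposal is correct and follows essentially the same route as the paper's proof: apply $\Li$ to reduce the $\Us^{2,\infty}$ operator norm to an $\ell^\infty_\eps$ row-sum computation, use the three-point identity $\eps^{-2}(v_{j+2}-2v_j+v_{j-2}) = v''_{j-1}+2v''_j+v''_{j+1}$ to express $(\Gqcl v)''$ as a stencil in $v''$, obtain the upper bound by the triangle inequality together with the observation $a+2b = 1-\alpha$, and exhibit a sign-matching test function for the lower bound. One small improvement worth noting: you correctly observe that $v \mapsto v''$ is a bijection from $\Us$ onto $\R^{2N-1}$, so the second differences may be prescribed freely, whereas the paper's proof imposes a mean-zero constraint $\sum_\ell u_\ell'' = 0$ on the test function that is not actually required for elements of $\Us$ (the mean-zero constraint applies to $v'$, not to $v''$), and this is why the paper invokes $N\ge 4$ to produce the remaining entries.
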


What is remarkable (though not necessarily surprising) about this
result is that the operator norm of $\Gqcl$ is independent of $N$ and
$K$. This immediately puts us into a position where we can obtain
contraction properties of the iteration matrix $\Gqcl$, that are uniform
in $N$ and $K$. It is worth noting, though, that the optimal
contraction rate is not uniform as $A_F$ approaches zero; that is, the
preconditioner does not give uniform efficiency as the system
approaches its stability limit.

\begin{theorem}
  \label{th:contM_U2inf}
  Suppose that $N \geq 4$, $A_F>0,$ and $\phi_{2F}'' \leq 0$, and define
  \begin{displaymath}
    \alphaoptqcl2 :=
    \frac{A_F}{A_F+2|\phi_{2F}''|}=\frac{2 A_F}{\phi_F''+A_F}
    \quad \text{and} \quad
    \alphamaxqcl2 :=
    \frac{2 A_F}{\phi_F''}.
  \end{displaymath}
  Then $\Gqcl$ is a contraction of $\Us^{2,\infty}$ if and only if $0
  < \alpha < \alphamaxqcl2$, and for any such choice the contraction
  rate is independent of $N$ and $K$. The optimal choice is $\alpha =
  \alphaoptqcl2$, which gives the contraction rate
  \begin{displaymath}
    \big\| G_{\rm qcl}\big(\alphaoptqcl2\big) \big\|_{\Us^{2,\infty}} =
    \smfrac{1 - {\frac{A_F}{\phi''_F}}}{1 + {\frac{A_F}{\phi''_F}}} < 1.
  \end{displaymath}
\end{theorem}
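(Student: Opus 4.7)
The plan is to reduce the theorem entirely to an analysis of the one-variable scalar function of $\alpha$ produced by Lemma~\ref{th:normM_U2inf}. Since that lemma already supplies the explicit identity
\[
\|\Gqcl\|_{\Us^{2,\infty}} = \bigl|1 - \alpha\bigl(1 - \tfrac{2\phi_{2F}''}{A_F}\bigr)\bigr| + \alpha\bigl|\tfrac{2\phi_{2F}''}{A_F}\bigr|,
\]
and the right-hand side is manifestly independent of $N$ and $K$, the $N,K$-uniformity claim will be immediate once we find the range of $\alpha$ for which the right-hand side is less than $1$. To streamline notation I would set $a := -2\phi_{2F}''/A_F \geq 0$, so that the right-hand side becomes $|1-\alpha(1+a)| + \alpha a$, a piecewise linear function of $\alpha$.

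Next I would split on the sign of $1 - \alpha(1+a)$. For $0 < \alpha \leq 1/(1+a)$ the absolute value unfolds to give $\|\Gqcl\|_{\Us^{2,\infty}} = 1 - \alpha$, which is strictly less than $1$ and is strictly decreasing in $\alpha$. For $\alpha > 1/(1+a)$ the norm equals $\alpha(1+2a) - 1$, which is strictly increasing in $\alpha$ and is less than $1$ precisely when $\alpha < 2/(1+2a)$. Since $A_F = \phi_F'' + 4\phi_{2F}''$ gives $1+2a = (A_F + 4|\phi_{2F}''|)/A_F = \phi_F''/A_F$, this upper endpoint is exactly $\alphamaxqcl2 = 2A_F/\phi_F''$. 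Combining the two cases, contraction holds iff $0 < \alpha < \alphamaxqcl2$.

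Because $\|\Gqcl\|_{\Us^{2,\infty}}$ is the maximum of a strictly decreasing affine piece and a strictly increasing affine piece joined continuously at $\alpha = 1/(1+a)$, the unique minimizer on $(0,\alphamaxqcl2)$ is the kink itself, giving $\alphaoptqcl2 = 1/(1+a) = A_F/(A_F + 2|\phi_{2F}''|)$. Using $A_F + 2|\phi_{2F}''| = \tfrac12(\phi_F'' + A_F)$ (again via $\phi_F'' = A_F + 4|\phi_{2F}''|$) recovers the second expression $2A_F/(\phi_F'' + A_F)$ for $\alphaoptqcl2$. The resulting optimal value of the norm is
\[
1 - \alphaoptqcl2 = \frac{a}{1+a} = \frac{2|\phi_{2F}''|}{A_F + 2|\phi_{2F}''|} = \frac{\phi_F'' - A_F}{\phi_F'' + A_F} = \frac{1 - A_F/\phi_F''}{1 + A_F/\phi_F''},
\]
which is strictly less than $1$ since $A_F > 0$, matching the stated contraction rate.

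The only potential obstacle is the bookkeeping with the three equivalent algebraic forms ($A_F$, $\phi_F''$, $|\phi_{2F}''|$) linked by $\phi_F'' = A_F + 4|\phi_{2F}''|$, which must be used to convert between the various equivalent expressions for $\alphaoptqcl2$, $\alphamaxqcl2$, and the contraction rate stated in the theorem; beyond that, the argument is an elementary analysis of a piecewise linear function and uses Lemma~\ref{th:normM_U2inf} as a black box.
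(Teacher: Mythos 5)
Your argument is correct and is essentially the paper's own proof: both split on the sign of $1-\alpha(1-2\phi_{2F}''/A_F)$, identify the two affine pieces $1-\alpha$ and $\alpha\,\phi_F''/A_F-1$, and read off the kink as the minimizer and the threshold $\alphamaxqcl2$ from the increasing piece. Your substitution $a=-2\phi_{2F}''/A_F$ is a mild notational cleanup but does not change the structure of the argument.
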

\begin{proof}
  Note that $\alphaoptqcl2 = 1 / \big(1 - \smfrac{2 \phi_{2F}''}{A_F}
  \big)$. Hence, if we assume, first, that $0 < \alpha \leq
  \alphaoptqcl2,$ then
  \begin{align*}
    \|\Gqcl\|_{\Us^{2,\infty}} = 1 - \alpha \big( 1 - 2 \smfrac{\phi_{2F}''}{A_F} \big)
    - 2 \alpha \smfrac{\phi_{2F}''}{A_F} = 1-\alpha =: m_1(\alpha).
  \end{align*}
  The optimal choice is clearly $\alpha = \alphaoptqcl2$ which gives the
  contraction rate
  \begin{displaymath}
    \big\|G_{\rm qcl}\big(\alphaoptqcl2\big)\big\|_{\Us^{2,\infty}}
    = \alphaoptqcl2 \Big| \frac{2\phi_{2F}''}{A_F}\Big|
    = \frac{2 |\phi_{2F}''|}{\phi_F'' + 2 \phi_{2F}''}
    = \smfrac{1 - {\frac{A_F}{\phi''_F}}}{
      1 + {\frac{A_F}{\phi''_F}}}.
  \end{displaymath}

  Alternatively, if $\alpha \geq \alphaoptqcl2,$ then
  \begin{displaymath}
    \big\|\Gqcl\big\|_{\Us^{2,\infty}} = \alpha \big(1 - \smfrac{4\phi_{2F}''}{A_F} \big) - 1
    = \alpha \frac{\phi_{F}''}{A_F} - 1 =: m_2(\alpha).
  \end{displaymath}
  This value is strictly increasing with $\alpha$, hence the optimal
  choice is again $\alpha = \alphaoptqcl2$.

  Moreover, we have $m_2(\alpha) < 1$ if and only if
  \begin{displaymath}
    \alpha < \frac{2 A_F}{\phi_F''} = \alphamaxqcl2.
  \end{displaymath}
  Since, for $\alpha = \alphaoptqcl2$ we have $m_1(\alpha) =
  m_2(\alpha) < 1,$ it follows that $\alphamaxqcl2 > \alphaoptqcl2$
  (as a matter of fact, the condition $\alphamaxqcl2 > \alphaoptqcl2$
  is equivalent to $A_F > 0$). In conclusion, we have shown that
  $\|\Gqcl\|_{\Us^{2,\infty}}$ is independent of $N$ and $K$ and that
  it is strictly less than one if and only if $\alpha <
  \alphamaxqcl2$, with optimal value $\alpha = \alphaoptqcl2$.
\end{proof}

As an immediate corollary, we obtain the following general convergence
result.

\begin{corollary}
  \label{th:cor_Xnorm}
  Suppose that $N \geq 4$, $A_F>0,$ $\phi_{2F}'' \leq 0$, and suppose
  that $\|\cdot\|_X$ is a norm defined on $\Us$ such that
  \begin{displaymath}
    \| u \|_X \leq C \| u \|_{\Us^{2,\infty}} \qquad \forall u \in \Us.
  \end{displaymath}
  Moreover, suppose that $0 < \alpha < \alphamaxqcl2$.  Then, for any
  $u \in \Us$,
  \begin{displaymath}
    \left\| \Gqcl^n u \right\|_X \leq \hat q^n C \|u\|_{\Us^{2,\infty}}
    \rightarrow 0 \quad \text{as } n \to \infty,
  \end{displaymath}
  where $\hat q := \|\Gqcl\|_{\Us^{2,\infty}} < 1$.

  In particular, the convergence is uniform among all $N$, $K$ and all
  possible initial values $u \in \Us$ for which a uniform bound on
  $\|u\|_{\Us^{2,\infty}}$ holds.
\end{corollary}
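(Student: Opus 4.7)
The plan is to observe that this corollary is essentially a direct consequence of Theorem \ref{th:contM_U2inf} combined with the norm-domination hypothesis. First, I would apply the hypothesis $\|u\|_X \leq C\|u\|_{\Us^{2,\infty}}$ with $u$ replaced by $\Gqcl^n u$ to obtain
\begin{equation*}
  \|\Gqcl^n u\|_X \leq C \|\Gqcl^n u\|_{\Us^{2,\infty}}.
\end{equation*}

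Next, I would invoke Theorem \ref{th:contM_U2inf}: since $0 < \alpha < \alphamaxqcl2$, the operator $\Gqcl$ is a contraction on $\Us^{2,\infty}$, and its operator norm $\hat q = \|\Gqcl\|_{\Us^{2,\infty}}$ is strictly less than one and, crucially, independent of $N$ and $K$. Iterating the sub-multiplicativity of the operator norm yields $\|\Gqcl^n u\|_{\Us^{2,\infty}} \leq \hat q^n \|u\|_{\Us^{2,\infty}}$. Combining the two bounds gives the stated inequality $\|\Gqcl^n u\|_X \leq \hat q^n C \|u\|_{\Us^{2,\infty}}$, and the convergence to $0$ follows at once from $\hat q < 1$.

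Finally, to justify the claim of uniformity, I would simply note that the constant $\hat q$ comes from Theorem \ref{th:contM_U2inf} (independent of $N, K$), the constant $C$ is fixed by the hypothesis on $\|\cdot\|_X$, and the only remaining dependence on the initial data is through $\|u\|_{\Us^{2,\infty}}$. Hence the decay rate is uniform across all choices of $N$, $K$, and initial values with a uniform bound on $\|u\|_{\Us^{2,\infty}}$. There is no real obstacle here; the work has already been done in Theorem \ref{th:contM_U2inf}, and the corollary is essentially a packaging statement that records how $\Us^{2,\infty}$-contraction transfers to weaker norms.
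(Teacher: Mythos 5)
Your proof is correct and takes essentially the same approach as the paper: dominate $\|\Gqcl^n u\|_X$ by $C\|\Gqcl^n u\|_{\Us^{2,\infty}}$, apply submultiplicativity of the operator norm together with Theorem \ref{th:contM_U2inf} to get $\|\Gqcl^n u\|_{\Us^{2,\infty}} \leq \hat q^n \|u\|_{\Us^{2,\infty}}$ with $\hat q < 1$ independent of $N$ and $K$, and conclude.
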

\begin{proof}
  We simply note that, according to Theorem \ref{th:contM_U2inf}, for
  $0 < \alpha < \alphamaxqcl2$, we have
  \begin{displaymath}
    \left\|\Gqcl^n \right\|_{\Us^{2,\infty}} \leq \hat q^n,
  \end{displaymath}
  where $\hat q := \|\Gqcl\|_{\Us^{2,\infty}} < 1$ is a number that is
  independent of $N$ and $K$. Hence, we have
  \begin{displaymath}
   \left \| \Gqcl^n u\right\|_X \leq C \left\|\Gqcl^n u \right\|_{\Us^{2,\infty}}
    \leq C \hat q^n \|u\|_{\Us^{2,\infty}}.
  \end{displaymath}
\end{proof}

\begin{remark}
  Although we have seen in Theorem \ref{th:contM_U2inf} and
  Corollary~\ref{th:cor_Xnorm} that the linear stationary method with
  preconditioner $A_F \Li$ and with sufficiently small step size
  $\alpha$ is convergent, this convergence may still be quite slow if
  the initial data is ``rough.''  Particularly in the context of
  defects, we may, for example, be interested in the convergence
  properties of this iteration when the initial residual is small or
  moderate in $\Us^{1,p}$, for some $p \in [1, \infty]$, but possibly
  of order $O(N)$ in the $\Us^{2,\infty}$-norm.  We can see from the
  following Poincar\'{e} and inverse inequalities
  \begin{displaymath}
  \|u\|_{\Us^{1,\infty}}\le \frac 12 \|u\|_{\Us^{2,\infty}}\qquad
  \text{and}\qquad \|u\|_{\Us^{2,\infty}}\le 2N \|u\|_{\Us^{1,\infty}} \qquad
  \text{for all } u\in\Us;
  \end{displaymath}
  that the application of Corollary~\ref{th:cor_Xnorm} to the case
  $X=\Us^{1,\infty}$ gives the estimate
  \begin{displaymath}
    \left\| \Gqcl^n u \right\|_{\Us^{1,\infty}} \leq  \hat q^n N \|u\|_{\Us^{1,\infty}} \qquad \text{for all } u\in\Us.
  \end{displaymath}
  Similarly, with $X = \Us^{1,2}$, we obtain
  \begin{equation}
    \label{eq:contrac_Gqcl_U12_1}
    \left\| \Gqcl^n u \right\|_{\Us^{1,2}} \leq  \hat q^n N^{3/2}
    \|u\|_{\Us^{1,2}} \qquad \text{for all } u\in\Us.
  \end{equation}

  We have seen in Proposition \ref{th:contM_U12} that a direct
  convergence analysis in $\Us^{1,p}$, $p < \infty$, may be difficult
  with analytical methods, hence we focus in the next section on the
  case $\Us^{1,\infty}$.
\end{remark}

\subsection{Analysis of the QCL preconditioner in $\Us^{1,\infty}$}
\label{sec:P_QCF_U1inf}
As before, we first compute the operator norm of the iteration matrix
explicitly. The proof of the following lemma is again postponed to the
Appendix~\ref{app:proofs_norm}.

\begin{lemma}
  \label{th:normM_U1inf}
  If $K \geq 3,\ N \geq \max(9, K+3)$, and $\phi_{2F}'' \leq
  0$, then
  \begin{equation*}
    \begin{split}
      \left\| \Gqcl \right \|_{\Us^{1,\infty}} =
      \begin{cases}
        \big|1-\alpha\big| + \alpha 4\big|\smfrac{\phi_{2F}''}{A_F}\big|
        & \hspace{-6.3mm} \text{for } 0 \leq \alpha \leq \alphaoptqclone, \\[2mm]
        \big|1 - \alpha \big( 1 - 2 \smfrac{\phi_{2F}''}{A_F} \big)\big|
        + \alpha (6 + 2 \eps - 4 \eps K) \big| \smfrac{\phi_{2F}''}{A_F} \big|
        & \text{for } \alphaoptqclone \leq \alpha,
      \end{cases}
    \end{split}
  \end{equation*}
  where
  \begin{displaymath}
    \alphaoptqclone := \Big[ 1 + (2 + \eps - 2 \eps K) \big| \smfrac{\phi_{2F}''}{A_F} \big| \Big]^{-1}
  \end{displaymath}
  satisfies $\alphaoptqcl2 \leq \alphaoptqclone \leq 1$.
\end{lemma}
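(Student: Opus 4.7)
The plan is to compute $\|\Gqcl\|_{\Us^{1,\infty}}$ by writing each component $(\Gqcl u)'_j$ as an explicit linear functional of $u'$, then maximising over $u$ and $j$. The starting point is the decomposition
\[
  \Gqcl = (1-\alpha)I - \alpha A_F^{-1}\Li^{-1} R, \qquad R := \Lqcf - \Lqcl,
\]
together with the observation that $Ru$ is supported on $\mathcal{A}$, since $\Lqcf$ agrees with $\Lqcl$ on $\mathcal{C}$. Subtracting $4$ times the nearest-neighbour Laplacian from the next-nearest-neighbour Laplacian gives the key identity
\[
  (Ru)_j = -\phi_{2F}''\,\eps^2\, u_j'''' \qquad \text{for } j \in \mathcal{A},
\]
where $u''''$ denotes the centred $5$-point fourth difference $(u_{j+2}-4u_{j+1}+6u_j-4u_{j-1}+u_{j-2})/\eps^4$.

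Next, set $w := A_F^{-1}\Li^{-1} R u$ and $q := \phi_{2F}''/A_F \le 0$, so that $(\Gqcl u)'_j = (1-\alpha) u'_j - \alpha w'_j$. Because $-w''_j$ is supported on $\mathcal{A}$ and $w_{\pm N}=0$, $w'$ is constant on each continuum interval: $w'_j = c_-$ for $-N+1 \le j \le -K$ and $w'_j = c_+$ for $K+1 \le j \le N$. Telescoping $\eps u_k'''' = u'''_{k+1}-u'''_k$ with $\eps^2 u'''_k = u'_{k+1}-2u'_k+u'_{k-1}$ gives, for $-K \le j \le K+1$,
\[
  w'_j = c_- + q\bigl[(u'_{j+1}-2u'_j+u'_{j-1}) + B\bigr], \qquad B := -u'_{-K+1}+2u'_{-K}-u'_{-K-1},
\]
and the discrete mean-zero identity $\sum_j w'_j = 0$ (equivalent to $w_N - w_{-N} = 0$) pins down $c_- = -q\Phi(u')/(2N)$, where $\Phi(u')$ is an explicit linear form in the six interface values $u'_{K+2}, u'_{K+1}, u'_K, u'_{-K-1}, u'_{-K}, u'_{-K+1}$ whose absolute-value coefficients sum to exactly $8N$.

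With these identities $(\Gqcl u)'_j = \sum_k \gamma_{j,k}(\alpha)\, u'_k$ has at most nine nonzero entries for each $j$. The supremum of $|\sum_k \gamma_{j,k} u'_k|$ over $\{u \in \Us : \|u'\|_{\ell^\infty_\eps} \le 1\}$ equals $\sum_k|\gamma_{j,k}|$, since under the hypotheses $K \ge 3$ and $N \ge \max(9,K+3)$ enough interior coordinates of $u'$ remain away from the active set to absorb the constraint $\sum_k u'_k = 0$ (coming from $u_{\pm N}=0$) without altering the extremising signs. The maximum over $j$ is attained either at an index in the far continuum --- where only the diagonal $(1-\alpha)$ and the $\Phi$-mediated interface coefficients survive, and the $8N/(2N)=4$ collapse produces the first expression $|1-\alpha|+4\alpha|\phi_{2F}''|/A_F$ --- or at an interior atomistic index $-K+3 \le j \le K-2$, where the local term $-\alpha q(u'_{j+1}-2u'_j+u'_{j-1})$ modifies the diagonal coefficient to $1-\alpha(1-2q)=1-\alpha(1+2|\phi_{2F}''|/A_F)$ and the combined local-plus-$\Phi$ coefficients at the remaining active indices assemble into the second expression. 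The transition $\alphaoptqclone$ is the value of $\alpha$ at which the two expressions coincide, and the inequalities $\alphaoptqcl2 \le \alphaoptqclone \le 1$ follow from a direct algebraic comparison. The principal obstacle is the bookkeeping in the interior-atomistic case: combining the three $B$-coefficients near $-K$ with the six $\Phi$-coefficients at both interfaces requires tracking each contribution to $O(\eps)$ precision, and one must verify that the extremising sign pattern for $u'$ is compatible with the mean-zero constraint --- which is exactly why the hypotheses $K \ge 3$ and $N \ge K+3$ are imposed.
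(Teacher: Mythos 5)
Your proof is correct and follows essentially the same overall strategy as the paper's: write $(\Gqcl u)'_\ell$ explicitly as a linear combination of the strains $u'_k$, split it into a local (tridiagonal) piece and an interface-mediated piece, bound each by the $\ell^1$ sum of its coefficients, and verify achievability by choosing extremizing signs compatible with the mean-zero constraint $\sum_k u'_k = 0$, which is exactly where the hypotheses $K \geq 3$ and $N \geq \max(9,K+3)$ enter. The one genuine difference is in how the explicit representation is obtained. The paper imports its Lemma \ref{th:L1invLqcf_representation}, built on a variational formula for $\Lqcf$ from prior work, introducing the functionals $\sigma(u')$, $\overline{\sigma(u')}$, $\tilde\alpha_{\pm K}(u')$ and the discrete Green's-function profiles $h_{\pm K,\ell}$. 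You instead observe that $R := \Lqcf - \Lqcl$ is supported on $\As$ and equals $-\phi_{2F}''\eps^2$ times a fourth difference there, then solve the discrete Poisson problem $\Li w = A_F^{-1}Ru$ directly: $w'$ is piecewise constant in $\Cs$, telescoping gives it in $\As$, and the mean-zero closure fixes the constants. This is a more self-contained and somewhat more elementary derivation of the same formula; the paper's route has the advantage of reusing machinery already established for the stability analysis of $\Lqcf$. Either way, the payoff step --- the operator norm equals $\max_\ell \sum_k |\gamma_{\ell,k}(\alpha)|$, attained separately in the far continuum (giving $|1-\alpha| + 4\alpha|\phi_{2F}''/A_F|$) and in the interior of $\As$ (giving the second expression), with $\alphaoptqclone$ the crossover --- is identical, and your identification of where the constraint-compatibility of the extremizing signs is needed matches the paper's use of the hypotheses exactly.
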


Again we note that the operator norm is independent, but now up to
terms of order O($\eps K$), of the system size.

\begin{theorem}
  \label{th:contM_U1inf}
  Suppose that $K \geq 3,\ N \geq \max(9, K+3)$, and $\phi_{2F}'' < 0$,
  then the following statements are true:
  \begin{enumerate}
  \item[(i)] If $\phi_F'' + 8 \phi_{2F}'' \leq 0,$ then $\Gqcl$ is {\em
      not} a contraction of $\Us^{1,\infty}$, for any value of
    $\alpha$.
  \item[(ii)] If $\phi_F'' + 8 \phi_{2F}'' > 0,$ then $\Gqcl$ is a
    contraction for sufficiently small $\alpha$. More precisely,
    setting
    \begin{displaymath}
      \alphamaxqclone := \frac{2 A_F}{A_F + (8 + 2\eps-4\eps K)|\phi_{2F}''|},
    \end{displaymath}
    we have that $\Gqcl$ is a contraction of $\Us^{1,\infty}$ if and
    only if $0 < \alpha < \alphamaxqclone$. The operator norm
    $\|\Gqcl\|_{\Us^{1,\infty}}$ is minimized by choosing $\alpha =
    \alphaoptqclone$ (cf. Lemma \ref{th:normM_U1inf}) and in this case
    \begin{displaymath}
      \big\| G_{\rm qcl}\big(\alphaoptqclone\big) \big\|_{\Us^{1,\infty}} = 1 - \frac{\phi_F'' + 8 \phi_{2F}''}{
        \phi_F'' + (2 - \eps + 2 \eps K) \phi_{2F}''} < 1.
    \end{displaymath}
  \end{enumerate}
\end{theorem}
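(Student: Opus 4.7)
The plan is to reduce everything to an optimization of the piecewise-explicit expression for $\|\Gqcl\|_{\Us^{1,\infty}}$ provided by Lemma~\ref{th:normM_U1inf}. To keep the algebra readable I would introduce the shorthand $\beta := -\phi_{2F}''/A_F > 0$ and observe two key identities: $\phi_F''/A_F = 1 + 4\beta$, and $\phi_F'' + 8\phi_{2F}'' = A_F(1 - 4\beta)$. Under this substitution, the piecewise formula of Lemma~\ref{th:normM_U1inf} becomes
\begin{displaymath}
  \|\Gqcl\|_{\Us^{1,\infty}}
  = \begin{cases}
      1 - \alpha(1 - 4\beta), & 0 \leq \alpha \leq \alphaoptqclone, \\
      |1 - \alpha(1+2\beta)| + \alpha(6 + 2\eps - 4\eps K)\beta, & \alphaoptqclone \leq \alpha,
    \end{cases}
\end{displaymath}
and the sign of $1-4\beta$ is exactly the sign of $\phi_F'' + 8\phi_{2F}''$, which sets up the two cases of the theorem.

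Next I would analyze the second branch. A short calculation shows $\alphaoptqclone > 1/(1+2\beta)$ whenever $K \geq 1$, so on $[\alphaoptqclone,\infty)$ the absolute value opens as $|1-\alpha(1+2\beta)| = \alpha(1+2\beta)-1$, and the whole expression collapses to the affine function $\alpha[1 + (8+2\eps-4\eps K)\beta] - 1$, which is strictly increasing in $\alpha$. Hence on this branch the norm is minimized at $\alpha = \alphaoptqclone$, and the condition $\|\Gqcl\|_{\Us^{1,\infty}} < 1$ translates directly into $\alpha < 2/[1+(8+2\eps-4\eps K)\beta]$; rewriting this bound in terms of $A_F$ and $\phi_{2F}''$ yields exactly the stated $\alphamaxqclone$.

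To prove (i), assume $\phi_F'' + 8\phi_{2F}'' \leq 0$, i.e.\ $\beta \geq 1/4$. Then on the first branch the coefficient $1-4\beta \leq 0$, so $\|\Gqcl\|_{\Us^{1,\infty}} = 1 + \alpha(4\beta-1) \geq 1$ for every $\alpha$ in the first regime. For the second regime I would evaluate the (continuous) piecewise norm at the junction $\alpha=\alphaoptqclone$, obtaining after simplification $(6+\eps-2\eps K)\beta/[1+(2+\eps-2\eps K)\beta]$, and note this is $\geq 1$ exactly when $4\beta \geq 1$; combined with monotonicity on the second branch this shows $\|\Gqcl\|_{\Us^{1,\infty}} \geq 1$ for all $\alpha > 0$.

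For (ii), $\beta < 1/4$, so on the first branch the norm is strictly decreasing in $\alpha$, and on the second branch strictly increasing; continuity at $\alphaoptqclone$ (verified in the previous step) then forces the global minimum to occur at $\alpha = \alphaoptqclone$. Substituting $\beta = -\phi_{2F}''/A_F$ into the expression $(6+\eps-2\eps K)\beta/[1+(2+\eps-2\eps K)\beta]$ and simplifying by the two identities above yields precisely the closed form $1 - (\phi_F''+8\phi_{2F}'')/(\phi_F'' + (2-\eps+2\eps K)\phi_{2F}'')$ claimed in the theorem. The only mild obstacle I foresee is the bookkeeping in checking $\alphaoptqclone > 1/(1+2\beta)$ and continuity at the junction; everything else is routine optimization once Lemma~\ref{th:normM_U1inf} is in hand.
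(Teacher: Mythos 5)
Your proposal is correct and follows essentially the same route as the paper: both reduce the statement to an optimization of the piecewise formula from Lemma~\ref{th:normM_U1inf}, using $\alphaoptqclone \leq 1$ to open the first absolute value, $\alphaoptqclone \geq \alphaoptqcl2$ to open the second, and monotonicity on the two branches plus continuity at the junction to locate the minimum and the contraction window. The $\beta := -\phi_{2F}''/A_F$ substitution is a harmless notational convenience; the only minor point the paper makes explicit that you leave implicit is that $\alphamaxqclone > \alphaoptqclone$ in case (ii) (which follows from $4\beta < 1$), needed to confirm the contraction interval is precisely $(0,\alphamaxqclone)$.
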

\begin{proof}
  Suppose, first, that $0 < \alpha \leq \alphaoptqclone$. Since
  $\alphaoptqclone \leq 1$ it follows that
  \begin{displaymath}
    \big\|\Gqcl\big\|_{\Us^{1,\infty}} = 1 - \alpha \frac{\phi_F'' + 8 \phi_{2F}''}{A_F},
  \end{displaymath}
  and hence $\|\Gqcl\|_{\Us^{1,\infty}} < 1$ if and only if $\phi_F'' + 8
  \phi_{2F}'' > 0$. In that case $\|\Gqcl\|_{\Us^{1,\infty}}$ is strictly
  decreasing in $(0, \alphaoptqclone]$.

  Since $\alphaoptqclone \geq \alphaoptqcl2 = (1 - 2
  \smfrac{\phi_{2F}''}{A_F})^{-1}$ we can see that
  $\|\Gqcl\|_{\Us^{1,\infty}}$ is always strictly increasing in
  $[\alphaoptqclone, +\infty)$ and hence if $\phi_F'' + 8 \phi_{2F}'' > 0$,
  then $\alpha = \alphaoptqclone$ minimizes the operator norm
  $\|\Gqcl\|_{\Us^{1,\infty}}$. Moreover, straightforward computations
  show that $\alphamaxqclone > \alphaoptqclone$ and that $\|\Gqcl\|_{\Us^{1,\infty}} <
  1$ if and only if $0 < \alpha < \alphamaxqclone$.
\end{proof}

We remark that the optimal value of $\alpha$ in $\Us^{1, \infty}$,
that is $\alpha = \alphaoptqclone$, is not the same as the optimal
value, $\alphaoptqcl2,$ in $\Us^{2,\infty}$. However, it is easy to
see that $\alphaoptqclone = \alphaoptqcl2 + O(\eps K)$, and hence,
even though $\alphaoptqcl2$ is not optimal in $\Us^{1,\infty}$ it is
still close to the optimal value.  On the other hand,
$\alphamaxqclone$ and $\alphamaxqcl2$ are not close, since, if $4\eps
K-2\eps<1,$ then
\begin{displaymath}
\alphamaxqclone \le
\frac{2A_F}{\phi_F''+3|\phi_{2F}''|}
< \frac{2A_F}{\phi_F''}
=\alphamaxqcl2.
\end{displaymath}

In summary, we have seen that the contraction property of $\Gqcl$ in
$\Us^{1,\infty}$ is significantly more complicated than in
$\Us^{2,\infty},$ and that, in fact, $\Gqcl$ is {\em not} a
contraction for all macroscopic strains $F$ up to the critical strain
$\Fcrit$.

\subsection{Analysis of the QCL preconditioner in $\Us^{1,2}$}
\label{sec:P_QCL_U12}
Even though we were able to prove uniform contraction properties for
the QCL-preconditioned iterative method in $\Us^{2,\infty}$, we have
argued above that these are not entirely satisfactory in the presence
of irregular solutions containing defects. Hence we analyzed the
iteration matrix $\Gqcl = I - \alpha(A_F\Li)^{-1} \Lqcf$ in
$\Us^{1,\infty}$, but there we showed that it is not a contraction up
to the critical load $\Fcrit$. To conclude our results for the QCL
preconditioner, we present a discussion of $\Gqcl$ in the space
$\Us^{1,2}.$

We begin by noting that it follows from \eqref{rec} that
\begin{equation*}
\begin{split}
  P^{1/2} e^{(n)} =~& P^{1/2} \Gqcl e^{(n-1)} = P^{1/2} \left(I - \alpha P^{-1} \Lqcf\right) P^{-1/2} \left(P^{1/2} e^{(n-1)}\right) \\
  =~& \left(I - \alpha P^{-1/2} \Lqcf P^{-1/2}\right) \left(P^{1/2} e^{(n-1)}\right) =: {\Gqclw} \left(P^{1/2} e^{(n-1)}\right).
\end{split}
\end{equation*}
Since $\| P^{1/2} v \|_{\ell^2_\eps} = A_F^{1/2}\| v \|_{\Us^{1,2}}$
for $v \in \Us$, it follows that $\Gqcl$ is a contraction in
$\Us^{1,2}$ if and only if ${\Gqclw}$ is a contraction in
$\ell^2_\eps$. Unfortunately, we have shown in Proposition
\ref{th:contM_U12} that $\|\Gqcl\|_{\Us^{1,2}} \sim N^{1/2}$ as $N \to
\infty$. Hence, we will follow the idea used in Section
\ref{sec:richardson} and try to find an alternative norm with respect
to which $ \Gqclw$ is a contraction.

From Lemma \ref{th:spec_U12} we deduce that there exists a similarity
transform $\tilS$ such that $\cond(\tilS) \leq N^2$, and such that
\begin{displaymath}
  \Li^{-1/2} \Lqcf \Li^{-1/2} = \tilS^{-1} \tilLam \tilS,
\end{displaymath}
where $\tilLam$ is the diagonal matrix of $\Us^{1,2}$-eigenvalues
$(\mu_j^\qnl)_{j = 1}^{2N-1}$ of $\Lqnl$. As an immediate consequence
we obtain
\begin{displaymath}
  \Gqclw = \tilS^{-1} \big(I - \smfrac{\alpha}{A_F} \tilLam\big) \tilS.
\end{displaymath}
Proceeding as in Section \ref{sec:richardson}, we would obtain that
$\| \Gqclw \|_{\ell^2_\eps} \leq O(N^2)$. Instead, we observe that
\begin{align*}
  \big\| \Gqcl u \big\|_{\tilS^T\tilS} =~&  \big\| \tilS \Gqclw u
  \big\|_{\ell^2_\eps}
  = \big\| (I - \smfrac{\alpha}{A_F} \tilLam) \tilS u
  \big\|_{\ell^2_\eps} \\
  \leq~& \big\| I - \smfrac{\alpha}{A_F} \tilLam \big\|_{\ell^2_\eps}
  \| \tilS u \|_{\ell^2_\eps}
  = \max_{j = 1, \dots,
    2N-1}\big| 1 - \smfrac{\alpha}{A_F} \mu_j^\qnl \big|
  \| u \|_{\tilS^T \tilS},
\end{align*}
that is,
\begin{equation}
  \big\| \Gqclw \big\|_{\tilS^T\tilS} \leq \max_{j = 1, \dots,
    2N-1} \big| 1 - \smfrac{\alpha}{A_F} \mu_j^\qnl \big|.
\end{equation}
%
Thus, we can conclude that ${\Gqclw}$ is a contraction in the
$\|\cdot\|_{\tilde S^T\tilde S}$-norm if and only if $0 < \alpha <
\alphamaxqclonetwo:=2 A_F / \mu_{2N-1}^{\rm qnl}$. Moreover, we obtain
the error bound
\begin{displaymath}
  \| e^{(n)} \|_{\Us^{1,2}} \leq
  \cond(\tilS)\, \tilde q^n \| e^{(0)} \|_{\Us^{1,2}}
  \leq N^2 \tilde{q^n} \| e^{(0)} \|_{\Us^{1,2}},
\end{displaymath}
where $\tilde q:=\big\| \Gqclw\big\|_{\tilde S^T \tilde S}$. This is
slightly worse in fact, than \eqref{eq:contrac_Gqcl_U12_1}, however,
we note that this large prefactor cannot be seen in the following
numerical experiment.

Moreover, optimizing the contraction rate with respect to $\alpha$
leads to the choice $\alphaoptqclonetwo:=2 A_F / (\mu_1^{\rm qnl} +
\mu_{2N-1}^{\rm qnl})$, and in this case we obtain from Lemma
\ref{th:spec_Lqnl_U12} that
\begin{displaymath}
 \tilde q = \tilde{q}_{\rm opt} :=
 \big\| {\widetilde G_{\rm qcl}\big(\alphaoptqclonetwo\big) }
 \big\|_{\tilde S^T \tilde S} = \frac{\mu_{2N-1}^{\rm qnl}
   - \mu_1^{\rm qnl}}{\mu_{2N-1}^{\rm qnl} + \mu_{1}^{\rm qnl}}
 \leq \frac{1-\frac{A_F}{\phi''_F}}  { 1+\frac{A_F}{\phi''_F}},
\end{displaymath}
where the upper bound is sharp in the limit $K \to \infty$.  It is
particularly interesting to note that the contraction rate obtained
here is precisely the same as the one in $\Us^{2,\infty}$ (cf. Theorem
\ref{th:contM_U2inf}). Moreover, it can be easily seen from Lemma
\ref{th:spec_Lqnl_U12} that $\alphaoptqclonetwo \rightarrow
\alphaoptqcl2$ as $K \to \infty$, which is the optimal stepsize
according to Theorem \ref{th:contM_U2inf}.  We further have that
$\alphamaxqclonetwo \rightarrow \alphamaxqcl2$ as $K \to \infty.$

\subsection{Numerical example for QCL-preconditioning}
We now apply the QCL-preconditioned stationary iterative method to the QCF
system with right-hand side \eqref{eq:num_ex_rhs},
$\phi_F''=1,$ $A_F=0.2$, and
the optimal value
$\alpha=\alphaoptqcl2$
(we note that $G_{\rm id}(\alphaoptqcl2)$ depends only on
$A_F/\phi_F''$ and $N,$ but $e^{(0)}$ depends on $A_F$ and
$\phi_F''$ independently).
The error for successive iterations in the
$\Us^{1,2}$, $\Us^{1,\infty}$ and $\Us^{2,\infty}$-norms are displayed
in Figure \ref{fig:stat_p}.  Even though our theory, in
this case, predicts a perfect contractive behavior only in
$\Us^{2,\infty}$ and (partially) in $\Us^{1,2}$, we nevertheless
observe perfect agreement with the optimal predicted rate also in the
$\Us^{1,\infty}$-norms. As a matter of fact, the parameters are chosen
so that case (i) of Theorem \ref{th:contM_U1inf} holds, that is,
$\Gqcl$ is {\em not} a contraction of $\Us^{1,\infty}$.  A possible
explanation why we still observe this perfect asymptotic behavior is
that the norm of $\Gqcl$ is attained in a subspace that is never
entered in this iterative process. This is also supported by the fact
that the exact solution is uniformly bounded in $\Us^{2,\infty}$ as
$N, K \to \infty$, which is a simple consequence of Proposition
\ref{th:sharp_stab_U2inf}.

\begin{figure}
  \begin{center}
    \includegraphics[width=9cm]{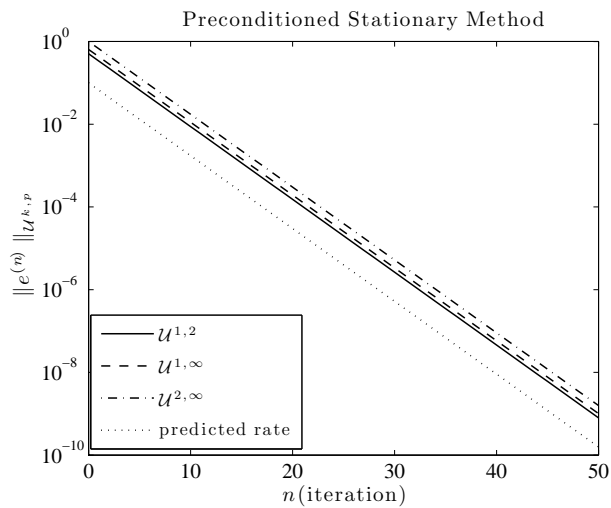}
  \end{center}
  \caption{\label{fig:stat_p} Error of the QCL-preconditioned linear
    stationary iterative method for the QCF system with $N = 800$, $K
    = 32$, $\phi_F''=1,$ $A_F=0.2$, optimal value $\alpha=\alphaoptqcl2,$ and right-hand side
    \eqref{eq:num_ex_rhs}. In this case, the iteration matrix $\Gqcl$ is
    {\em not} a contraction of $\Us^{1,\infty}$. Even though our
    theory predicts a perfect contractive behavior only in
    $\Us^{2,\infty}$, we observe perfect agreement with the optimal
    predicted rate also in the $\Us^{1,2}$ and
    $\Us^{1,\infty}$-norms.  }
\end{figure}

\section{Preconditioning with QCE ($P=\Lqce$): Ghost-Force Correction}
\label{gfc}
We have shown in \cite{Dobson:2008a,qcf.iterative} that the popular
{\em ghost force correction method (GFC)} is equivalent to
preconditioning the QCF equilibrium equations by the QCE equilibrium
equations.  The ghost force correction method in a quasi-static
loading can thus be reduced to the question whether the iteration
matrix
\begin{displaymath}
  \Gqce := I - (\Lqce)^{-1} \Lqcf
\end{displaymath}
is a contraction. Due to the typical usage of the preconditioner
$\Lqce$ in this case, we do not consider a step size $\alpha$ in this
section.  The purpose of the present section is (i) to investigate
whether there exist function spaces in which $\Gqce$ is a contraction;
and (ii) to identify the range of the macroscopic strains $F$ where
$\Gqce$ is a contraction.

We begin by recalling the fundamental stability result for the $\Lqce$
operator, Theorem \ref{th:stab_qce}:
\begin{displaymath}
  \inf_{\substack{u \in \Us \\ \|u'\|_{\ell^2_\eps} = 1}} \< \Lqce u, u\>
  = A_F + \lambda_K \phi_{2F}'',
\end{displaymath}
where $\lambda_K \sim \lambda_* + O(e^{-cK})$ with $\lambda_* \approx
0.6595$. This result shows that the GFC iteration must necessarily run
into instabilities before the deformation reaches the critical strain
$F_c^*$. This is made precise in the following corollary which states
that there is no norm with respect to which $\Gqce$ is a contraction up to
the critical strain $\Fcrit$.

\begin{corollary}
  \label{th:cor_failure_gfc}
  Fix $N$ and $K$, and let $\|\cdot\|_X$ be an arbitrary norm on the
  space $\Us$, then, upon understanding $\Gqce$ as dependent on $\phi_F''$
  and $\phi_{2F}''$, we have
  \begin{displaymath}
    \| \Gqce \|_X \to +\infty \quad
    \text{as} \quad
    A_F + \lambda_K \phi_{2F}'' \to 0.
  \end{displaymath}
\end{corollary}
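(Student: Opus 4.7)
The plan is to exploit Theorem~\ref{th:stab_qce}, which identifies $\mu := A_F + \lambda_K \phi_{2F}''$ as the smallest eigenvalue of the generalized eigenvalue problem $\Lqce v = \lambda\, \Li v$. As $\mu\to 0$, the operator $(\Lqce)^{-1}\Li$ develops an eigenvalue of magnitude $\mu^{-1}$, and the strategy is to transfer this singularity into blow-up of $\Gqce = I - (\Lqce)^{-1}\Lqcf$. Since $N$ and $K$ are fixed, $\Us$ is finite-dimensional and all induced operator norms on $L(\Us)$ are equivalent, with constants depending only on $N$ and $K$; it therefore suffices to prove divergence in one convenient operator norm, and I would use the $\Us^{1,2}$ operator norm since Theorem~\ref{th:stab_qce} is phrased naturally there.

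Select a generalized eigenvector $v_\mu \in \Us$ with $\|v_\mu\|_{\Us^{1,2}} = 1$ satisfying $\Lqce v_\mu = \mu\, \Li v_\mu$, so that $(\Lqce)^{-1}\Li v_\mu = \mu^{-1} v_\mu$, and test $\Gqce$ against the vector $u_\mu := (\Lqcf)^{-1}\Li v_\mu$. This is well-defined in the relevant regime: $\phi_{2F}'' \leq 0$ together with $\mu \to 0$ forces $A_F \to -\lambda_K \phi_{2F}''$, which is strictly positive as long as $\phi_{2F}''$ is bounded away from zero, so Proposition~\ref{th:sharp_stab_U2inf} guarantees invertibility of $\Lqcf$ in the limit and, by equivalence of norms and continuity in parameters, uniform boundedness of $(\Lqcf)^{-1}$ as an operator $\Us^{-1,2}\to\Us^{1,2}$ by some constant $C = C(N,K)$. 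By construction $\Lqcf u_\mu = \Li v_\mu$, hence
\begin{displaymath}
\Gqce u_\mu = u_\mu - (\Lqce)^{-1}\Lqcf u_\mu = u_\mu - (\Lqce)^{-1}\Li v_\mu = u_\mu - \mu^{-1} v_\mu.
\end{displaymath}

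The elementary estimate $\|\Li v_\mu\|_{\Us^{-1,2}} \leq \|v_\mu\|_{\Us^{1,2}} = 1$ (a routine summation by parts) combined with the bound on $(\Lqcf)^{-1}$ yields $\|u_\mu\|_{\Us^{1,2}} \leq C$, and the reverse triangle inequality then produces $\|\Gqce u_\mu\|_{\Us^{1,2}} \geq \mu^{-1} - C$. Thus $\|\Gqce\|_{\Us^{1,2}} \geq (\mu^{-1} - C)/C \to \infty$ as $\mu \to 0$, and equivalence of norms carries the blow-up over to any norm $\|\cdot\|_X$ on $\Us$. The only nontrivial step in this plan is the uniform boundedness of $(\Lqcf)^{-1}$ as $(\phi_F'', \phi_{2F}'')$ approaches the singular locus $\{\mu = 0\}$: this requires that $\phi_{2F}''$ remain bounded away from zero, which is the natural interpretation of the corollary since the degenerate corner $\phi_{2F}'' \to 0$ forces $\phi_F'' \to 0$, collapses both $\Lqce$ and $\Lqcf$ to $\phi_F''\Li$, and renders $\Gqce$ identically zero.
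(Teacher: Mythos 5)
Your proof is correct and fills in an argument that the paper only sketches implicitly: Corollary~\ref{th:cor_failure_gfc} is stated without proof as an ``immediate'' consequence of Theorem~\ref{th:stab_qce}. The chain you use---observe from Theorem~\ref{th:stab_qce} that $\mu := A_F + \lambda_K \phi_{2F}''$ is the smallest generalized eigenvalue of $\Lqce v = \lambda \Li v$ (since $\|u'\|_{\ell^2_\eps}^2 = \<\Li u, u\>$), pick a unit eigenvector $v_\mu$, set $u_\mu = (\Lqcf)^{-1}\Li v_\mu$ so that $\Gqce u_\mu = u_\mu - \mu^{-1}v_\mu$, control $\|u_\mu\|$ via the uniform bound on $(\Lqcf)^{-1}$ from Proposition~\ref{th:sharp_stab_U2inf}, and then invoke equivalence of norms on the fixed finite-dimensional space $\Us$---is exactly what is required, and every step checks out. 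An alternative, slightly more abstract route to the same end is to note that $(\Lqce)^{-1} = (I - \Gqce)(\Lqcf)^{-1}$, so that $\mu^{-1} = \|(\Lqce)^{-1}\|_{L(\Us^{-1,2},\Us^{1,2})} \leq (1 + \|\Gqce\|_{\Us^{1,2}})\|(\Lqcf)^{-1}\|_{L(\Us^{-1,2},\Us^{1,2})}$, giving the blow-up directly; this avoids the explicit test vector but carries the same content. Your caveat about the degenerate corner $\phi_{2F}'' \to 0$ (which would drag $\phi_F'' \to 0$ and collapse $\Gqce$ to zero) is well taken and is, in effect, ruled out by the paper's standing assumptions $\phi_F'' > 0$, $\phi_{2F}'' < 0$ and the normalization of Remark~\ref{depend}: once $\phi_F''$ is held fixed, $\mu \to 0$ forces $A_F \to -\lambda_K \phi_{2F}'' > 0$, so $\Lqcf$ remains boundedly invertible and the argument goes through unconditionally.
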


Despite this negative result, we may still be interested in the
question of whether the GFC iteration is a contraction in ``very
stable regimes,'' that is, for macroscopic strains which are far away
from the critical strain $\Fcrit$. Naturally, we are particularly
interested in the behavior as $N \to \infty$, that is, we will
investigate in which function spaces the operator norm of $\Gqce$ remains
bounded away from one as $N \to \infty$. Theorem~\ref{th:nocoerc}
on the unboundedness of $\Lqcf$ immediately provides us with the
following negative answer.

\begin{proposition}
  \label{th:gfc:contG_H1}
  If $2 \leq K \leq N/2$, $\phi_{2F}'' \neq 0$, and $A_F + \lambda_K
  \phi_{2F}'' > 0$, then
  \begin{displaymath}
    \| \Gqce \|_{\Us^{1,2}} \sim N^{1/2} \qquad \text{as } N \to \infty.
  \end{displaymath}
\end{proposition}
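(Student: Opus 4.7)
The plan is to establish matching upper and lower bounds, both of order $N^{1/2}$, by combining Lemma~\ref{lpbound} (which gives $\|\Lqcf\|_{L(\Us^{1,2},\Us^{-1,2})} \sim N^{1/2}$) with the fact that $\Lqce : \Us^{1,2} \to \Us^{-1,2}$ is both coercive and continuous uniformly in $N$. In other words, the non-boundedness of $\Lqcf$ in $L(\Us^{1,2},\Us^{-1,2})$ transfers directly to $\Gqce$, since the preconditioner is a well-conditioned elliptic operator when $A_F + \lambda_K \phi_{2F}'' > 0$.

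For the upper bound, I would write $\Gqce u = -u + (\Lqce)^{-1}\Lqcf u$ and set $w := (\Lqce)^{-1}\Lqcf u$. By Theorem~\ref{th:stab_qce} together with the symmetry of $\Lqce$,
\[
 (A_F + \lambda_K\phi_{2F}'') \|w\|_{\Us^{1,2}}^2
 \le \<\Lqce w, w\>
 = \<\Lqcf u, w\>
 \le \|\Lqcf u\|_{\Us^{-1,2}} \|w\|_{\Us^{1,2}},
\]
so Lemma~\ref{lpbound} applied with $p = 2$ gives $\|w\|_{\Us^{1,2}} \le C N^{1/2} \|u\|_{\Us^{1,2}}$, and hence $\|\Gqce\|_{\Us^{1,2}} \le 1 + C N^{1/2}$.

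For the lower bound, I would invoke the \emph{lower} estimate in Lemma~\ref{lpbound} to select $u_* \in \Us$ with $\|u_*\|_{\Us^{1,2}} = 1$ and $\|\Lqcf u_*\|_{\Us^{-1,2}} \ge c_0 N^{1/2}$. Letting $w_* := (\Lqce)^{-1}\Lqcf u_*$, so that $\Lqce w_* = \Lqcf u_*$, the continuity bound $\|\Lqce z\|_{\Us^{-1,2}} \le C' \|z\|_{\Us^{1,2}}$ (uniform in $N$) gives $\|w_*\|_{\Us^{1,2}} \ge (C')^{-1} c_0 N^{1/2}$. The reverse triangle inequality then yields
\[
 \|\Gqce u_*\|_{\Us^{1,2}} = \|w_* - u_*\|_{\Us^{1,2}} \ge \|w_*\|_{\Us^{1,2}} - 1 \gtrsim N^{1/2}
\]
for $N$ sufficiently large, completing the matching lower bound.

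The main obstacle is the $N$-independent continuity bound $\|\Lqce z\|_{\Us^{-1,2}} \le C' \|z\|_{\Us^{1,2}}$. This is not immediately visible from the pointwise formula for $(\Lqce v)_j$, which carries $1/\eps$ prefactors at the interface $j \in \{K-1,K,K+1,K+2\}$; however, after a discrete integration by parts $\<\Lqce u, v\>$ is nothing but the Hessian quadratic form of $\Eqce$ evaluated at $(u,v)$, i.e.\ a finite sum of terms $c_{jk}\, u'_j v'_k$ with $\phi''_F$- and $\phi''_{2F}$-bounded coefficients and with $|j-k|$ uniformly bounded. A single application of Cauchy--Schwarz then yields the claimed continuity bound, and the argument above closes.
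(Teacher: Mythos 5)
Your proof is correct and follows essentially the same route as the paper: both exploit the $N$-uniform equivalence of the $\Us^{1,2}$-norm and the $\Lqce$-energy norm (coercivity from Theorem~\ref{th:stab_qce} for one direction, uniform continuity of $\Lqce$ for the other) to reduce the assertion to the two-sided estimate on $\|\Lqcf\|_{L(\Us^{1,2},\,\Us^{-1,2})}$ from Lemma~\ref{lpbound}, plus a triangle inequality to account for the identity term. The only presentational difference is that you spell out the continuity of $\Lqce$ (correctly, via the Hessian bilinear form with $O(1)$ coefficients and bounded bandwidth, cf.\ \eqref{eq:Eqce_decomposition_final}) and work with an explicit near-extremal test function, whereas the paper invokes the same energy-norm duality argument used in Proposition~\ref{th:contM_U12} and treats the norm equivalence as an ``easy exercise.''
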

\begin{proof}
  It is an easy exercise to show that, if $A_F + \lambda_K \phi_{2F}''
  > 0$, then the $\Us^{1,2}$-norm is equivalent to the norm induced by
  $\Lqce$, that is,
  \begin{displaymath}
    C^{-1} \|u\|_{\Us^{1,2}} \leq \|u\|_{\Lqce}
    \leq C \|u\|_{\Us^{1,2}}.
  \end{displaymath}
  Hence, we have $\| \Gqce \|_{\Us^{1,2}} \approx \| \Gqce \|_{\Lqce}$ and by
  the same argument as in the proof of Proposition \ref{th:contM_U12},
  and using again the uniform norm-equivalence, we can deduce that
  \begin{displaymath}
    \big\| \Gqce \big\|_{\Us^{1,2}} \approx\big \| \Lqcf \big\|_{L(\Us^{1,2},\  \Us^{-1,2})} \pm 1 \sim N^{1/2}
    \quad \text{as } N \to \infty.
  \end{displaymath}
\end{proof}

Since the operator $(\Lqce)^{-1} \Lqcf$ is more
complicated than that of $(A_F \Li)^{-1} \Lqcf$, which we analyzed in
the previous section, we continue to investigate the contraction
properties of $\Gqce$ in various different norms in numerical
experiments. In Figure \ref{fig:norm_Ggfc_1}, we plot the operator
norm of $\Gqce$, in the function spaces
\begin{displaymath}
  \Us^{k,p}, \quad k = 0, 1, 2, \quad p = 1, 2, \infty,
\end{displaymath}
against the system size $N$ (see Appendix~\ref{gnorm} for a
description of how we compute $\|\Gqce\|_{\Us^{k,p}}$). This
experiment is performed for $A_F / \phi_F'' =  0.8$ which is
at some distance from the singularity of $\Lqce$ (we note that
$\Gqce$ depends only on $A_F / \phi_F''$ and $N$ since
both $(\phi''_F)^{-1}\Lqcf$ and $(\phi_F'')^{-1}\Lqce$ depend
only on $A_F/\phi_F''$ and $N$).
The experiments
suggests clearly that $\|\Gqce\|_{\Us^{k,p}} \to \infty$ as $N \to
\infty$ for all norms except for $\Us^{1,\infty}$ and $\Us^{2,1}$.
\begin{figure}
  \begin{center}
    \includegraphics[width=10cm]{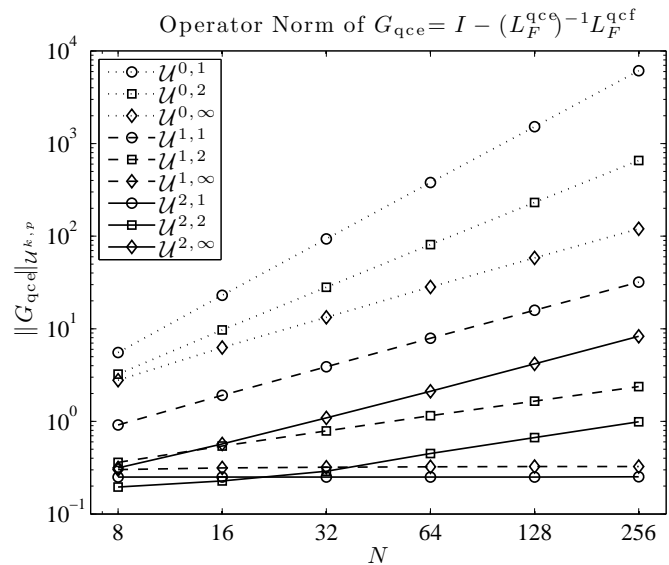}
  \end{center}
  \caption{\label{fig:norm_Ggfc_1} Graphs of the operator norm
    $\|\Gqce\|_{\Us^{k,p}}$, $k = 0,1,2$, $p = 1, 2, \infty$, plotted
    against the number of atoms, $N$, with atomistic region size $K =
    \lceil \sqrt{N} \rceil - 1$, and $A_F / \phi_F'' =  0.8$.
    (The graph for the $\Us^{1,p}$-norms, $p = 1,\infty$, are
    only estimates up to a factor of $1/2$; cf. Appendix \ref{gnorm}.)
    The graphs clearly indicate that $\|\Gqce\|_{\Us^{k,p}} \to
    \infty$ as $N \to \infty$ in all spaces except for
    $\Us^{1,\infty}$ and $\Us^{2,1}$.  }
\end{figure}

Hence, in a second experiment, we investigate how
$\|\Gqce\|_{\Us^{1,\infty}}$ and $\|\Gqce\|_{\Us^{2,1}}$ behave, for
fixed $N$ and $K$, as $A_F + \lambda_K \phi_{2F}''$ approaches
zero. The results of this experiment, which are are displayed in
Figure \ref{fig:norm_Ggfc_2}, confirm the prediction of
Corollary~\ref{th:cor_failure_gfc} that $\|\Gqce\|_{\Us^{k,p}} \to \infty$ as
$A_F + \lambda_K \phi_{2F}''$ approaches zero. Indeed, they show that
$\|\Gqce\|_{\Us^{k,p}} > 1$ already much earlier, namely around a
strain $F$ where $A_F \approx 0.52$ and $A_F + \lambda_K \phi_{2F}''
\approx 0.44$.

\begin{figure}[b]
  \begin{center}
    \includegraphics[width=8cm]{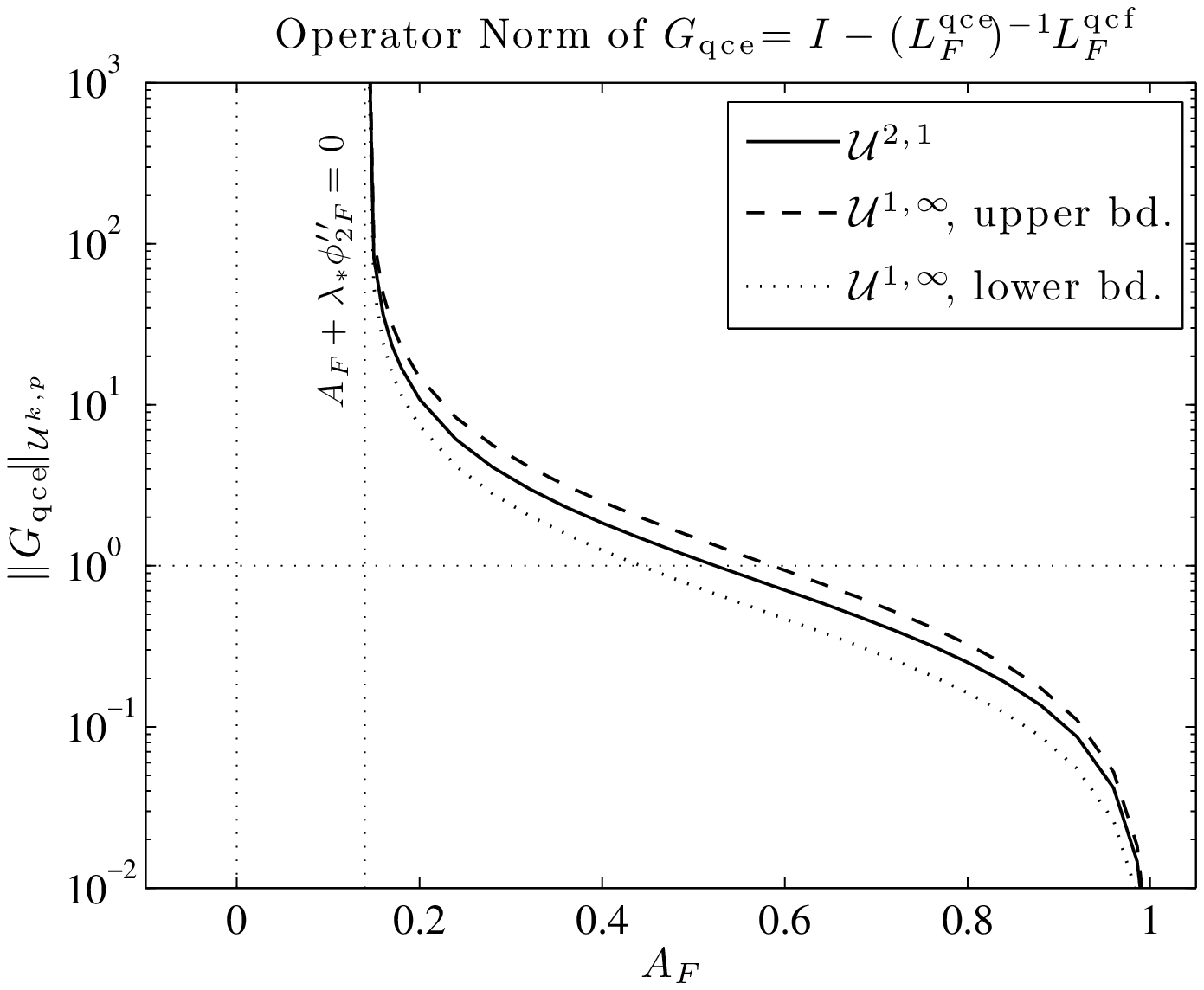}
  \end{center}
  \caption{\label{fig:norm_Ggfc_2} Graphs of the operator norm
    $\|\Gqce\|_{\Us^{k,p}}$, $(k, p) \in \{ (1,\infty), (2,1)\}$, for
    fixed $N = 256, K = 15$, $\phi_F'' = 1$, plotted against $A_F$.
    For the case $\Us^{1,\infty}$ only estimates are available and
    upper and lower bounds are shown instead (cf. Appendix
    \ref{gnorm}). The graphs confirm the result of Corollary
    \ref{th:cor_failure_gfc} that $\|\Gqce\|_{\Us^{k,p}} \to \infty$
    as $A_F + \lambda_K \phi_{2F}'' \to 0$. Moreover, they clearly
    indicate that $\|\Gqce\|_{\Us^{k,p}} > 1$ already for strains $F$
    in the region $A_F \approx 0.5$, which are much lower than the
    critical strain at which $\Lqce$ becomes singular. }
\end{figure}

Our conclusion based on these analytical results and numerical
experiments is that the GFC method is not universally reliable
near the limit strain $F_*,$ that
is, under conditions near the formation or movement of a defect
it can fail to converge to a stable solution of the QCF
equilibrium equations as the quasi-static loading step tends to zero
or the number of GFC iterations tends to infinity. Even though the
simple model problem that we investigated here cannot, of course,
provide a definite statement, it shows at the very least that further
investigations for more realistic model problems are required.

\section*{Conclusion}

We proposed and studied linear stationary iterative solution methods
for the QCF method with the goal of identifying iterative schemes that
are efficient and reliable for all applied loads. We showed that, if
the local QC operator is taken as the preconditioner, then the
iteration is guaranteed to converge to the solution of the QCF system,
up to the critical strain. What is interesting is that the choice of
function space plays a crucial role in the efficiency of the iterative
method. In $\Us^{2,\infty}$, the convergence is always uniform in $N$
and $K$, however, in $\Us^{1,\infty}$ this is only true if the
macroscopic strain is at some distance from the critical strain. This
indicates that, in the presence of defects (that is, non-smooth
solutions), the efficiency of a QCL-preconditioned method may be
reduced. Further investigations for more realistic model problems are
required to shed light on this issue.

We also showed that the popular GFC iteration must necessarily run
into instabilities before the deformation reaches the critical strain
$F_c^*$.  Even for macroscopic strains that are far lower than the
critical strain $\Fcrit,$ we show that $\| \Gqce \|_{\Us^{1,2}} \sim
N^{1/2}.$ We then give numerical experiments that suggest that
$\|\Gqce\|_{\Us^{k,p}} \to \infty$ as $N \to \infty$ for all tested
norms except for $\Us^{1,\infty}$ and $\Us^{2,1}.$

The results presented in this paper demonstrate the challenge for
the development of reliable and efficient iterative methods for
force-based approximation methods.  Further analysis and numerical experiments
for two and three dimensional problems are needed to more fully
assess the implications of the results in this paper for realistic
materials applications.

\newpage

\section{Appendix}

\subsection{Proof of Theorem \ref{th:stab_qce}}
\label{sec:app_gfc}
The purpose of this appendix is to prove the sharp stability result
for the operator $\Lqce$, formulated in Theorem \ref{th:stab_qce}.
Using Formula (23) in \cite{doblusort:qce.stab} we obtain the
following representation of $\Lqce$,
\begin{equation}
  \label{eq:Eqce_decomposition_final}
\begin{split}
  \big\<\Lqce u, u\big\> =~& \Bigg\{ \sum_{\ell = -N+1}^{-K-2} \eps A_F
  |u_\ell'|^2
  + \sum_{\ell = K+3}^N \eps A_F |u_\ell'|^2 \Bigg\} \\
  & + \Bigg\{\sum_{\ell = -K+2}^{K-1} \eps\Big(A_F |u_\ell'|^2 -
  \eps^2
  \phi_{2F}'' |u_\ell''|^2\Big)\Bigg\} \\
  & + \eps \Big\{ (A_F-\phi_{2F}'')(|u_{-K+1}'|^2 + |u_K'|^2) + A_F (|u_{-K}'|^2 + |u_{K+1}'|^2) \\
  & \hspace{2.5cm}  + (A_F+\phi_{2F}'') (|u_{-K-1}'|^2 + |u_{K+2}'|^2) \\
  & \qquad - \half \eps^2 \phi_{2F}'' (|u_{-K}''|^2 + |u_{-K-1}''|^2 +
  |u_{K}''|^2 + |u_{K+1}''|^2) \Big\}.
\end{split}
\end{equation}

If $\phi_{2F}'' < 0,$ then we can see from this decomposition that
there is a loss of stability at the interaction between atoms $-K-2$
and $-K-1$ as well as between atoms $K+1$ and $K+2$. It is therefore
natural to test this expression with a displacement $\hat u$ defined
by
\begin{displaymath}
  \hat u_\ell' = \begin{cases}1,& \ell = -K-1, \\
    -1, & \ell = K+2, \\
    0, & \text{otherwise}.
    \end{cases}
\end{displaymath}
From \eqref{eq:Eqce_decomposition_final}, we easily obtain
\begin{displaymath}
  \big\<\Lqce \hat u, \hat u \big\> = A_F + \half \phi_{2F}''.
\end{displaymath}
In particular, we see that, if $A_F + \half \phi_{2F}'' < 0,$ then
$\Lqce$ is indefinite. On the other hand, it was shown in
\cite{Dobson:2008b} that $\Lqce$ is positive definite provided $A_F +
\phi_{2F}'' > 0$. (As a matter of fact, the analysis in
\cite{Dobson:2008b} is for periodic boundary conditions, however,
since the Dirichlet displacement space is contained in the periodic
displacement space the result is also valid for the present case.)

Thus, we have shown that
\begin{displaymath}
  \inf_{\substack{u \in \Us \\ \|u'\|_{\ell^2_\eps} = 1}} \big\<\Lqce u, u \big\>
  = A_F + \mu \phi_{2F}'', \quad \text{where } \smfrac12 \leq \mu \leq 1.
\end{displaymath}
To conclude the proof of Theorem \ref{th:stab_qce}, we need to show that
$\mu$ depends only on $K$ and that the stated asymptotic result holds.

From \eqref{eq:Eqce_decomposition_final} it follows that $\Lqce$ can
be written in the form
\begin{displaymath}
 \big\<\Lqce u, u \big\> = (u')^T \mathcal{H} u',
\end{displaymath}
where we identify $u'$ with the vector $u' = (u_\ell')_{\ell =
  -N+1}^N$ and where $\mathcal{H} \in \R^{2N \times 2N}$. Writing
$\mathcal{H} = \phi_F'' \mathcal{H}_1 + \phi_{2F}'' \mathcal{H}_2,$ we
can see that $\mathcal{H}_1 = {\rm Id}$ and that $\mathcal{H}_2$ has
the entries
\begin{displaymath}
  \mathcal{H}_2 = {\footnotesize \left(
    \begin{array}{ccccccccccc}
      \ddots&\ddots&\ddots& & &  &   &    &      &      & \\
      & 1  &   2  &  1   &   &   &   &    &      &      & \\
      &    &   1  &  2   & 1 &   &   &    &      &      & \\
      &    &      &  1   &3/2&1/2&   &    &      &      & \\
      &    &      &      &1/2&3  &1/2&    &      &      & \\
      &    &      &      &   &1/2&9/2& 0  &      &      & \\
      &    &      &      &   &   &0  & 4  & 0    &      & \\
      &    &      &      &   &   &   & 0  & 4    & 0    & \\
      &    &      &      &   &   &   &    &\ddots&\ddots&\ddots
    \end{array}
  \right) }
\end{displaymath}
Here, the row with entries $[1,\, 3/2,\, 1/2]$ denotes the $K$th row (in
the coordinates $u_k'$). This form can be verified, for example, by
appealing to \eqref{eq:Eqce_decomposition_final}. Let $\sigma(A)$
denote the spectrum of a matrix $A$. Since, by assumption,
$\phi_{2F}'' \leq 0$, the smallest eigenvalue of $\mathcal{H}$ is
given by
\begin{displaymath}
  \min \sigma(\mathcal{H}) = \phi_F''
  + \phi_{2F}'' \max \sigma(\mathcal{H}_2),
\end{displaymath}
that is, we need to compute the largest eigenvalue $\bar\lambda$ of
$\mathcal{H}_2$. Since $\mathcal{H}_2 e_k = 4 e_k$ for $k = K+3,\,
K+4,\, \dots$ and for $K = -K-2, \,-K-3, \,\dots$, and since
eigenvectors are orthogonal, we conclude that all other eigenvectors
depend only on the submatrix describing the atomistic region and the
interface. In particular, $\bar\lambda$ depends only on $K$ but not on
$N$. This proves the claim of Theorem \ref{th:stab_qce} that $\lambda_K$
depends indeed only on $K$.

We thus consider the $\{-K-1, \dots, K+2\}$-submatrix
$\bar{\mathcal{H}}_2$, which has the form
\begin{displaymath}
  \bar{\mathcal{H}}_2 = {\footnotesize \left(
    \begin{array}{ccccccccc}
      9/2&1/2&   &   &   &   &   &   &    \\
      1/2& 3 &1/2&   &   &   &   &   &    \\
         &1/2&3/2& 1 &   &   &   &   &    \\
         &   & 1 & 2 & 1 &   &   &   &    \\
         &&&\ddots&\ddots&\ddots&&   &    \\
         &   &   &   & 1 & 2 & 1 &   &    \\
         &   &   &   &   & 1 &3/2&1/2&    \\
         &   &   &   &   &   &1/2& 3 &1/2 \\
         &   &   &   &   &   &   &1/2&9/2
    \end{array}
  \right). }
\end{displaymath}
Letting $\bar{\mathcal{H}}_2 \psi = \lambda \psi$, then for $\ell =
-K+2, \dots, K-1$,
\begin{displaymath}
  \psi_{\ell-1} + 2 \psi_\ell + \psi_{\ell+1} = \lambda \psi_\ell,
\end{displaymath}
and hence, $\psi$ has the general form
\begin{displaymath}
  \psi_\ell = a z^\ell + b z^{-\ell}, \qquad \ell = -K+1, \dots, K,
\end{displaymath}
leaving $\psi_\ell$ undefined for $\ell \in \{-K, -K-1, K+1, K+2\}$
for now, and where $z, 1/z$ are the two roots of the polynomial
\begin{displaymath}
  z^2 + (2-\lambda) z + 1 = 0.
\end{displaymath}
In particular, we have
\begin{equation}
  \label{eq:app:z_of_lambda}
  z = (\half \lambda - 1) + \sqrt{ (\half \lambda - 1)^2 - 1} > 1.
\end{equation}

To determine the remaining degrees of freedom, we could now insert
this general form into the eigenvalue equation and attempt to solve
the resulting problem. This leads to a complicated system which we
will try to simplify.

We first note that, for any eigenvector $\psi$, the vector $(\psi_{K -
  \ell})$ is also an eigenvector, and hence we can assume without loss
of generality that $\psi$ is skew-symmetric about $\ell = 1/2$. This
implies that $a = -b$. Since the scaling is irrelevant for the
eigenvalue problem, we therefore make the {\em ansatz} $\psi_\ell = z^\ell -
z^{-\ell}$. Next, we notice that for $K$ sufficiently large the term
$z^{-\ell}$ is exponentially small and therefore does not contribute
to the eigenvalue equation near the right interface. We may safely
ignore it if we are only interested in the asymptotics of the
eigenvalue $\bar\lambda$ as $K \to \infty$. Thus, letting
$\hat\psi_\ell = z^\ell$, $\ell = 1, \dots, K$ and $\hat\psi_\ell$
unknown, $\ell = K+1, K+2$, we obtain the system
\begin{align*}
  z^{K-1} + \smfrac32 z^K + \smfrac12 \hat\psi_{K+1} =~& \hat\lambda z^K, \\
  \smfrac12 z^K + 3 \hat\psi_{K+1} + \smfrac12 \hat\psi_{K+2} =~& \hat\lambda
  \hat\psi_{K+1}, \\
  \smfrac12 \hat\psi_{K+1} + \smfrac92 \hat\psi_{K+2} =~& \hat\lambda \hat\psi_{K+2}.
\end{align*}
The free parameters $\hat\psi_{K+1}, \hat\psi_{K+2}$ can be easily
determined from the first two equations. From the final equation we
can then compute $\hat\lambda$. Upon recalling from
\eqref{eq:app:z_of_lambda} that $\hat z$ can be expressed in terms of
$\hat\lambda$, and conversely that $\hat\lambda = (\hat z^2 + 1)/ \hat
z + 2$, we obtain a polynomial equation of degree five for $\hat z$,
\begin{displaymath}
q(\hat z) :=  4 \hat z^5 - 12 \hat z^4 + 9 \hat z^3 - 3 \hat z^2 - 4 \hat z + 2 = 0.
\end{displaymath}
Mathematica was unable to factorize $q$ symbolically, hence we
computed its roots numerically to twenty digits precision. It turns
out that $q$ has three real roots and two complex roots. The largest
real root is at $\hat z \approx 2.206272296$ which gives the value
\begin{displaymath}
  \hat\lambda = \frac{\hat z^2 + 1}{\hat z} + 2 \approx 4.659525505897.
\end{displaymath}
The relative errors that we had previously neglected are in fact of
order $\hat z^{-2K}$, and hence we obtain
\begin{equation*}
  \lambda_K = \lambda_* + O( e^{-cK} ), \qquad \text{where}
  \quad \lambda_* \approx 0.6595 \quad \text{and}
  \quad c \approx 1.5826.
\end{equation*}
This concludes the proof of Theorem \ref{th:stab_qce}.\qquad

\subsection{Proofs of Lemmas \ref{th:normM_U2inf} and \ref{th:normM_U1inf}}
\label{app:proofs_norm}
In this appendix, we prove two technical lemmas from Section
\ref{sec:P_QCL_U2inf}. Throughout, the iteration matrix $\Gqcl$ is given
by
\begin{displaymath}
  \Gqcl := I - \alpha (A_F \Li)^{-1} \Lqcf,
\end{displaymath}
where $\alpha > 0$ and $A_F = \phi_F'' + 4 \phi_{2F}'' > 0$. We begin
with the proof of Lemma \ref{th:normM_U2inf}, which is more
straightforward.

\begin{proof}[Proof of Lemma \ref{th:normM_U2inf}]
  Using the basic definition of the operator norm, and the fact that
  $\Li z = -z''$, we obtain
  \begin{equation*}
  \begin{split}
    \big\|\Gqcl\big\|_{\Us^{2,\infty}}
    = \max_{\substack{u \in \Us \\ \|u''\|_{\ell^\infty_\eps} = 1} }
    \big\| (\Gqcl u)'' \big\|_{\ell^\infty_\eps}
    = \max_{\substack{u \in \Us \\ \|u''\|_{\ell^\infty_\eps} = 1} }
    \big\| -\Li \Gqcl u \big\|_{\ell^\infty_\eps} .
    \end{split}
  \end{equation*}
  We write the operator $-\Li \Gqcl = -\Li + \smfrac{\alpha}{A_F} \Lqcf$ as
  follows:
  \begin{equation}
    \label{eq:app_co:D2_form_M}
    \big[-\Li \Gqcl u\big]_\ell =
    \begin{cases}
      u_\ell'' - \smfrac{\alpha}{A_F} \big( A_F u_\ell'' \big),
      & \text{if } \ell \in \Cs, \\[1mm]
      u_\ell'' - \smfrac{\alpha}{A_F} \big(\phi_F'' u_\ell'' +
      \phi_{2F}'' (u_{\ell-1}'' + 2 u_\ell'' + u_{\ell+1}'') \big),
      & \text{if } \ell \in \As.
    \end{cases}
  \end{equation}
  In the continuum region, we simply obtain
  \begin{displaymath}
    \big[-\Li \Gqcl u\big]_\ell = (1 - \alpha) u_\ell''
    \qquad \text{for } \ell \in \Cs.
  \end{displaymath}
  If $\ell \in \As$, we manipulate \eqref{eq:app_co:D2_form_M}, using
  the definition of $A_F = \phi_F'' + 4 \phi_{2F}''$, which yields
  \begin{align*}
    \big[-\Li \Gqcl u\big]_\ell
    =~& \Big[ 1 - \smfrac{\alpha}{A_F} \big( \phi_F'' + 2 \phi_{2F}''
    \big) \Big] u_\ell''
    + \Big[ - \smfrac{\alpha}{A_F} \phi_{2F}'' \Big] (u_{\ell-1}'' + u_{\ell+1}'' ) \\
    =~& \Big[ 1 - \alpha \big(1 - \smfrac{2\phi_{2F}''}{A_F}\big)\Big] u_\ell''
    + \Big[ - \alpha \smfrac{\phi_{2F}''}{A_F} \Big] (u_{\ell-1}'' + u_{\ell+1}'').
  \end{align*}
  In summary, we have obtained
  \begin{equation*}
    \big[-\Li \Gqcl u\big]_\ell =
    \begin{cases}
      [1-\alpha] u_\ell'',
      & \text{if } \ell \in \Cs, \\[1mm]
      \Big[ 1 - \alpha \big(1 - \smfrac{2\phi_{2F}''}{A_F} \big)\Big] u_\ell''
      + \Big[ - \alpha \smfrac{\phi_{2F}''}{A_F} \Big] (u_{\ell-1}'' + u_{\ell+1}'')
      & \text{if } \ell \in \As.
    \end{cases}
  \end{equation*}
  It is now easy to see that
  \begin{displaymath}
    \| \Gqcl \|_{L(\Us^{2,\infty},\ \Us^{2,\infty})}
    \leq \max\Big\{ \big|1 - \alpha\big|,
    \big|1 - \alpha \big( 1 - \smfrac{2\phi_{2F}''}{A_F}\big)\big|
    + \alpha \big|\smfrac{2\phi_{2F}''}{A_F}\big| \Big\}.
  \end{displaymath}
  As a matter of fact, in view of the estimate
  \begin{displaymath}
    \big|1 - \alpha \big( 1 - \smfrac{2\phi_{2F}''}{A_F}\big)\big|
    + \alpha \big|\smfrac{2\phi_{2F}''}{A_F}\big|
    \geq | 1 - \alpha| - \alpha \big|\smfrac{2\phi_{2F}''}{A_F}\big|
    + \alpha \big|\smfrac{2\phi_{2F}''}{A_F}\big|
    = |1-\alpha|,
  \end{displaymath}
  the upper bound can be reduced to
  \begin{equation}
    \label{eq:app_co:normM_upper_bound}
    \| \Gqcl \|_{L(\Us^{2,\infty},\  \Us^{2,\infty})}
    \leq \big|1 - \alpha \big( 1 - \smfrac{2\phi_{2F}''}{A_F}\big)\big|
    + \alpha \smfrac{2|\phi_{2F}''|}{A_F}.
  \end{equation}

  To show that the bound is attained, we construct a suitable test
  function. We define $u \in \Us$ via
  \begin{displaymath}
    u_{-1}''=u_{1}''={\rm sign}\Big[- \alpha \smfrac{2\phi_{2F}''}{A_F}\Big],
    \quad u_0'' = {\rm sign} \Big[ 1 - \alpha\big( 1 -  \smfrac{2\phi_{2F}''}{A_F}\big) \Big],
  \end{displaymath}
  (note that $0 \in \As$ for any $K \geq 0$) and the remaining values
  of $u_\ell''$ in such a way that $\sum_{\ell = -N+1}^N u_\ell'' =
  0$. If $N \geq 4,$ then there exists at least one function $u \in
  \Us$ with these properties and it attains the bound
  \eqref{eq:app_co:normM_upper_bound}. Thus, the bound in
  \eqref{eq:app_co:normM_upper_bound} is an equality, which concludes
  the proof of the lemma.
\end{proof}

Before we prove Lemma \ref{th:normM_U1inf}, we recall an explicit
representation of $\Li^{-1} \Lqcf$ that was useful in our analysis in
\cite{doblusort:qcf.stab}. The proof of the following result is completely analogous
to that of \cite[Lemma 14]{doblusort:qcf.stab} and is therefore sketched only
briefly. It is also convenient for the remainder of the section to
define the following atomistic and continuum regions for the strains:
\begin{displaymath}
  \As' = \big\{ -K+1, \dots, K \big\} \quad \text{and} \quad
  \Cs' = \big\{ -N+1, \dots, N \big\} \setminus \As'.
\end{displaymath}

\begin{lemma}
  \label{th:L1invLqcf_representation}
  Let $u \in \Us$ and $z = \Li^{-1} \Lqcf u$, then
  \begin{displaymath}
    z_\ell' = \sigma(u')_\ell - \overline{\sigma(u')}
    + \phi_{2F}'' \big( \tilde{\alpha}_{-K}(u') h_{-K, \ell}
    - \tilde{\alpha}_{K}(u') h_{K, \ell} \big),
  \end{displaymath}
  where $\sigma(u'),\ h_{\pm K} \in \R^{2N}$ and
  $\overline{\sigma(u')},\ \tilde{\alpha}_{\pm K}(u') \in \R$ are
  defined as follows:
  \begin{align*}
    \sigma(u')_\ell =~& \begin{cases}
      \phi_F'' u_\ell' + \phi_{2F}'' (u_{\ell-1}' + 2 u_\ell' + u_{\ell+1}'), &
      \quad \ell \in \As', \\
      (\phi_F'' + 4 \phi_{2F}'') u_\ell', & \quad \ell \in \Cs',
    \end{cases} \\
    \overline{\sigma(u')} =~& \frac{1}{2N} \sum_{\ell = -N+1}^N \sigma(u')_\ell
    = \smfrac{\eps}{2} \phi_{2F}'' \big[ u_{K+1}' - u_{K}' - u_{-K+1}' + u_{-K}' \big], \\
    \tilde{\alpha}_{-K}(u') =~& u_{-K+1}' - 2 u_{-K}' + u_{-K-1}', \quad
    \tilde{\alpha}_K(u') = u_{K+2}' - 2 u_{K+1}' + u_{K}', \quad \text{and} \\
    h_{\pm K, \ell} =~& \begin{cases}
      \smfrac12 (1 \mp \eps K), & \ell = -N+1, \dots, \pm K, \\
      \smfrac12 (-1 \mp \eps K), & \ell = \pm K+1, \dots, N.
    \end{cases}
  \end{align*}
\end{lemma}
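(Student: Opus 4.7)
The proof hinges on rewriting $\Lqcf$ in a discrete stress-divergence form. My first move is to introduce the piecewise-defined ``stress'' $\sigma(u')_\ell$ given in the statement ($\sigma^a$ on $\As'$, $\sigma^c = A_F u'$ on $\Cs'$) and verify that for any $\ell$ with both $\ell$ and $\ell+1$ lying on the same side of the interface, one has $(\Lqcf u)_\ell = \eps^{-1}\bigl(\sigma(u')_\ell - \sigma(u')_{\ell+1}\bigr)$. This is a direct check: for $\ell \in \Cs$ surrounded by $\Cs'$-strains it is just $\Lqcl = A_F\Li$ in divergence form, and for $\ell$ in the interior of $\As$ an algebraic manipulation using $u_{j+2}-u_j = \eps(u_{j+1}' + u_{j+2}')$ puts $\Lat u$ into exactly this form with $\sigma = \sigma^a$.

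The only $\ell$'s where this identity fails are $\ell = -K$ and $\ell = K$, the two indices in $\As$ whose outer neighbour lies in $\Cs$, so that $\sigma(u')_{-K}$ and $\sigma(u')_{K+1}$ get assigned the ``wrong'' ($c$-type) value. Here the second step is to compute the defect $\sigma^a - \sigma^c$ at those strains: using $A_F - \phi''_F = 4\phi''_{2F}$ one finds $\sigma^a_{K+1} - \sigma^c_{K+1} = \phi''_{2F}(u_K' - 2u_{K+1}' + u_{K+2}') = \phi''_{2F}\tilde\alpha_K(u')$, and symmetrically $\sigma^c_{-K} - \sigma^a_{-K} = -\phi''_{2F}\tilde\alpha_{-K}(u')$. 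Hence, writing $z = \Li^{-1}\Lqcf u$ so that $z_\ell' - z_{\ell+1}' = \eps(\Lqcf u)_\ell$, we obtain
\begin{equation*}
z_{\ell+1}' - z_\ell' = \sigma(u')_{\ell+1} - \sigma(u')_\ell
+ \phi''_{2F}\bigl[\tilde\alpha_{-K}(u')\,\delta_{\ell,-K}^{\text{jump}} - \tilde\alpha_K(u')\,\delta_{\ell,K}^{\text{jump}}\bigr]\cdot(-1),
\end{equation*}
where the ``jump'' factor is $-1$ at the two interface indices and $0$ elsewhere. This is exactly the discrete derivative of the piecewise-constant function $h_{\pm K,\ell}$ in the statement, so telescoping from $-N+1$ to $\ell$ gives $z_\ell' = \sigma(u')_\ell + \phi''_{2F}\bigl(\tilde\alpha_{-K}(u')h_{-K,\ell} - \tilde\alpha_K(u')h_{K,\ell}\bigr) + C$ for some constant $C$ (the specific constant split inside $h_{\pm K,\ell}$ is chosen so that $\sum_\ell h_{\pm K,\ell} = 0$, which is a short arithmetic verification using $N\eps = 1$).

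The constant $C$ is pinned down by $z \in \Us$, equivalently $\sum_{\ell = -N+1}^N z_\ell' = z_N - z_{-N} = 0$. Because $\sum_\ell h_{\pm K,\ell} = 0$ by construction, this forces $C = -\overline{\sigma(u')}$ with $\overline{\sigma(u')} = \tfrac{1}{2N}\sum_\ell \sigma(u')_\ell$, which is the first formula in the lemma. The explicit expression $\overline{\sigma(u')} = \tfrac{\eps}{2}\phi''_{2F}\bigl[u_{K+1}' - u_K' - u_{-K+1}' + u_{-K}'\bigr]$ then follows from $\sum_{\ell=-N+1}^N u_\ell' = 0$, $A_F = \phi''_F + 4\phi''_{2F}$, and the telescoping identity $u_{\ell-1}' + 2u_\ell' + u_{\ell+1}' = 4u_\ell' + \eps(u_\ell'' - u_{\ell-1}'')$ summed over $\ell \in \As'$.

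The main obstacle is purely bookkeeping: getting the signs of the interface corrections right, verifying that $h_{\pm K,\ell}$ as defined in the statement is both the correct discrete antiderivative of the jumps and has mean zero over $\{-N+1,\dots,N\}$. Once these two points are pinned down, the argument is a routine telescoping/discrete-integration-by-parts, and the corresponding calculation in \cite[Lemma~14]{doblusort:qcf.stab} can be followed essentially verbatim with only the Dirichlet boundary conditions replacing the periodic ones.
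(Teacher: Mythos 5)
Your proof is correct and takes a genuinely different route from the paper. The paper's proof works in the weak form: it quotes from \cite[Sec.~3]{doblusort:qcf.stab} the variational identity $\langle \Lqcf u, v\rangle = \langle \sigma(u'), v'\rangle + \phi_{2F}''\big[\tilde\alpha_{-K}(u')v_{-K} - \tilde\alpha_K(u')v_K\big]$, rewrites the point values as $v_{\pm K} = \langle h_{\pm K}, v'\rangle$ using $v_{\pm N}=0$ and $\sum_\ell v_\ell'=0$, and then reads off $z'$ from $\langle z',v'\rangle = \langle \Lqcf u,v\rangle$. You instead work pointwise in the strong form: put $\Lqcf u$ into discrete stress-divergence form $\eps^{-1}(\sigma_\ell - \sigma_{\ell+1})$, isolate the two interface defects via $A_F - \phi_F'' = 4\phi_{2F}''$, recognize $h_{\pm K}$ as the mean-zero discrete antiderivative of the resulting Kronecker corrections, and telescope, with the constant fixed by $\sum_\ell z_\ell' = 0$. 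Both are valid; the paper's route is shorter because it recycles an existing weak-form lemma, while yours is more elementary and self-contained and re-derives the interface mismatch from scratch.

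One detail to tidy: in your display for $z_{\ell+1}'-z_\ell'$, with $\delta^{\rm jump}_{\ell,\pm K}$ understood as the forward difference $h_{\pm K,\ell+1}-h_{\pm K,\ell}$ (equal to $-1$ at $\ell=\pm K$, $0$ otherwise), the identity should read
\begin{displaymath}
  z_{\ell+1}' - z_\ell' = \sigma(u')_{\ell+1} - \sigma(u')_\ell
  + \phi_{2F}''\big[\tilde\alpha_{-K}(u')\,\delta^{\rm jump}_{\ell,-K}
  - \tilde\alpha_K(u')\,\delta^{\rm jump}_{\ell,K}\big],
\end{displaymath}
\emph{without} the trailing $\cdot(-1)$: at $\ell=K$ the right-hand side must be $\sigma_{K+1}-\sigma_K + \phi_{2F}''\tilde\alpha_K(u')$, consistent with your correct computation of the defects, and the extra factor of $-1$ as written flips that sign. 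Since your prose description and the final telescoped formula are both correct, this is a typo rather than a conceptual error, but the intermediate display and the final identity should be made literally consistent.
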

\begin{proof}
  In the notation introduced above, the variational representation of
  $\Lqcf$ from \cite[Sec. 3]{doblusort:qcf.stab} reads
  \begin{displaymath}
    \< \Lqcf u, v \> = \< \sigma(u'), v' \>
    + \phi_{2F}'' \big[ \tilde{\alpha}_{-K}(u') v_{-K}
    - \tilde{\alpha}_{K}(u') v_K \big]
    \qquad \forall u, v \in \Us.
  \end{displaymath}
  Using the fact that $v_{\pm N} = 0$ and $\sum_\ell v_\ell' = 0$, it is
  easy to see that the discrete delta-functions appearing in this
  representation can be rewritten as
  \begin{displaymath}
    v_{\pm K} = \< h_{\pm K}, v' \>.
  \end{displaymath}
  Hence, we deduce that the function $z = \Li^{-1} \Lqcf$ is given by
  \begin{displaymath}
    \< z', v' \> = \< \Lqcf u, v\> = \big\< \sigma(u') + \phi_{2F}''[\tilde{\alpha}_{-K}(u') h_{-K} - \tilde{\alpha}_{K}(u') h_{K}], v' \>
    \quad \forall v \in \Us.
  \end{displaymath}
  In particular, it follows that
  \begin{displaymath}
    z' = \sigma(u') + \phi_{2F}''[\tilde{\alpha}_{-K}(u') h_{-K} - \tilde{\alpha}_{K}(u') h_{K}] + C,
  \end{displaymath}
  where $C$ is chosen so that $\sum_{\ell} z_\ell' = 0$. Since $h_{\pm
    K}$ are constructed so that $\sum_{\ell} h_{\pm K, \ell} = 0$, we
  only subtract the mean of $\sigma(u')$. Hence, $C = -
  \overline{\sigma(u')}$, for which the stated formula is quickly
  verified.
\end{proof}

\begin{proof}[Proof of Lemma \ref{th:normM_U1inf}]
  Let $u \in \Us$ with $\|u'\|_{\ell^\infty_\eps} \leq 1$.  Setting $z = \Gqcl
  u$, and employing Lemma \ref{th:L1invLqcf_representation}, we obtain
  \begin{align*}
    z_\ell' =~& u_\ell' - \smfrac{\alpha}{A_F} \Big[ \sigma_\ell(u') -
    \overline{\sigma(u')} + \phi_{2F}''(\tilde{\alpha}_{-K}(u') h_{-K,
      \ell} - \tilde{\alpha}_{K}(u') h_{K, \ell} )\Big] \\
    =~& \Big[ u_\ell' - \smfrac{\alpha}{A_F} \sigma_\ell(u') \Big]
    + \alpha \smfrac{\phi_{2F}''}{A_F} \Big[
    \smfrac{\eps}{2} (u_{K+1}' - u_{K}' - u_{-K+1}' + u_{-K}') \\
    & \hspace{5cm}
    - \tilde{\alpha}_{-K}(u') h_{-K, \ell}
    + \tilde{\alpha}_{K}(u') h_{K, \ell} \Big] \\
    :=~& R_\ell + S_\ell.
  \end{align*}
  We will estimate the terms $R_\ell$ and $S_\ell$ separately.

  To estimate the first term, we distinguish whether $\ell \in \Cs'$
  or $\ell \in \As'$. A quick computation shows that $R_\ell =
  (1-\alpha) u_\ell'$ for $\ell \in \Cs'$.  On the other hand, for
  $\ell \in \As'$ we have
  \begin{align*}
    R_\ell =~& \Big[ 1 - \smfrac{\alpha}{A_F} (\phi_F'' + 2 \phi_{2F}'') \Big] u_\ell'
    - \alpha\smfrac{\phi_{2F}''}{A_F} (u_{\ell-1}' + u_{\ell+1}') \\
    =~& \Big[ 1 - \alpha \big( 1 - \smfrac{2\phi_{2F}''}{A_F} \big) \Big] u_\ell' - \alpha \smfrac{\phi_{2F}''}{A_F} (u_{\ell-1}' + u_{\ell+1}')
    \qquad \forall \ell \in \As'.
  \end{align*}
  Since $\|u'\|_{\ell^\infty_\eps} \leq 1,$ we can thus obtain
  \begin{equation}
    \label{eq:upper_bnd_R}
    |R_\ell| \leq
    \begin{cases}
      |1-\alpha|, & \ell \in \Cs', \\
      \Big|1 - \alpha \big( 1 - \smfrac{2\phi_{2F}''}{A_F} \big)\Big| + \alpha \Big| \smfrac{2 \phi_{2F}''}{A_F} \Big|, & \ell \in \As'.
    \end{cases}
  \end{equation}
  As a matter of fact, these bounds can be attained for certain
  $\ell$, by choosing suitable test functions. For example, by
  choosing $u \in \Us$ with $u_N' = {\rm sign}(1-\alpha)$ we obtain
  $R_N = |1-\alpha|$, that is, $R_N$ attains the bound
  \eqref{eq:upper_bnd_R}. By choosing $u \in \Us$ such that
  \begin{displaymath}
    u_0' = u_2' = {\rm sign}\Big(-\smfrac{\phi_{2F}''}{A_F}\Big)=1
    \quad \text{and} \quad
    u_1' = {\rm sign}\Big( 1 - \alpha \big( 1 - \smfrac{2\phi_{2F}''}{A_F} \big) \Big),
  \end{displaymath}
  we obtain that $R_1$ attains the bound \eqref{eq:upper_bnd_R}.  In
  both cases one needs to choose the remaining free $u_\ell'$ so that
  $|u_\ell'| \leq 1$ and $\sum_{\ell} u_\ell' = 0$, which guarantees
  that such functions $u \in \Us$ really exist. This can be done under
  the conditions imposed on $N$ and $K$.

  To estimate $S_\ell,$ we note that this term depends only on a small
  number of strains around the interface. We can therefore expand it
  in terms of these strains and their coefficients and then maximize
  over all possible interface contributions. Thus, we rewrite $S_\ell$
  as follows:
  \begin{align*}
    S_\ell = \alpha \smfrac{\phi_{2F}''}{A_F} \Big\{
    ~& u_{-K-1}' [-h_{-K,\ell}]
    + u_{-K}' [2 h_{-K, \ell} + \smfrac{\eps}{2}]
    + u_{-K+1}' [-h_{-K,\ell}-\smfrac{\eps}{2}] \\
    & u_{K}' [h_{K,\ell} - \smfrac{\eps}{2}]
    + u_{K+1}' [-2h_{K,\ell} + \smfrac{\eps}{2}]
    + u_{K+2}' [h_{K,\ell}] \Big\}.
  \end{align*}
  This expression is maximized by taking $u_\ell'$ to be the sign of
  the respective coefficient (taking into account also the outer
  coefficient $\alpha \smfrac{\phi_{2F}''}{A_F}$), which yields
  \begin{align*}
    |S_\ell| \leq~& \alpha \big|\smfrac{\phi_{2F}''}{A_F} \big| \Big\{ |h_{-K,\ell}| + |2h_{-K,\ell} + \smfrac\eps2| + |h_{-K,\ell}
    + \smfrac\eps2| + |h_{K,\ell} - \smfrac\eps2| + |2h_{K,\ell} - \smfrac\eps2|
    + |h_{K,\ell}| \Big\} \\
    =~& \alpha \big|\smfrac{\phi_{2F}''}{A_F} \big| \Big\{ |4 h_{-K,\ell} + \eps| + |4 h_{K,\ell} - \eps| \Big\}.
  \end{align*}
  The equality of the first and second line holds because the terms
  $\pm \smfrac\eps2$ do not change the signs of the terms inside the
  bars. Inserting the values for $h_{\pm K, \ell},$ we obtain the bound
  \begin{equation*}
    |S_\ell| \leq
    \begin{cases}
      \alpha 4  \big|\smfrac{\phi_{2F}''}{A_F} \big|, & \ell \in \Cs', \\
      \alpha (4 + 2 \eps - 4 \eps K)  \big|\smfrac{\phi_{2F}''}{A_F} \big|, & \ell \in \As',
    \end{cases}
  \end{equation*}
  and we note that this bound is attained if the values for $u_\ell'$,
  $\ell = -K-1,-K, -K+1, K, K+1, K+2$, are chosen as described above.

  Combining the analyses of the terms $R_\ell$ and $S_\ell$, it
  follows that
  \begin{align*}
    \| z' \|_{\ell^\infty_\eps} \leq \max \Big\{&~ |1 - \alpha| + \alpha 4 \big|\smfrac{\phi_{2F}''}{A_F} \big|, \\
    &~ \big|1 - \alpha \big( 1 - \smfrac{2\phi_{2F}''}{A_F} \big)\big| + \alpha (6 + 2 \eps - 4 \eps K)  \big|\smfrac{\phi_{2F}''}{A_F}\big| \Big\}.
  \end{align*}
  To see that this bound is attained, we note that, under the
  condition that $K \geq 3$ and $N \geq K+3$, the constructions at the
  interface to maximize $S_\ell$ and the constructions to maximize
  $R_\ell$ do not interfere. Moreover, under the additional condition
  $N \geq \max(9, K+3)$, sufficiently many free strains $u_\ell'$
  remain to ensure that $\sum_{\ell} u_\ell' = 0$ for a test function
  $u \in \Us$, $\|u'\|_{\ell^\infty_\eps} = 1$, for which both
  $R_\ell$ and $S_\ell$ attain the stated bound. That is, we have
  shown that
  \begin{align*}
    \big\|\Gqcl\big\|_{\Us^{1,\infty}} = \max \Big\{& |1 - \alpha| + \alpha 4 \smfrac{|\phi_{2F}''|}{A_F}, \\
    & \big|1 - \alpha \big( 1 - \smfrac{2\phi_{2F}''}{A_F} \big)\big| + \alpha (6 + 2 \eps - 4 \eps K)  \smfrac{|\phi_{2F}''|}{A_F} \Big\} \\
    =: \max \{& m_\Cs(\alpha), m_\As(\alpha) \}.
  \end{align*}

  To conclude the proof, we need to evaluate this maximum explicitly.
  To this end we first define $\alpha_1 = ( 1 -
  \smfrac{2\phi_{2F}''}{A_F} \big)^{-1} < 1$. For $0 \leq \alpha \leq
  \alpha_1$, we have
  \begin{align*}
    m_\As(\alpha) =~& 1 - \alpha + \alpha(4 + 2\eps - 4\eps K) \big| \smfrac{\phi_{2F}''}{A_F}\big| \\
    \leq~&  1 - \alpha + \alpha 4 \big| \smfrac{\phi_{2F}''}{A_F}\big| = m_\Cs(\alpha),
  \end{align*}
  that is, $\|\Gqcl\|_{\Us^{1,\infty}} = m_\Cs(\alpha)$. Conversely, for
  $\alpha \geq 1$, we have
  \begin{align*}
    m_\As(\alpha) =~& \alpha \Big( 1 + (8+2\eps - 4 \eps K) \smfrac{|\phi_{2F}''|}{A_F} \Big) - 1 \\
    =~& m_\Cs(\alpha)+\alpha\Big( 4+2\eps - 4 \eps K) \smfrac{|\phi_{2F}''|}{A_F} \Big)
    \geq m_\Cs(\alpha),
  \end{align*}
  that is, $\|\Gqcl\|_{\Us^{1,\infty}} = m_\As(\alpha)$. Since, in
  $[\alpha_1, 1]$, $m_\Cs$ is strictly decreasing and $m_\As$ is
  strictly increasing, there exists a unique $\alpha_2 \in [\alpha_1,
  1]$ such that $m_\Cs(\alpha_2) = m_\As(\alpha_2)$ and such that the
  stated formula for $\|\Gqcl\|_{\Us^{1,\infty}}$ holds. A
  straightforward computation yields the value for $\alpha_2 =
  \alphaoptqclone$ stated in the lemma.
\end{proof}

\subsection{Computation of $\|\Gqce\|_{\Us^{k,p}}$}
\label{gnorm}
We have computed $\|\Gqce\|_{\Us^{k,p}}$ for $k=0, 2, p = 1, 2,
\infty$, from the standard formulas for the operator
norm~\cite{keller,saad} of the matrix $\Gqce$ and $\Li \Gqce \Li^{-1}$
with respect to $\ell_\eps^p$. For $k=1$ and $p = 2$, the norm is also
easy to obtain by solving a generalized eigenvalue problem.

The cases $k = 1$ and $p = 1, \infty$ are more difficult. In these
cases, the operator norm of $\Gqce$ in $\Us^{1,p}$ can be estimated in
terms of the $\ell_\eps^p$-operator norm of the conjugate operator
$\widehat G=I-(\Lqceconjf)^{-1}\Lqcfconjf: \R^{2N} \to \R^{2N}$ (see
Lemma~\ref{th:spec_Lqnl_U12} for an analogous definition of the
conjugate operator $\Lqnlconjf:\R^{2N}\to\R^{2N}$). It is not
difficult to see that $\|\Gqce\|_{\Us^{1,p}}=\|\widetilde
G\|_{\ell_\eps^p,\,\R^{2N}_*}$ for $\widetilde
G=I-(\Lqceconj)^{-1}\Lqcfconj:\R^{2N}_*\to\R^{2N}_*$ where we recall
that $\R^{2N}_* = \{ \varphi \in \R^{2N} : \sum_\ell \varphi_\ell = 0
\}$ (see Lemma~\ref{th:spec_Lqnl_U12} similarly for an analogous
definition of the restricted conjugate operator
$\Lqnlconj:\R^{2N}_*\to\R^{2N}_*$), it follows from \eqref{equiva}
that we have only computed $\|\Gqce\|_{\Us^{1,p}}$ for $p=1,\,\infty$
up to a factor of $1/2.$ More precisely,
\begin{displaymath}
  \|\Gqce\|_{\Us^{1,p}}
  \leq \|\widehat G\|_{\ell_\eps^p}
  \leq 2 \|\Gqce\|_{\Us^{1,p}}
\end{displaymath}

Finally We note that we can obtain $\Lqcfconjf$ from the
representation given in Lemma~\ref{th:L1invLqcf_representation} and
that $\Lqceconjf$ can be directly obtained from ~\eqref{qce}.

\end{document}